\numberwithin{figure}{section}
\numberwithin{table}{section}
\newtheorem{thmIntro}{Theorem}    
\newtheorem{theorem}{Theorem}[section]
\newtheorem{lemma}[theorem]{Lemma}
\newtheorem{prop}[theorem]{Proposition}
\newtheorem{conjecture}[theorem]{Conjecture}
\theoremstyle{definition}
\newtheorem{definition}[theorem]{Definition}
\newtheorem{notation}[theorem]{Notation}
\newtheorem{cor}[theorem]{Corollary}
\theoremstyle{remark}
\newtheorem{remark}[theorem]{Remark}
\numberwithin{equation}{section}
\newfont{\tap}{tap scaled 650}
\def \N{{\mathbb N}}
\def \Z{{\mathbb Z}}
\def \Q{{\mathbb Q}}
\def \[{[ }
\def \]{] }
\def\t{\widetilde}
\def \sgn{\mathrm{sgn }}
\def \gcd{\mathrm{gcd}}
\def \op{^\mathrm{op}}
\definecolor{dgreen}{rgb}{0,0.5,0}        
\definecolor{dred}{rgb}{0.5,0,0}
\begin{document}

\title{Mutation-finite quivers with real weights}

\author{Anna Felikson and Pavel Tumarkin}
\address{Department of Mathematical Sciences, Durham University, Mathematical Sciences \& Computer Science Building, Upper Mountjoy Campus, Stockton Road, Durham, DH1 3LE, UK}
\email{anna.felikson@durham.ac.uk, pavel.tumarkin@durham.ac.uk}
\thanks{Research of P.T. was supported in part by the Leverhulme Trust research grant RPG-2019-153.}



\begin{abstract}
We classify all mutation-finite quivers with real weights. We show that every finite mutation class not originating from an integer skew-symmetrizable matrix has a geometric realization by reflections. We also explore the structure of acyclic representatives in finite mutation classes and their relations to acute-angled simplicial domains in the corresponding reflection groups.   
\end{abstract}

\maketitle
\setcounter{tocdepth}{1}
\tableofcontents

\section{Introduction and main results}
Mutations of quivers were introduced by Fomin and Zelevinsky in~\cite{FZ1} in relation to cluster algebras. Mutations are involutive transformations decomposing the set of quivers into equivalence classes called {\em mutation classes}. Of special interest are quivers of {\em finite mutation type}, i.e. those whose mutation classes are finite, these quivers have shown up recently in various contexts.
Most of such quivers are adjacency quivers of triangulations of marked bordered surfaces~\cite{FG,GSV,FST,FT}, the complete classification of mutation-finite quivers was obtained in~\cite{FeSTu1}.

In this paper, we consider a more general notion of a quiver -- we allow arrows of quivers to have {\em real weights} (and we refer to ``usual'' quivers as to {\em integer quivers}). Quivers with real weights of arrows have been studied in~\cite{BBH}, where the Markov constant was used to explore the mutation classes of quivers of rank $3$. Quivers originating from non-crystallographic finite root systems were also considered in~\cite{L}. A categorification of mutations of quivers of non-crystallographic types $I_2(2n+1), H_3,H_4$ was constructed in~\cite{DT}. In~\cite{FT-rk3} we constructed a geometric model for mutations of rank $3$ quivers with real weights and classified all finite mutation classes. The main result of this paper is a classification of all finite-mutation quivers. More precisely, we prove the following theorem.

\begin{thmIntro}
\label{intro1}
For every  mutation-finite  non-integer quiver $Q$ of rank $r\ge 3$  
\begin{itemize}
\item[-]  either $Q$ arises from a triangulated orbifold; 
\item[-]  or $Q$ is mutation-equivalent to one of the $F$-type quivers shown in Fig.~\ref{4};
\item[-]  or $Q$ is mutation-equivalent to one of the $H$-type quivers  shown in Fig.~\ref{5};
\item[-]  or $Q$ is mutation-equivalent to a representative of one of the three series of quivers shown in Fig.~\ref{ser}.
\end{itemize}
\end{thmIntro}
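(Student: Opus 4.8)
The plan is to run the local-to-global strategy that settled the integer case in \cite{FeSTu1}, upgraded so that it tracks the irrational weights, using the rank~$3$ classification of \cite{FT-rk3} as the base case and the reflection realization promised in the abstract as the verifying tool. I may assume throughout that $Q$ is connected, since mutation acts componentwise and mutation-finiteness is a componentwise property. The two structural facts I would lean on are: (i) every full subquiver of a mutation-finite quiver is again mutation-finite, because mutation at a vertex of the subquiver is induced by mutation of the ambient quiver; and (ii) mutation within a rank~$2$ subquiver fixes its weight, while the rank~$3$ classification of \cite{FT-rk3} bounds the weights reachable inside any rank~$3$ subquiver, so that together these confine the irrational weights occurring anywhere in the mutation class of $Q$ to a short explicit list. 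The global argument would then be an induction on rank (equivalently, a minimal-counterexample argument) built on top of these constraints.

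First I would pin down the admissible non-integer weights. By (i) every rank~$3$ full subquiver of $Q$ lies on the list of \cite{FT-rk3}, and since $Q$ is non-integer at least one such subquiver carries an irrational weight. Reading off that list, the only irrational weights compatible with mutation-finiteness are the ``orbifold'' value $\sqrt2$ attached to order-$2$ points and the ``$H$'' value $2\cos(\pi/5)=\frac{1+\sqrt5}{2}$, together with the finitely many weights forced inside the exceptional $F$- and $H$-configurations. This dichotomy between orbifold-type weights and the remaining exceptional weights is exactly what splits the final statement into its four branches.

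Next I would run the case analysis on which irrational values occur. When every irrational weight is of orbifold type, I would introduce orbifold blocks (puzzle pieces) extending the surface blocks of \cite{FeSTu1} and show, by the same gluing argument, that $Q$ is block-decomposable; block-decomposability is precisely the statement that $Q$ arises from a triangulated orbifold. When an $H$- or $F$-weight is present I would argue locally instead: using (i) to control the rank~$3$ and rank~$4$ neighborhoods of the irrational arrow, I would show that such a weight cannot coexist with an unbounded block-decomposable part, which bounds the rank of any non-orbifold class and reduces the problem to a finite check (geometric or computer-assisted). The candidate classes surviving this check are precisely the $F$- and $H$-type quivers of Fig.~\ref{4} and Fig.~\ref{5} and the three series of Fig.~\ref{ser}; I would confirm that each is genuinely mutation-finite by exhibiting its geometric realization by reflections, which simultaneously identifies the acyclic representatives with acute-angled simplicial domains.

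The hard part will be the completeness of the exceptional list, i.e.\ proving that no mutation-finite non-orbifold class is missed. This needs two delicate ingredients: an effective rank bound showing that the $F$- and $H$-type phenomena survive only in bounded rank, except along the three explicit series, and a verification that the local (rank~$\le 4$) constraints are strong enough to force global membership in the list. I expect the rank bound to be the genuine obstacle, since an irrational weight interacts with the integer part of $Q$ in ways that do not arise in \cite{FeSTu1}. The reflection realization, which converts mutation-finiteness into discreteness of a reflection group acting on a simplicial cone, is the tool I would use to make this bound both provable and conceptually transparent.
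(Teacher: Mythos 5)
Your proposal has the right outer skeleton (heredity of mutation-finiteness under passing to full subquivers, the rank~$3$ classification as base, a rank bound plus a finite check), but it contains a genuine gap at the very first quantitative step: the list of admissible irrational weights. Theorem~\ref{finite-thm} does \emph{not} confine irrational weights to $\sqrt2$ and $2\cos(\pi/5)$; its item (2) is the infinite family $(2,2\cos\frac{\pi}{d},2\cos\frac{\pi}{d})$ with $d\in\Z_+$ arbitrary, so weights $2\cos\frac{\pi m}{d}$ with arbitrarily large denominator $d$ do occur in mutation-finite quivers, and they persist in rank~$4$: the three series of Fig.~\ref{ser} carry weights with denominators $2n$ and $2n+1$ for every $n$. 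Consequently your dichotomy (``orbifold value $\sqrt2$'' versus ``$H$/$F$ values'') does not ``split the statement into its four branches'': the fourth branch falls into neither of your cases, and your case analysis never produces it. The bulk of the paper's proof is precisely about these high denominators, which your plan has no counterpart for: Lemma~\ref{thm: ser} (a rank~$4$ mutation-finite quiver with denominator $d>5$ must be one of the three series), Theorem~\ref{no big} (no rank~$5$ mutation-finite quiver has an arrow of denominator $d>5$), Proposition~\ref{separate 4 5} (denominators $4$ and $5$ cannot coexist), and the Section~\ref{4high} induction showing the series are in fact mutation-finite.

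A second, independent problem is your proposed verification tool. A geometric realization by reflections does not certify mutation-finiteness: realizations by partial reflections exist for mutation-infinite quivers as well (already in rank~$3$, the hyperbolic cases of~\cite{FT-rk3} are realized by reflections), and for the affine and extended affine classes the associated reflection group is infinite, so no finiteness of the mutation class can be read off from discreteness of the group. The paper accordingly proves mutation-finiteness of the three series by a purely combinatorial induction (every quiver in the class fits the ``standard form'' of Fig.~\ref{standard} subject to the integer constraints (1)--(4) of Table~\ref{conditions}, a finite set for each $n$), and of the $H$- and $F$-type classes by direct enumeration of the finite class; the realization by reflections is Theorem~\ref{intro2}, established \emph{after} and independently of Theorem~\ref{intro1}, not an ingredient of it. Your orbifold branch (redoing block decomposition with orbifold blocks) is also heavier than needed: the paper disposes of the denominator-$4$ case by observing that a chordless cycle with an odd number of $\sqrt2$-arrows is mutation-infinite, whence every mutation-finite denominator-$4$ class is a skew-symmetrization of an integer skew-symmetrizable class already classified in~\cite{FeSTu2}.
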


We list all non-orbifold mutation-finite classes and their sizes in Tables~\ref{table mut-fin} and~\ref{table sizes mut-fin} respectively. Notice that the list above includes the appropriate {\em rescalings} (see~\cite[Section~7]{R})  of all mutation-finite diagrams from~\cite{FeSTu2}:  quivers arising from triangulated orbifolds~\cite{FeSTu3} and $F$-type quivers are explicitly mentioned in Theorem~\ref{intro1}, and $G_2$-type quivers obtained from the diagrams $G_2^{(*,+)}$ and   $G_2^{(*,*)}$ belong to the series mentioned in the last line of Theorem~\ref{intro1}.


Our proof of Theorem~\ref{intro1} is based on the classification of mutation-finite rank $3$ quivers~\cite{FT-rk3} and the related geometry. In particular, all the weights of arrows of mutation-finite quivers should be of the form $2\cos({q\pi/d})$ for some integer $q$ and $d$, and every rank $3$ subquiver has to correspond to some spherical or Euclidean triangle (we recall the details in Section~\ref{3}). We first show that all mutation-finite quivers of sufficiently high rank originate from orbifolds, and then treat quivers in low ranks where we obtain three exceptional infinite families.

Next, we construct a geometric realization for every finite mutation class of quivers except for mutation-cyclic ones originating from orbifolds (we conjecture though that mutation classes originating from unpunctured orbifolds also have geometric realizations by reflections). The realization by reflections provides a quadratic form associated to a mutation class, and thus we can define quivers of {\em finite type} when the corresponding quadratic form is positive definite. As in the integer case (see~\cite{FZ2}), these correspond precisely to finite reflection groups. We say that a quiver has {\em affine} or {\em extended affine} type if its mutation class is realized in a semi-positive quadratic space of corank $1$ (at least $2$, respectively). The result can be then formulated in the following theorem.

\begin{thmIntro}
\label{intro2}
Every non-integer finite mutation class (except for the quivers originating from orbifolds) has a geometric realization by reflections. In particular,  
\begin{itemize}
\item[-] quivers in the top row of Table~\ref{table mut-fin} are of finite type;
\item[-] quivers in the middle row of  Table~\ref{table mut-fin} are of affine type; 
\item[-] quivers in the bottom row of  Table~\ref{table mut-fin} are of extended affine type.
\end{itemize}
\end{thmIntro}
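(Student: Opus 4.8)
The plan is to place every non-orbifold mutation class inside a single real quadratic space and to read off its type from the signature of the ambient symmetric bilinear form. To a quiver $Q$ of rank $r$ with weights $w_{ij}=2\cos(q_{ij}\pi/d)$ I attach the Gram matrix $G_Q$ with diagonal entries $1$ and off-diagonal entries $-\tfrac12|w_{ij}|$, and I realize it by a \emph{frame} $\alpha_1,\dots,\alpha_r$ in a real space $V$ equipped with the bilinear form $(\cdot,\cdot)$ having Gram matrix $G_Q$; the $\alpha_i$ play the role of unit normals to mirrors meeting at angle $q_{ij}\pi/d$, and the reflections $s_i(x)=x-2(x,\alpha_i)\,\alpha_i$ are isometries of $(\cdot,\cdot)$ generating the reflection group that realizes the class. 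By Theorem~\ref{intro1} every rank $3$ subquiver is spherical or Euclidean, so by the rank $3$ model of~\cite{FT-rk3} each rank $3$ block of $G_Q$ is positive (semi)definite, which is what makes the angles, normals and reflections well defined to begin with.

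The key step is to show that a mutation $\mu_k$ is realized by a change of frame within the \emph{fixed} space $V$. I would define the new frame by the partial-reflection rule $\alpha_k\mapsto -\alpha_k$ and $\alpha_i\mapsto \alpha_i+[\varepsilon_{ki}]_+\,\alpha_k$ for $i\neq k$, the signs being read off the arrows at $k$, and check two things. First, this rule is a \emph{unimodular} linear map $T_k$ of $V$, so all frames span the same space and the Gram matrices of adjacent frames satisfy $G_{\mu_k(Q)}=T_k^{\,t}\,G_Q\,T_k$; being congruent, they have the same signature and corank. Second, the mutated frame genuinely realizes $G_{\mu_k(Q)}$, i.e. the new inner products reproduce the weights of $\mu_k(Q)$; this reduces to a computation inside each rank $3$ subquiver through $k$, where it is exactly the statement from~\cite{FT-rk3} that the rank $3$ partial reflection carries spherical/Euclidean triangles to spherical/Euclidean triangles with the mutated weights. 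Propagating these local identities gives the global realization, and the congruence invariance makes ``finite'', ``affine'' and ``extended affine'' well defined on the entire class, so it remains only to evaluate the form on one convenient representative.

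For the last step I would pass, in each non-orbifold class of Theorem~\ref{intro1}, to an acyclic representative, for which $G_Q$ is literally the Coxeter matrix of a (possibly non-crystallographic) reflection system. For the $F$- and $H$-type classes one checks that $G_Q$ is positive definite — the reflection groups are the finite Coxeter groups $F_4$, $H_3$, $H_4$ and their rank $3$ relatives — so $V$ is spherical and, exactly as in the integer case~\cite{FZ2}, the class is of finite type (a positive-definite realization being equivalent to finiteness of the mutation class, i.e. to finiteness of the reflection group). For the middle row I would verify that $G_Q$ is positive semidefinite of corank exactly $1$, placing the class in Euclidean space and giving affine type; for the three series in the bottom row I would compute the radical of $G_Q$ on a series representative and show it is positive semidefinite of corank at least $2$, giving extended affine type.

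The main obstacle is the global consistency of the second step for classes that contain genuinely cyclic quivers: one has to verify that the locally defined partial reflections assemble coherently and that no inadmissible weight — one not of the form $2\cos(q\pi/d)$, or one violating the spherical/Euclidean constraint — ever appears along the class, which forces the realization to stay inside $V$. This is precisely where the rank $3$ dichotomy of~\cite{FT-rk3} is used, but it must be propagated coherently through ranks $\ge 4$. A secondary, still delicate, computation is pinning down the exact corank of the semidefinite form for the three extended-affine series.
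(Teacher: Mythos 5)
Your overall skeleton (frames of vectors, partial reflections, congruence-invariance of the signature, reading off the corank on a representative) is the same as the paper's, but there are two genuine gaps at exactly the points where the paper has to work hardest. The first is the propagation step. You claim that verifying that the mutated frame realizes $\mu_k(Q)$ ``reduces to a computation inside each rank $3$ subquiver through $k$.'' That local check is not a self-propagating invariant: to guarantee that the realization survives the \emph{next} mutation one needs the stronger condition of \emph{admissibility} in the sense of Seven --- a parity condition on the signs of inner products along \emph{all} chordless cycles, not just triangles --- and proving that admissibility is preserved under mutation is the actual content of the paper's proof (via~\cite[Proposition 3.2]{BGZ} plus sign bookkeeping). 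This is not a formality: Remark~\ref{rem x5} exhibits a mutation-finite quiver (of orbifold type) admitting \emph{no} admissible realization, so the realization genuinely fails to propagate for some mutation-finite classes, and any argument that does not isolate and propagate admissibility cannot tell these situations apart. Concretely, the paper closes the induction (a) for the three infinite rank~$4$ series by an explicit sign-tracking induction on the standard forms, with separate treatment of quivers with vanishing arrows and of chordless $4$-cycles (Lemmas~\ref{empty arrow} and~\ref{series-geom}), and (b) for all remaining classes, which are finitely many of bounded size, by a computer verification that the pair (Gram matrix; exchange matrix) takes finitely many consistent values. You flag this as ``the main obstacle'' but supply no mechanism; since the infinite series cannot be settled by a finite computation, this is the missing core of the proof rather than a detail.

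The second gap is your initial frame. Prescribing all off-diagonal Gram entries to be $-\tfrac12|w_{ij}|$ yields a valid realization only for acyclic quivers: in an oriented triangle the number of \emph{positive} inner products must be odd. Your fallback of ``passing to an acyclic representative'' is unavailable precisely for the bottom row of Table~\ref{table mut-fin}: those classes carry double arrows and contain no acyclic quivers, which is why Lemma~\ref{initial} has a third construction (delete one endpoint of the double arrow to get an acyclic quiver, then adjoin a vector $v_n$ with $(v_n,v_i)=(v_1,v_i)$). Without it, your computation of corank $2$ for the extended affine classes has no starting point. A minor further slip: a positive-definite realization corresponds to \emph{finite type} (finitely many $Y$-seeds, equivalently a finite reflection group, Remark~\ref{seeds}), not to finiteness of the mutation class --- every class in the theorem is mutation-finite.
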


\begin{center}
\begin{table}
\caption{Mutation-finite non-integer non-orbifold type quivers}
\label{table mut-fin}
\epsfig{file=./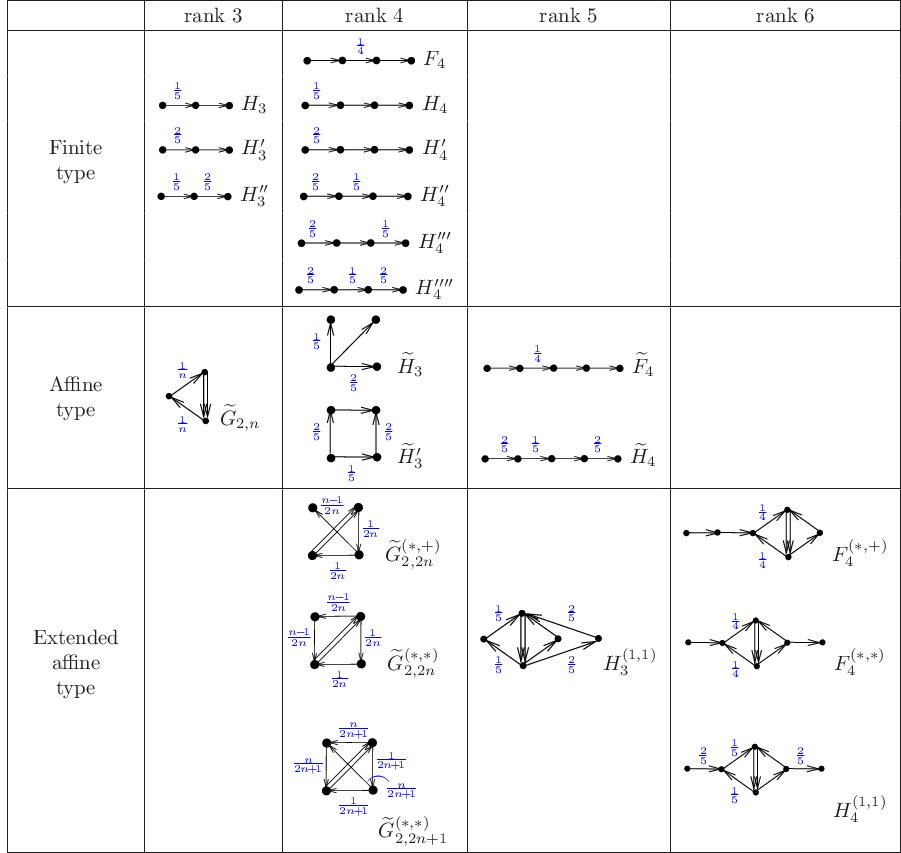,width=0.99\linewidth}
\end{table}  
\end{center}

\bigskip

\begin{center}
\begin{table}
  \caption{Sizes of mutation classes of type $H$ and type $F$ quivers}
\label{table sizes mut-fin}
\epsfig{file=./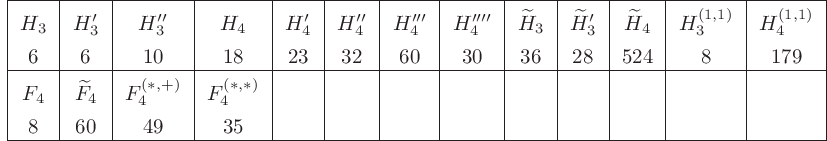,width=0.86\linewidth}
\end{table}  
\end{center}

The paper is organized as follows. In Section~\ref{3} we recall some details from~\cite{FT-rk3} on the classification of mutation-finite quivers in rank $3$ which will be our main tool. In Section~\ref{sec-high} we show that in rank greater than $4$  the denominator $d$ in the weight $2\cos({q\pi/d})$ of an arrow of a mutation-finite quiver is bounded by $5$. Thus, we restrict our considerations to quivers with weights of arrows belonging to $\Z[\sqrt{2}]$ and  $\Z[(1+\sqrt{5})/2]$ which we consider in Section~\ref{low denominator}, and to quivers of rank $4$ considered in Section~\ref{4high}. In Section~\ref{sec real} we show that mutations in finite mutation classes can be modelled by partial reflections in positive semi-definite quadratic spaces. Finally, in Section~\ref{sec acyclic} we explore the relations between acyclic representatives in finite mutation classes and acute-angled simplices bounded by mirrors of reflections.


\section{Classification in rank $3$}
\label{3}
In this section we recall the results of~\cite{FT-rk3} this paper is based on, and deduce some straightforward corollaries we will use throughout the text. We start with reminding the notation we used in~\cite{FT-rk3} and introducing some new one.

\begin{notation}
\begin{itemize} 
\item[]
\item Given a quiver $Q$ with vertices $1,\dots,n$, and a subset $I\subset \{1,\dots,n\}$, denote by $Q_I$ the subquiver of $Q$ spanned by vertices $\{ i\in I\}$. In particular, the vertex labeled $i$ will be denoted $Q_i$. 
  For example, $Q_{124}$ will denote a subquiver spanned by vertices $Q_1,Q_2,Q_4$.
\item While drawing quivers we will use the following conventions:
 \begin{itemize}
  \item given an arrow $Q_{ij}$ of weight $2\cos \frac{\pi m}{d}$, we will label this arrow  by $\frac{m}{d}$; we will also say that $Q_{ij}$ is an arrow {\it marked  $\frac{m}{d}$}; 
  \item arrows of weight $1$ will be left unlabeled;
  \item we draw double arrows instead of arrows of weight $2$.      
 \end{itemize}  
\item We say that an arrow $Q_{ij}$ {\em vanishes} if $Q_i$ and $Q_j$ are not joined in $Q$.
\item By $(a,b,c)$, $a,b,c>0$ we denote a rank $3$ cyclic quiver with arrows of weights $a,b,c$.
\item A rank $3$ acyclic quiver with arrows of weights $a,b$ looking in one direction and an arrow of weight $c$ in the opposite direction will be denoted by $(a,b,-c)$, where some weights may equal $0$.  
\item Given a quiver $Q$, we will denote by $Q\op$ the quiver obtained from $Q$ by reversing all arrows. $Q\op$ is also called a quiver {\em opposite} to $Q$. 
\end{itemize}
\end{notation}

\begin{theorem}[\cite{FT-rk3}, Theorem 6.11]
\label{finite-thm}
Let $Q$ be a connected rank $3$ quiver with real weights. Then $Q$ is of finite mutation type if and only if it is mutation-equivalent
to one of the following quivers:
\begin{itemize}
\item[(1)] $(2,2,2)$;
\item[(2)]  $(2,2\cos\frac{\pi}{d},2\cos\frac{\pi}{d})$,  $d\in \Z_+$;
\item[(3)] $(1,1,0)$, $(1,\sqrt 2,0)$, $(1,2\cos\frac{\pi}{5},0)$, $(2\cos\frac{\pi}{5},2\cos \frac{2\pi}{5},0)$, $(1,2\cos \frac{2\pi}{5},0)$.
\end{itemize}

\end{theorem}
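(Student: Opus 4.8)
The plan is to read off the mutation dynamics of a rank $3$ quiver from a single mutation invariant, the Markov constant, and then to interpret that invariant geometrically as the signature of an associated triangle. For a cyclic quiver $(a,b,c)$ set $\mathcal{C}(Q)=a^2+b^2+c^2-abc$. A short mutation computation shows that a mutation keeping the quiver cyclic acts on the weights by the Vieta jump $c\mapsto ab-c$ (and its relabellings) and leaves $\mathcal{C}$ unchanged, while a matching signed quantity is preserved when a mutation makes the quiver acyclic. Realizing $Q$ by three vectors $v_1,v_2,v_3$ of norm $1$ whose Gram matrix $G$ has off-diagonal entries $\pm\tfrac12$ times the weights (signs dictated by the orientation), one computes $\mathcal{C}(Q)=4(1-\det G)$. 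Hence $\mathcal{C}$ detects the signature of $G$: the cases $\mathcal{C}<4$, $\mathcal{C}=4$, $\mathcal{C}>4$ correspond respectively to a positive definite (``spherical''), positive semidefinite of corank $1$ (``Euclidean''), and indefinite (``hyperbolic'') form, which is exactly the trichotomy alluded to in the introduction.

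Next I would dispose of the hyperbolic case. If $\mathcal{C}>4$, then whether or not $Q$ is mutation-cyclic the form $G$ is indefinite, so the reflections in the $v_i$ generate an infinite group and the vectors leave every bounded region; concretely, along a suitable branch the Vieta recursion $c\mapsto ab-c$ makes the largest weight grow without bound. Thus the mutation class is infinite, no quiver with $\mathcal{C}>4$ is of finite mutation type, and we are reduced to $\mathcal{C}\le 4$.

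The heart of the argument, and the step I expect to be the main obstacle, is to show that if $Q$ is mutation-finite then every weight has the form $2\cos(q\pi/d)$. In the spherical and Euclidean cases two vectors $v_i,v_j$ make an angle $\psi_{ij}$ with $2\cos\psi_{ij}$ equal (up to sign) to the weight on that edge, and the product of the reflections $s_i,s_j$ is a rotation by $2\psi_{ij}$. If some $\psi_{ij}/\pi$ were irrational, this rotation would have infinite order and its powers would carry the mirrors to infinitely many distinct configurations realized inside the mutation class, contradicting finiteness. The delicate point is to verify that these configurations correspond to genuinely distinct quivers rather than collapsing under the quiver-to-Gram-matrix correspondence, and to run the argument uniformly over all orientations and over both the definite and the semidefinite subcase. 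Once rationality is established, each weight lies in the countable set $\{2\cos(q\pi/d)\}$, and since $\det G\ge 0$ forces each weight to be at most $2$, only finitely many triples remain to be examined for each value of $\mathcal{C}$.

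It remains to enumerate. In the Euclidean case $\mathcal{C}=4$ the relation $a^2+b^2+c^2=abc$ together with $0\le a,b,c\le 2$ and the rationality from the previous step isolates the Markov-type quiver $(2,2,2)$ and the one-parameter family $(2,2\cos\frac{\pi}{d},2\cos\frac{\pi}{d})$; computing their orbits under $c\mapsto ab-c$ confirms that each class is finite. In the spherical case $\mathcal{C}<4$ one first checks that $Q$ is mutation-acyclic, so it has a representative $(a,b,0)$, which must be the diagram of a rank $3$ finite reflection group; listing the admissible weight pairs with $\det G>0$ produces $A_3=(1,1,0)$, $B_3=(1,\sqrt2,0)$ and $H_3=(1,2\cos\frac{\pi}{5},0)$, together with the further representatives $(2\cos\frac{\pi}{5},2\cos\frac{2\pi}{5},0)$ and $(1,2\cos\frac{2\pi}{5},0)$ that occur because $2\cos\frac{2\pi}{5}$ is itself an admissible weight. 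Finally I would check that these five acyclic quivers and the cyclic $(2,2,2)$ lie in pairwise distinct mutation classes, which completes the classification.
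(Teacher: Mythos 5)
There is a genuine gap at the foundation of your argument, namely the claim that the Markov constant detects the signature of the Gram matrix. The identity $\mathcal{C}(Q)=4(1-\det G)$ is correct, but for a $3\times 3$ form the condition $\det G>0$ does \emph{not} imply positive definiteness: the signature also depends on the $2\times 2$ principal minors, i.e.\ on whether all weights are at most $2$. Concretely, the cyclic quiver $(3,3,3)$ has $\mathcal{C}=27-27=0<4$ and $\det G=1$, yet $G$ is indefinite (the minor $1-(3/2)^2$ is negative); this quiver is mutation-cyclic and mutation-infinite, since mutations produce $(3,3,6)$, $(3,6,15)$, \dots\ with unboundedly growing weights. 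Under your trichotomy it would be declared ``spherical'', and every subsequent step then fails for it: it is not mutation-acyclic, there are no angles $\psi_{ij}$ (one would need $|\cos\psi_{ij}|=3/2$), and your assertion that ``$\det G\ge 0$ forces each weight to be at most $2$'' is false. Similarly, $(2,3,3)$ has $\mathcal{C}=4$ but is degenerate indefinite rather than Euclidean, and is mutation-infinite. So your reduction to $\mathcal{C}\le 4$ does not reduce to the positive semidefinite case: the mutation-cyclic quivers with $\mathcal{C}\le 4$ but a weight exceeding $2$ are precisely the family needing a separate divergence argument (this is the content of the analysis of \cite{BBH} in the integer case and of the corresponding part of \cite{FT-rk3}), and that argument is absent from your proof.

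For comparison, the paper does not reprove this statement at all --- it quotes it as Theorem 6.11 of \cite{FT-rk3}. In that reference the spherical/Euclidean/hyperbolic trichotomy is governed by the actual signature of the quadratic form of a geometric realization (equivalently, by the type of the associated triangle, which presupposes all weights of the form $2\cos(\pi m/d)\le 2$), not by the Markov constant alone, and the mutation-cyclic classes are treated separately. Your remaining ingredients (rationality of angles via infinite orbits, then enumeration) are in the same spirit as \cite{FT-rk3}, but note two further defects: the ``delicate point'' you flag in the rationality step is genuinely unresolved as written (it is cleaner to argue directly that the weights $2|\cos k\theta|$, $k\in\Z$, take infinitely many values when $\theta/\pi\notin\Q$), and your Euclidean relation should read $a^2+b^2+c^2-abc=4$, not $a^2+b^2+c^2=abc$ --- the latter is $\mathcal{C}=0$ and is satisfied by the mutation-infinite quiver $(3,3,3)$, while the Markov quiver $(2,2,2)$ satisfies only the former.
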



Below we list some corollaries of Thm.~\ref{finite-thm} and related geometric constructions proved in~\cite{FT-rk3}. 

\begin{cor}
\label{cor3}
Let $Q$ be a connected mutation-finite rank $3$ quiver with real weights. Then
\begin{itemize}
\item[(1)] All weights of $Q$ are  of the form $2\cos\frac{\pi m}{d}$ with $m,d\in \Z$, $m\le d/2$.
\item[(2)] If $Q$ contains an arrow marked $\frac{m}{d}$ with $d>5$, $\gcd(m,d)=1$, then $Q$ is mutation-equivalent to  
 $(2,2\cos\frac{\pi}{d},2\cos\frac{\pi}{d})$.
\item[(3)]  If $Q$ contains an edge of weight $2$ then $Q$ is a cyclic quiver which coincides with either $(2,2,2)$ or  
  $(2,2\cos\frac{\pi m}{d},2\cos\frac{\pi m}{d})$, $d,m,\in \N$, $0<m\le d/2$.
\item[(4)] If $Q=(a,b,-c)$, $a,b,c>0$ is an acyclic quiver,  then 
\begin{itemize}
\item[-] 
$Q=(2\cos\frac{\pi m}{d},2\cos\frac{\pi s}{d}, -2\cos\frac{\pi t}{d})$
 for some  $d,m,s,t\in \N$, $m,s,t\le d/2$ such that $\frac{ m}{d}+\frac{ s}{d}+\frac{ t}{d}\ge 1$; 
\item[-] 
if in addition at least one of $a,b,c$ equals $2\cos\frac{\pi m}{d}$ with $d>5$, $\gcd(m,d)=1$,  \\ then 
 $\frac{ m}{d}+\frac{ s}{d}+\frac{ t}{d}=1$.
\end{itemize}

\item[(5)]  If $Q=(a,b,c)$, $a,b,c>0$ is a cyclic quiver then  
\begin{itemize}
\item[-]
$Q=(2\cos\frac{\pi m}{d},2\cos\frac{\pi s}{d}, 2\cos\frac{\pi t}{d})$
 for some  $d,m,s,t\in \N$, $m,s,t\le d/2$
such that $\frac{ m}{d}+\frac{ s}{d}+(1-\frac{ t}{d})\ge 1$ (up to permutation of $m,s,t$); 
 \item[-] 
if in addition at least one of $a,b,c$ equals to  $2\cos\frac{\pi m}{d}$ with $d>5$, $\gcd(m,d)=1$, \\ then 
 $\frac{ m}{d}+\frac{ s}{d}+(1-\frac{ t}{d})=1$.
\end{itemize}


\end{itemize}

\end{cor}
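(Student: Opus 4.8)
The plan is to deduce all five items from Theorem~\ref{finite-thm} together with the rank~$3$ geometric model of~\cite{FT-rk3}. First I would recall from that model that a connected mutation-finite rank~$3$ quiver $Q$ with weights $w_{12},w_{13},w_{23}$ is realized by a triple of vectors $e_1,e_2,e_3$ in a $3$-dimensional quadratic space with Gram matrix $G$, where $G_{ii}=1$ and $G_{ij}=\varepsilon_{ij}\,w_{ij}/2$ with $\varepsilon_{ij}=\pm1$ recording the orientation of $Q_{ij}$; mutation acts on this triple by a partial reflection, which is an isometry, so every quiver in the class of $Q$ is realized in the same space. Finiteness of the mutation type is equivalent to $G$ being positive definite (the \emph{spherical} case) or positive semidefinite of corank~$1$ (the \emph{Euclidean} case), i.e. to $\det G\ge 0$. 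Writing each weight as $2\cos\theta_{ij}$, the sign product $\varepsilon_{12}\varepsilon_{13}\varepsilon_{23}$ equals $+1$ when $Q$ is cyclic and $-1$ when $Q$ is acyclic.

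For item~(1) and the normalization $m\le d/2$: since every weight lies in $[0,2]$, I can write it uniquely as $2\cos\frac{\pi m}{d}$ with $\frac{\pi m}{d}\in[0,\pi/2]$, which forces $m\le d/2$; it remains to see $m/d$ is rational. Here Theorem~\ref{finite-thm} enters: $Q$ is mutation-equivalent to one of the listed quivers, whose weights are $2\cos\frac{\pi m}{d}$ with $d\in\{2,3,4,5\}$ (the spherical representatives of type $A_3,B_3,H_3$) or together with a weight~$2$ (the two Euclidean families (1),(2)). Because the whole class is realized by vectors drawn from the fixed mirror arrangement of the corresponding finite or affine reflection group, all pairwise angles, hence all weights throughout the class, are again of this form. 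Item~(2) is then the statement that a coprime denominator $d>5$ cannot occur in a spherical class: the connected rank~$3$ spherical groups are those of $A_3,B_3,H_3$, whose mirror angles have denominators $2,3,4,5$ only. So an arrow marked $\frac{m}{d}$ with $d>5$, $\gcd(m,d)=1$ forces the Euclidean case, and by Theorem~\ref{finite-thm}(2) the only Euclidean class carrying such a denominator is that of $(2,2\cos\frac\pi d,2\cos\frac\pi d)$. For~(3), a weight-$2$ arrow means $\theta_{ij}=0$, i.e. two mirrors are parallel, which is impossible in the spherical case; hence $Q$ lies in a Euclidean class and by Theorem~\ref{finite-thm} is $(2,2,2)$ or family~(2). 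Since a transversal to two parallel mirrors meets them at equal angles, the two remaining weights coincide, and tracking orientations shows the quiver is cyclic, giving $(2,2\cos\frac{\pi m}{d},2\cos\frac{\pi m}{d})$.

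Finally, items~(4) and~(5) are the determinant condition made explicit. Expanding
\[
\det G=1-\cos^2\theta_{12}-\cos^2\theta_{13}-\cos^2\theta_{23}+2\,\varepsilon_{12}\varepsilon_{13}\varepsilon_{23}\cos\theta_{12}\cos\theta_{13}\cos\theta_{23},
\]
and using the triangle identity $\cos^2A+\cos^2B+\cos^2C+2\cos A\cos B\cos C=1$, valid whenever $A+B+C=\pi$, one checks for the acyclic sign pattern (product $-1$) that $\det G\ge0$ is equivalent to $\theta_{12}+\theta_{13}+\theta_{23}\ge\pi$, i.e. to $\frac md+\frac sd+\frac td\ge1$, which is~(4). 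For the cyclic pattern (product $+1$) the same computation, after replacing the angle of one arrow by its supplement $\pi-\theta$, yields (up to relabeling) $\frac md+\frac sd+(1-\frac td)\ge1$, which is~(5). In both cases equality corresponds exactly to $\det G=0$, the Euclidean case, and combining this with~(2) gives the ``equality'' second bullets, since a coprime $d>5$ already forces the Euclidean class. The main obstacle I expect is bookkeeping the signs $\varepsilon_{ij}$ consistently, so that ``cyclic/acyclic'' matches the sign product and the correct angle gets supplemented; once the dictionary between arrow orientation and the signs of the Gram entries is pinned down, each of (1)--(5) becomes a short deduction.
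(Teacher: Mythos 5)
Your route is the same as the paper's: the paper gives no self-contained proof of Corollary~\ref{cor3}, deriving it from Theorem~\ref{finite-thm} together with the geometric model of~\cite{FT-rk3} (realization of rank~$3$ quivers by spherical/Euclidean triangles, mutations acting by partial reflections), and your Gram-matrix and determinant computation is exactly that model made explicit. However, the fact you ``recall'' as the foundation is misstated, and as stated it is false: mutation-finiteness is \emph{not} equivalent to $\det G\ge 0$. In one direction, the Markov quiver $(2,2,2)$ is mutation-finite but its Gram matrix has corank~$2$, not~$1$. In the other direction, the converse fails outright: the acyclic quiver $(2\cos\frac{3\pi}{10},2\cos\frac{3\pi}{10},0)$ has a positive definite Gram matrix and rational angles ($\frac{3}{10}+\frac{3}{10}+\frac{1}{2}>1$), yet it is mutation-infinite --- its weight $2\cos\frac{3\pi}{10}$ occurs in none of the finite-type classes of Theorem~\ref{finite-thm}, and since $\det G$ is a mutation invariant (it equals $1-C/4$ for the Markov constant $C$), the quiver cannot lie in any of the semidefinite classes $(2,2\cos\frac{\pi}{d},2\cos\frac{\pi}{d})$ either. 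Note that your claimed equivalence would ``prove'' that every rational-angled spherical triangle gives a finite mutation class, contradicting item~(2) of the very corollary you are proving. Fortunately, you only ever use the true implication (mutation-finite $\Rightarrow$ spherical or Euclidean realization, hence $\det G\ge 0$), which is precisely what \cite{FT-rk3} provides and what the paper cites; with the claim restated as a one-way implication, your treatment of items (1), (3), (4), (5) is in substance the intended argument.

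Separately, your deduction of item~(2) --- ``the only Euclidean class carrying such a denominator is that of $(2,2\cos\frac{\pi}{d},2\cos\frac{\pi}{d})$'' --- contains a step that fails as stated: Euclidean classes whose denominator is a proper multiple of $d$ also carry arrows marked $\frac{m}{d}$ with $\gcd(m,d)=1$. For instance, mutating $(2,2\cos\frac{\pi}{14},2\cos\frac{\pi}{14})$ at the vertex incident to the two arrows of weight $2\cos\frac{\pi}{14}$ produces the cyclic quiver $(2\cos\frac{\pi}{7},2\cos\frac{\pi}{14},2\cos\frac{\pi}{14})$, which contains an arrow marked $\frac{1}{7}$ but is not mutation-equivalent to $(2,2\cos\frac{\pi}{7},2\cos\frac{\pi}{7})$. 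To be fair, this is an imprecision in the corollary's own wording rather than something you introduced: item~(2) is correct, and is how the paper later applies it (in Lemma~\ref{thm: ser} and Theorem~\ref{no big}), only when $d$ is the \emph{highest} denominator occurring in the mutation class of $Q$; your argument should state and use that restriction rather than assert the conclusion for an arbitrary coprime $d>5$.
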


The equalities $(4)$--$(5)$ in Corollary~\ref{cor3} have a geometric interpretation: for every mutation-finite quiver there is a spherical or Euclidean triangle with the corresponding angles (the triangle is acute-angled if $Q$ is acyclic, and has an obtuse angle otherwise). We also remind that the mutations can be modeled by partial reflections, see~\cite{FT-rk3}.

\section{High denominators in ranks $5$ and higher}
\label{sec-high}
In this section we show that there are no high denominator quivers of rank higher than $4$ (Theorem~\ref{no big}).
To prove the theorem, we start with several technical lemmas (Lemma~\ref{no markov}-\ref{no-ac-high}) about rank $3$ and $4$ quivers. 

\begin{definition}
Given a quiver $Q$, we say that $d\in\N$ is the {\em highest denominator} in the mutation class of $Q$ if all weights of quivers in the mutation class of $Q$ are either $2$ or of the form $2\cos \frac{p'}{d'}$ with $d'\le d$, and there exists a quiver $Q'$ in the mutation class of $Q$ with an arrow of weight $2\cos \frac{p}{d}$, $\gcd(p,d)=1$. Abusing notation, we will say that $Q$ is a {\em denominator $d$ quiver}.
\end{definition}

\begin{lemma}
\label{no markov}
No connected mutation-finite quiver contains the Markov quiver $(2,2,2)$ as a proper subquiver.   

\end{lemma}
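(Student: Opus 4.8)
The plan is to argue by contradiction, using Corollary~\ref{cor3}(3), which pins down the shape of any mutation-finite rank $3$ quiver carrying an edge of weight $2$. Suppose $Q$ is connected and mutation-finite and contains the Markov quiver $(2,2,2)$ on a triple of vertices, say $Q_1,Q_2,Q_3$, as a \emph{proper} subquiver. The first step is to recall that any full subquiver of a mutation-finite quiver is again mutation-finite, since mutating $Q$ at a vertex of a chosen subset $I$ restricts to the corresponding mutation of the subquiver $Q_I$; hence every rank $3$ subquiver of $Q$ is mutation-finite and Corollary~\ref{cor3} applies to it.

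Next I would use connectivity to produce the offending vertex. Since $Q$ has rank $\ge 4$ and is connected, some vertex $Q_4\notin\{Q_1,Q_2,Q_3\}$ is joined to the Markov triangle; say $Q_4$ is joined to $Q_1$. Now look at the rank $3$ subquivers $Q_{124}$ and $Q_{134}$. Each of them is connected (it contains a weight-$2$ Markov edge together with the edge $Q_1Q_4$) and mutation-finite, and each contains an edge of weight $2$. By Corollary~\ref{cor3}(3) each must therefore be \emph{cyclic}, i.e. all three of its edges are present. Forcing $Q_{124}$ cyclic shows $Q_4$ is joined to $Q_2$, and forcing $Q_{134}$ cyclic shows $Q_4$ is joined to $Q_3$. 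Thus $Q_4$ is joined to all three Markov vertices, and all three triangles $Q_{124}$, $Q_{234}$, $Q_{134}$ are cyclic.

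The contradiction then comes purely from the orientations. Write the Markov $3$-cycle as $Q_1\to Q_2\to Q_3\to Q_1$ (the opposite orientation is handled identically). On the vertex set $\{Q_1,Q_2,Q_4\}$ the only directed $3$-cycle containing the arc $Q_1\to Q_2$ is $Q_1\to Q_2\to Q_4\to Q_1$, so cyclicity of $Q_{124}$ forces the edge between $Q_2$ and $Q_4$ to be oriented $Q_2\to Q_4$. Applying the same reasoning to $Q_{234}$, the only directed $3$-cycle containing $Q_2\to Q_3$ is $Q_2\to Q_3\to Q_4\to Q_2$, which forces the edge between $Q_2$ and $Q_4$ to be oriented $Q_4\to Q_2$. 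These two requirements are incompatible, giving the desired contradiction.

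I expect the orientation step to be the crux: the weights play essentially no role once Corollary~\ref{cor3}(3) has guaranteed that every triangle through $Q_4$ is a genuine oriented $3$-cycle, and the obstruction is the impossibility of simultaneously closing up two such cycles that share the edge $Q_2Q_4$ with opposite required directions. The only points needing care are the routine verifications that the rank $3$ subquivers invoked are connected (so that Corollary~\ref{cor3} genuinely applies) and that the ``proper subquiver'' hypothesis is exactly what supplies the extra vertex $Q_4$.
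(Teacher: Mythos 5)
Your proof is correct and takes essentially the same approach as the paper: the paper likewise passes to a rank~$4$ connected subquiver containing the Markov triangle plus one adjacent vertex and applies Corollary~\ref{cor3}(3) to force all its rank~$3$ subquivers to be cyclic, dismissing this as ``clearly impossible.'' Your explicit orientation argument (the two cyclic triangles sharing the edge $Q_2Q_4$ demanding opposite directions) just fills in the detail the paper leaves to the reader.
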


\begin{proof}
Suppose the contrary,  i.e. $Q=Q_{1234}$ with  $Q_{123}=(2,2,2)$ is a mutation-finite quiver.
By Corollary~\ref{cor3}(3) all rank 3 subquivers of $Q$ should be cyclic, which is clearly impossible.  
\end{proof}


%
%
%

\begin{figure}[!h]
\psfrag{II}{\small  $I\!I$}
\psfrag{1}{\small $\frac{1}{2n}$}
\psfrag{n-1}{\small $\frac{n-1}{2n}$}
\psfrag{1_}{\small $\frac{1}{2n+1}$}
\psfrag{n}{\small $\frac{n}{2n+1}$}
\epsfig{file=./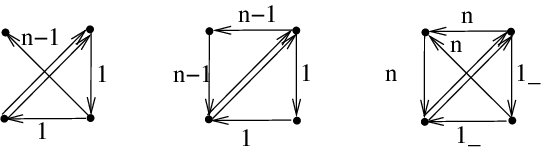,width=0.99\linewidth}
\caption{Three infinite series of  quivers. Following notation introduced in Section~\ref{3}, the arrows of weight $2\cos \frac{\pi p}{q}$ are labeled by $\frac{p}{q}$, and the arrows of weight $2$ are shown by double arrows. }
\label{ser}
\end{figure}

\begin{lemma}
\label{thm: ser}
Let $Q$ be a connected mutation-finite quiver of rank $4$. Suppose that $d>5$ is the highest denominator of weights of arrows in the mutation class of $Q$. 
Then either $Q$ or $Q\op$ is mutation-equivalent to  one of the quivers listed in Fig.~\ref{ser}.
\end{lemma}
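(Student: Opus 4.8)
The plan is to classify all connected mutation-finite rank $4$ quivers whose mutation class contains an arrow of weight $2\cos\frac{\pi p}{d}$ with $d>5$ and $\gcd(p,d)=1$, and to show that every such quiver (up to reversing all arrows) appears in Fig.~\ref{ser}. The organising principle is that a high-denominator arrow is extremely rigid: by Corollary~\ref{cor3}(2), any rank $3$ subquiver containing such an arrow is mutation-equivalent to the Euclidean quiver $(2,2\cos\frac{\pi}{d},2\cos\frac{\pi}{d})$, and in particular must contain an arrow of weight $2$. So first I would fix a quiver $Q'$ in the mutation class realising the highest denominator $d$, pin down the rank $3$ subquivers through the high-denominator arrow using Corollary~\ref{cor3}, and in particular use Corollary~\ref{cor3}(3): \emph{any} rank $3$ subquiver containing an arrow of weight $2$ is forced to be either the Markov quiver $(2,2,2)$ or $(2,2\cos\frac{\pi m}{d'},2\cos\frac{\pi m}{d'})$, hence cyclic. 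Lemma~\ref{no markov} then removes the Markov possibility, so every weight-$2$ edge sits only inside cyclic triangles of the special form.

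Next I would organise the case analysis around the weight-$2$ edges that necessarily arise. Writing the rank $4$ quiver as $Q'_{1234}$ with, say, $Q'_{123}=(2,2\cos\frac{\pi}{d},2\cos\frac{\pi}{d})$ the fixed high-denominator triangle, I would examine how the fourth vertex $Q'_4$ attaches to this triangle. Each of the three triangles $Q'_{124},Q'_{134},Q'_{234}$ is again a mutation-finite rank $3$ quiver, and the two of them containing the weight-$2$ edge are tightly constrained by Corollary~\ref{cor3}(3), while the triangles containing a $d$-denominator arrow are constrained by parts (4) and (5) (acyclic versus cyclic), where the equality cases $\frac{m}{d}+\frac{s}{d}+\frac{t}{d}=1$ or $\frac{m}{d}+\frac{s}{d}+(1-\frac{t}{d})=1$ force the remaining weights into a very short list of admissible triples. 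This combinatorial/arithmetic bookkeeping, combined with the requirement that the weights of $Q'_4$ be consistent across all triangles it belongs to, should cut the number of candidate attachments down to a handful of quivers.

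The step I expect to be the main obstacle is verifying that each surviving candidate is \emph{genuinely} mutation-finite (not merely locally admissible on every rank $3$ subquiver), and conversely that no candidate has been missed because a high denominator could be hidden: a quiver might have all weights at denominator $\le 5$ in the chosen representative yet acquire a denominator-$d$ arrow only after several mutations. To handle this I would argue in both directions. For finiteness, I would exhibit, for each candidate, a finite list closed under all mutations -- most efficiently by recognising the candidates as the three named series $\widetilde G_{2,2n}^{(*,+)}$, $\widetilde G_{2,2n}^{(*,*)}$, and $\widetilde G_{2,2n+1}^{(*,*)}$ and checking mutation-invariance of the forms in Fig.~\ref{ser} directly (the weights $\frac{1}{2n},\frac{n-1}{2n},\frac{n}{2n+1}$ being precisely the parameters that make a partial reflection close up). For completeness, I would run the mutations outward from the fixed high-denominator triangle: since a $d$-denominator arrow persists (its denominator is mutation-invariant up to the bound, by definition of highest denominator), and since every weight-$2$ edge lives in a cyclic triangle, the local structure propagates and forces the global shape. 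Finally, since reversing all arrows sends a mutation class to that of $Q\op$ and preserves all weights, it suffices to identify the candidates up to the operation $Q\mapsto Q\op$, which is exactly the ``$Q$ or $Q\op$'' in the statement.
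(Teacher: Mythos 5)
Your first two paragraphs are essentially the paper's own proof: pass to a representative in which the high-denominator triangle has the Euclidean form $(2,2\cos\frac{\pi}{d},2\cos\frac{\pi}{d})$ with the double arrow visible, use Lemma~\ref{no markov} together with Corollary~\ref{cor3}(3) to force the other triangle through the double arrow to be cyclic of the form $(2,2\cos\frac{\pi p}{d'},2\cos\frac{\pi p}{d'})$, and then apply the Euclidean equality case of Corollary~\ref{cor3}(4) to the remaining acyclic triangle; in the paper this bookkeeping collapses to the single equation $\frac{1}{d}+\frac{p}{d'}+\frac{q}{d''}=1$, whose three solution regimes ($\frac{p}{d'}=\frac12$, $\frac{q}{d''}=\frac12$, or neither) produce exactly the three series of Fig.~\ref{ser}.

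Your third paragraph, however, misreads the scope of the lemma. It is stated (and used) only as a \emph{necessary} condition -- the paper flags this explicitly in Remark~\ref{necessary} -- so you do not need to verify that the candidate quivers are genuinely mutation-finite here; that is the content of Section~\ref{4high} (Lemmas~\ref{odd family} and~\ref{even families}), and it is substantially harder than your one-line sketch suggests (it requires exhibiting a parametrized standard form with inequalities closed under all mutations). Likewise, the ``hidden denominator'' worry is vacuous: the conclusion ``$Q$ or $Q\op$ is mutation-equivalent to a quiver of Fig.~\ref{ser}'' is invariant under mutation, so one simply works with the representative of the class in which a denominator-$d$ arrow is visible -- which is precisely the reduction you (and the paper) make at the outset. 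Neither point damages your core argument; they are just unnecessary for the statement being proved.
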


\begin{proof}
Consider such a quiver $Q$. We can assume that $Q$ has an arrow of weight  $2\cos\frac{\pi m}{d}$, $d>5$, and $m$ and $d$ are coprime.
Let $Q_{123}$ be a rank $3$ connected subquiver of $Q$ containing an arrow of weight  $2\cos\frac{\pi m}{d}$. Then $Q_{123}$ corresponds to a Euclidean triangle, and hence, the mutation class of $Q_{123}$
contains an oriented subquiver with weights $(2,2\cos\frac{\pi}{d},2\cos\frac{\pi}{d})$ (without loss of generality we can assume that this is the subquiver $Q_{123}$ itself, and $Q_{13}$ is the double arrow).
Moreover, as a mutation-finite rank $3$ subquiver $Q_{134}$ with a double arrow cannot be a Markov quiver (see Lemma~\ref{no markov}), $Q_{134}$  should be a cyclic quiver with the weights
$(2,2\cos\frac{\pi p}{d'},2\cos\frac{\pi p}{d'})$) for some $p\le d'/2$, $d'\le d$ (see Corollary~\ref{cor3}(4)).  We conclude that $Q$ is
the quiver 
shown in Fig.~\ref{pf1}, where the weight of $Q_{24}$ is $2\cos \frac{\pi q}{d''}$ with  $q\le d''/2$, $d''\le d$ 
(note that the arrow $Q_{24}$ may be oriented in the opposite way -- this would mean the quiver in Fig.~\ref{pf1} is the opposite quiver $Q\op$).

\begin{figure}[!h]
\psfrag{II}{\small  $I\!I$}
\psfrag{q}{\small $\frac{q}{d''}$}
\psfrag{1_}{\small $\frac{1}{d}$}
\psfrag{p}{\small $\frac{p}{d'}$}
\psfrag{1}{\small $1$}
\psfrag{2}{\small $2$}
\psfrag{3}{\small $3$}
\psfrag{4}{\small $4$}
\epsfig{file=./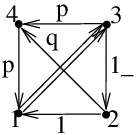,width=0.3\linewidth}
\caption{To the proof of Lemma~\ref{thm: ser}.}
\label{pf1}
\end{figure}

Consider the acyclic subquiver $Q_{234}$: as it contains an arrow $Q_{23}$ of weight $2\cos\frac{\pi}{d}$ with $d>5$, Corollary~\ref{cor3}(4) implies
\begin{equation}
\label{formula}  
\frac{1}{d}+\frac{p}{d'}+\frac{q}{d''}=1.
\end{equation}

We will consider three cases: either $p/d'=1/2$ (i.e. the arrows $Q_{34}$ and $Q_{14}$ vanish), or $q/d''=1/2$ (the arrow $Q_{24}$ vanishes), or otherwise, all six arrows are present in $Q$.

\noindent
{\bf Case 1. } If $\frac{p}{d'}=\frac{1}{2}$, then  $\frac{q}{d''}=\frac{1}{2}-\frac{1}{d}$. Since $d''\le d$, we conclude that $d=2n$ for some $n\in \N$, which implies
$d''=2n$, $q=n-1$, and  $Q$ is the quiver shown in the middle of  Fig.~\ref{ser}.

\noindent
{\bf Case 2.} If  $\frac{q}{d''}=\frac{1}{2}$, then  $\frac{p}{d'}=\frac{1}{2}-\frac{1}{d}$, so that  $d=d'=2n$, and $p=n-1$ (where $n\in \N$), which 
produces the quiver on the left of  Fig.~\ref{ser}.

\noindent
{\bf Case 3.} Suppose that $\frac{p}{d'}\ne \frac{1}{2} \ne  \frac{q}{d''}$. This implies that $\frac{p}{d'}< \frac{1}{2}$ and  $\frac{q}{d''}< \frac{1}{2}$. Moreover, as $d'\le d$ and $d''\le d$, we see that $\frac{1}{2}-\frac{p}{d'}\ge \frac{1}{2d}$ and  $\frac{1}{2}-\frac{q}{d''}\ge \frac{1}{2d}$. In view of~(\ref{formula}) this implies  $\frac{1}{2}-\frac{p}{d'}= \frac{1}{2d}$ and  $\frac{1}{2}-\frac{q}{d''}= \frac{1}{2d}$.
Hence, $d=d'=d''=2n+1$ for some $n\in\N$, $p=q=n$,
and $Q$ is the quiver shown on the right of  Fig.~\ref{ser}.
\end{proof}

\begin{remark}
\label{necessary}
Notice that Lemma~\ref{thm: ser} gives a {\em necessary} condition for a quiver of rank $4$ with large denominator to be mutation-finite. We will show that this condition is also sufficient (i.e., all quivers in Fig.~\ref{ser} are indeed mutation-finite) in Section~\ref{4high}.  
\end{remark}

The following lemma can be verified by a straightforward computation.
\begin{lemma}
\label{l-1,k}
Let  $d=2n+1$ where $n\ge 2$, $n\in \N$. Let $Q$ be an acyclic quiver of rank $3$ and $\mu$ be a non-sink/source mutation of $Q$.
Then
\begin{itemize}
\item[(a)]
  if $Q=(2\cos\frac{\pi}{d},2\cos\frac{\pi n}{d},-2\cos\frac{\pi n}{d})$, 
  then $\mu(Q)=(2\cos\frac{\pi n }{d},2\cos\frac{\pi n}{d},2\cos\frac{\pi (n-1)}{d})$;
\item[(b)]
  if $Q=(2\cos\frac{\pi n}{d},2\cos\frac{\pi n}{d},-2\cos\frac{\pi }{d})$, 
  then $\mu(Q)=(2\cos\frac{\pi n }{d},2\cos\frac{\pi n}{d},2)$.
\end{itemize}
\end{lemma}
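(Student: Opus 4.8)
The plan is to verify Lemma~\ref{l-1,k} by a direct computation of the quiver mutation rule, since the statement itself advertises that it can be checked by a straightforward computation. Recall that for a quiver with real weights, mutation $\mu_k$ at a vertex reverses all arrows incident to $Q_k$ and modifies the weight of each arrow $Q_{ij}$ (with $i,j\ne k$) according to the usual sign-coherent rule governed by whether the path $i\to k\to j$ agrees with the arrow $Q_{ij}$. In the rank $3$ setting it is most convenient to work with the geometric normalization from Section~\ref{3}: an acyclic quiver $(a,b,-c)$ corresponds to a triangle, and a non-sink/source mutation acts by a partial reflection as recalled after Corollary~\ref{cor3}. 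Concretely, if we write the three weights as $a,b,c$ with the appropriate signs, the mutated weight opposite the mutated vertex is obtained from the rule $c'=|ab-c|$ (up to the sign/orientation bookkeeping), while the other two weights are unchanged in magnitude but their arrows may be reversed.

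For part (a), I would set $d=2n+1$ and start from $Q=(2\cos\frac{\pi}{d},2\cos\frac{\pi n}{d},-2\cos\frac{\pi n}{d})$, mutating at the unique vertex that is neither a sink nor a source. The two unmutated weights $2\cos\frac{\pi n}{d}$ and $2\cos\frac{\pi n}{d}$ persist, so the whole content is computing the new weight in place of $2\cos\frac{\pi}{d}$ and checking the resulting orientation is cyclic. The key trigonometric identity I expect to invoke is the product-to-sum formula
\begin{equation*}
2\cos\tfrac{\pi n}{d}\cdot 2\cos\tfrac{\pi n}{d}=2\cos\tfrac{2\pi n}{d}+2,
\end{equation*}
so that the mutation rule produces a weight of the form $\bigl|2\cos\frac{2\pi n}{d}+2-2\cos\frac{\pi}{d}\bigr|$, and then I would reduce $2\cos\frac{2\pi n}{d}$ using $\frac{2\pi n}{d}=\pi-\frac{\pi}{d}$ (since $2n+1=d$) to land on $2\cos\frac{\pi(n-1)}{d}$. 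The same mechanism drives part (b): starting from $(2\cos\frac{\pi n}{d},2\cos\frac{\pi n}{d},-2\cos\frac{\pi}{d})$, the mutated weight becomes $\bigl|2\cos\frac{\pi n}{d}\cdot 2\cos\frac{\pi}{d}-2\cos\frac{\pi n}{d}\bigr|$, and the identity $2\cos\frac{\pi n}{d}\cos\frac{\pi}{d}=\cos\frac{\pi(n+1)}{d}+\cos\frac{\pi(n-1)}{d}$ together with $\frac{\pi(n+1)}{d}=\pi-\frac{\pi n}{d}$ should collapse the expression to weight exactly $2$.

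The main obstacle, such as it is, is purely bookkeeping rather than conceptual: I must keep careful track of the arrow orientations so that ``non-sink/source mutation'' is applied to the correct vertex and so that the output is recorded as a cyclic triple $(\cdot,\cdot,\cdot)$ rather than an acyclic one $(\cdot,\cdot,-\cdot)$. In part (a) one checks the orientations reverse to give a genuine $3$-cycle; in part (b) the appearance of the weight $2$ signals that the mutated subquiver has become a quiver with a double arrow, consistent with Corollary~\ref{cor3}(3), and I would confirm the orientation is cyclic there as well. Because the constraint $d=2n+1$ with $n\ge 2$ guarantees $\frac{\pi n}{d}<\frac{\pi}{2}<\frac{\pi(n+1)}{d}$, all cosines retain the correct signs and no degenerate cases arise, so the absolute values resolve unambiguously. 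I would close by remarking that both computations are instances of the partial-reflection model, which explains why the weights transform by these linear-in-cosine identities.
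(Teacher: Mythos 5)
Your overall plan (apply the real-weight exchange rule directly and simplify by product-to-sum identities, equivalently use the triangle/partial-reflection model) is exactly what the paper intends, since the paper offers no written proof beyond ``straightforward computation''. But the execution has a genuine error, and neither of your two computations actually closes. For an \emph{acyclic} quiver $(a,b,-c)$ mutated at its unique non-sink/source vertex, the exchange rule gives the new weight $c'=ab+c$, where $a,b$ are the weights of the two arrows through the mutated vertex (in $(a,b,-c)$ the $c$-arrow runs from source to sink, parallel to the path through the middle vertex, so the two contributions \emph{add}); the rule $c'=|ab-c|$ that you use is the rule for mutating a \emph{cyclic} quiver. This sign cannot be deferred as ``bookkeeping''. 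With your minus sign, the expression in (a) is
$\bigl|2\cos\frac{2\pi n}{d}+2-2\cos\frac{\pi}{d}\bigr|=\bigl|2-4\cos\frac{\pi}{d}\bigr|$,
which is \emph{not} $2\cos\frac{\pi(n-1)}{d}$ (at $d=7$, $n=3$ it is $\approx 1.60$ versus $2\cos\frac{2\pi}{7}\approx 1.25$); and in (b) your expression $\bigl|2\cos\frac{\pi n}{d}\cdot 2\cos\frac{\pi}{d}-2\cos\frac{\pi n}{d}\bigr|=\bigl|2\cos\frac{\pi(n-1)}{d}-4\cos\frac{\pi n}{d}\bigr|$ is not $2$ (at $d=5$ it is $\approx 0.38$). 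You also pair the weights inconsistently: the product must always be taken over the two arrows incident to the mutated vertex, which in the notation $(a,b,-c)$ are $a$ and $b$ (this is the only reading under which part (b) of the lemma is true); in (a) you multiply the two weights you want to survive, while in (b) you multiply one surviving weight with the one that changes.

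Carrying the computation out correctly, $\mu$ sends $(a,b,-c)$ to the cyclic quiver $(a,b,ab+c)$. In case (b), $ab+c=2+2\cos\frac{2\pi n}{d}+2\cos\frac{\pi}{d}=2$, since $\frac{2\pi n}{d}=\pi-\frac{\pi}{d}$, confirming the stated answer. In case (a), $ab+c=2\cos\frac{\pi(n+1)}{d}+2\cos\frac{\pi(n-1)}{d}+2\cos\frac{\pi n}{d}=2\cos\frac{\pi(n-1)}{d}$, but the weights that survive are $a=2\cos\frac{\pi}{d}$ and $b=2\cos\frac{\pi n}{d}$, so the result is $(2\cos\frac{\pi}{d},\,2\cos\frac{\pi n}{d},\,2\cos\frac{\pi(n-1)}{d})$ --- \emph{not} the triple printed in the lemma, whose first entry $2\cos\frac{\pi n}{d}$ is evidently a misprint for $2\cos\frac{\pi}{d}$. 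You can see the printed triple cannot be right: a non-sink/source mutation preserves the two weights at the mutated vertex, and the marks $\{n,n,n-1\}$ violate the Euclidean condition of Corollary~\ref{cor3}(5) (no mark equals the sum of the other two), whereas $\{1,n,n-1\}$ satisfies $1+(n-1)=n$; the corrected triple is also the one actually used in Fig.~\ref{5-odd}. That your sketch ``confirms'' the misprinted statement is the clearest sign the computations were never carried through: finishing them, or invoking the triangle model you mention (the new angle is the difference $\bigl|\frac{\pi}{d}-\frac{\pi n}{d}\bigr|$ of the two angles adjacent to the mutated side, while the angle $\frac{\pi n}{d}$ opposite it is what disappears), would have surfaced both the sign error and the typo.
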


\begin{remark}
The quiver $Q$ in Lemma~\ref{l-1,k} corresponds to an acute-angled  Euclidean triangle $T$ with angles $(\frac{\pi}{d},\frac{\pi n}{d},\frac{\pi n}{d})$, so the statement can also be easily checked by applying partial reflections.
\end{remark}

\begin{lemma}
\label{no-ac-high}  
Let $Q=Q_{1234}$ be an acyclic connected rank $4$ quiver. Suppose that the vertex $Q_3$ is not joined with $Q_1$ in $Q$.
If the weight of $Q_{12}$ is $2\cos \frac{\pi m}{d}$ with $d>5$, $\gcd(m,d)=1$,  then $Q$ is mutation-infinite.

%

\end{lemma}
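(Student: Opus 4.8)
The plan is to argue by contradiction: assume $Q$ is mutation-finite. Then every connected rank $3$ subquiver of $Q$ is again mutation-finite, so Corollary~\ref{cor3} applies to each of $Q_{123},Q_{124},Q_{134},Q_{234}$, and Lemma~\ref{thm: ser} tells us that $Q$ or $Q\op$ must be mutation-equivalent to one of the three series of Fig.~\ref{ser}. The first observation I would record is that, since $Q_3$ and $Q_1$ are not joined, neither $Q_{123}$ nor $Q_{134}$ can be cyclic: both are acyclic paths (possibly degenerating to a single edge), while only $Q_{124}$ and $Q_{234}$ can carry a $3$-cycle. In particular, whenever such a path carries the high-denominator arrow $Q_{12}$ (denominator $d>5$), the second bullet of Corollary~\ref{cor3}(4) forces it to realize a \emph{Euclidean right triangle}: the vanishing edge contributes the angle-fraction $\tfrac12$, and the remaining fractions satisfy $\tfrac md+\tfrac sd=\tfrac12$. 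I would then set $\alpha=\tfrac md$ for the weight of $Q_{12}$ and read off the complementary fraction $\tfrac12-\alpha$ for $Q_{23}$.

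Next I would split into cases according to how $Q_3$ and $Q_4$ attach (through $Q_{23},Q_{34}$ and through $Q_{14},Q_{24},Q_{34}$), using acyclicity to keep the orientations consistent. The engine is again Corollary~\ref{cor3}(4)--(5): a high-denominator arrow inside a connected rank $3$ subquiver forces that subquiver to be Euclidean, i.e. its angle-fractions sum to $1$. For most attachment patterns the complementary fraction $\tfrac12-\alpha=\tfrac{d-2m}{2d}$ is \emph{itself} a high-denominator weight (its reduced denominator is $2d$ when $d$ is odd, and usually $d$ when $d$ is even), so several of $Q_{124},Q_{134},Q_{234}$ are simultaneously Euclidean. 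Assembling the resulting linear relations among the angle-fractions over-determines $\alpha$: in the branching patterns one is led to an equation such as $2\left(\tfrac12-\alpha\right)=\tfrac12$, forcing $\alpha=\tfrac14$ and hence $d\mid 4$, contradicting $d>5$. This should dispose of every configuration in which $Q_4$ is attached so as to create a second Euclidean triangle through the high-denominator arrow.

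What remains are the \emph{tree- and path-like} configurations, together with the finitely many small exceptional denominators (such as $d=6,10$) where $\tfrac12-\alpha$ has denominator $\le 5$ and the propagation above breaks down; here the rank $3$ data alone is consistent, so a genuinely rank $4$ argument is needed. For these I would mutate $Q$ to expose a forbidden configuration: mutating at the interior vertex of a path creates the previously vanishing edge $Q_{13}$ (or $Q_{24}$) and produces a cyclic rank $3$ subquiver whose new weight, computed from the partial-reflection rule of~\cite{FT-rk3} exactly as in Lemma~\ref{l-1,k}, is driven outside the list of Corollary~\ref{cor3} or forces a proper Markov subquiver excluded by Lemma~\ref{no markov}; equivalently, the symmetric Gram form of such a $Q$ turns out to be indefinite, which is incompatible with the positive semi-definite forms realizing the three series of Lemma~\ref{thm: ser}. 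I expect this last family to be the main obstacle, both because it requires controlling an a priori unbounded mutation orbit (rather than reading off a single numerical contradiction) and because of the bookkeeping needed to rule out every orientation of the path and every small-denominator exception.
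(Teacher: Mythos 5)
Your proposal is a plan whose decisive cases are exactly the ones you leave open, so it has a genuine gap. The rank-$3$ ``propagation'' engine cannot finish the job, and not only for $d\in\{6,10\}$: already for the path $Q_1\!-\!Q_2\!-\!Q_3\!-\!Q_4$ with arrows marked $\frac{m}{d}$, $\frac12-\frac{m}{d}$, $\frac{m}{d}$ (any $d>5$), every connected rank-$3$ subquiver is a Euclidean acute-angled triangle, so all constraints of Corollary~\ref{cor3} are satisfied and no assembly of angle relations can produce a contradiction; a genuinely rank-$4$, mutation-based argument is unavoidable, as you yourself note. For that part you offer only intentions (``I would mutate\dots'', ``I expect\dots'') and explicitly call it the main obstacle. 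Neither fallback closes it: chasing the orbit until a forbidden configuration appears is precisely the unbounded search you admit you cannot control, and the Gram-form argument is misattributed --- Lemma~\ref{thm: ser} supplies no positive semi-definite form; semi-definiteness of realizations for the three series is Section~\ref{sec real} material (Lemma~\ref{initial} and Lemma~\ref{series-geom}), proved later by its own induction, and is far heavier machinery than this lemma warrants.

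The idea you are missing, which is essentially the whole of the paper's proof, is that a \emph{single} mutation suffices. After sink/source mutations one may assume $Q_3$ is neither a sink nor a source (hence joined to both $Q_2$ and $Q_4$). Since $Q$ is acyclic, $\mu_3$ changes the weight of $Q_{24}$ while preserving its direction; and since $Q_3$ is not joined to $Q_1$, it preserves $Q_{12}$ and $Q_{14}$ entirely, and $Q'_{124}$ remains acyclic. But Corollary~\ref{cor3}(4), applied to the acyclic triangle $Q_{124}$ containing the denominator-$d$ arrow $Q_{12}$, forces the angle fractions to sum to exactly $1$ both before and after the mutation, i.e.\ the weight of $Q_{24}$ is a function of the weights of $Q_{12}$ and $Q_{14}$ --- so it cannot change. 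This contradiction is uniform in the shape of $Q$ and in $d$: no enumeration of attachment patterns, no exceptional denominators, no orbit control. (The only configurations needing extra care are the fully degenerate ones you flagged --- $Q_3$ a leaf, or $Q_{14}$ and $Q_{24}$ both vanishing, so that $Q_{124}$ is disconnected and unconstrained before mutating; there the same comparison must be pushed one or two mutations further, but this remains a bounded local check, not the open-ended orbit analysis your proposal falls back on.)
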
  

\begin{proof}
Suppose that $Q$ is mutation-finite, and assume first that $Q_3$ is neither a sink nor a source. In particular, it is connected to both $Q_2$ and $Q_4$. Then, as $Q$ is  acyclic, the mutation $\mu_3$ at vertex $Q_3$ changes the weight of the arrow $Q_{24}$ but does not change its direction.

Now, consider the subquiver $Q_{124}$. Since  the arrow incident to $Q_{12}$ has the weight $2\cos \frac{\pi m}{d}$  with $d>5$, 
we conclude that the acyclic subquiver $Q_{124}$ can be modeled by an acute-angled Euclidean triangle, and moreover, the weight of the arrow $Q_{24}$ is uniquely determined by the weights of $Q_{12}$ and $Q_{14}$.
Since $Q_3$ is not joined with $Q_1$, the mutation $\mu_3$ preserves the weights and directions of arrows $Q_{12}$ and $Q_{14}$. Since $\mu_3$ also preserves the direction of $Q_{24}$, this implies that the subquiver $Q'_{124}$ of the mutated quiver $Q'=\mu_3(Q)$ is still acyclic and satisfies the same properties as $Q_{124}$: it is modeled by an acute-angled Euclidean triangle. Hence, the weight of the new arrow $Q_{24}'$ should coincide with the weight of the old arrow $Q_{24}$.
This contradicts the result of the paragraph above. The contradiction shows that $Q$ is mutation-infinite.

Assume now that $Q_3$ is either a sink or a source. We will now show that applying sink/source mutations only we can make $Q_3$ neither a sink nor a source, and thus reduce the case to the one already being considered.

Indeed, without loss of generality we can assume $Q_3$ is a sink. Applying, if necessary, a source mutation in $Q_1$, we can assume that $Q_1$ is not a source. Since $Q$ is acyclic, it contains a source, and thus either $Q_2$ or $Q_4$ is a source. After mutating at a source, the vertex  $Q_3$ is neither a sink nor a source anymore, so we are in the assumptions of the first case.    

\end{proof}

\begin{theorem}
\label{no big}
There is no rank $5$ connected mutation-finite quiver with an arrow of weight $2\cos \frac{\pi m}{d}$ 
with $d>5$, $m\le d/2$, $\gcd(m,d)=1$.

\end{theorem}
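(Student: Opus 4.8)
The plan is to argue by contradiction. Suppose $Q$ is a connected mutation-finite quiver of rank $5$ carrying, somewhere in its mutation class, an arrow of weight $2\cos\frac{\pi m}{d}$ with $d>5$ and $\gcd(m,d)=1$. The strategy is to pin down the local structure of $Q$ around a high-denominator edge using the rank-$4$ classification of Lemma~\ref{thm: ser}, and then show that the fifth vertex cannot be attached.

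First I would record a normalization device: since the weight of an edge $Q_{ab}$ produced by a mutation $\mu_v$ depends only on the weights of $Q_{ab},Q_{av},Q_{bv}$, restriction to a vertex subset commutes with mutations performed at vertices of that subset. Consequently every connected rank-$4$ subquiver of $Q$ containing a high-denominator arrow is itself mutation-finite, hence by Lemma~\ref{thm: ser} is, up to passing to the opposite quiver, one of the three series of Fig.~\ref{ser}. Choosing a connected rank-$4$ subquiver $R$ through a high-denominator edge and applying to $Q$ the sequence of mutations at the four vertices of $R$ that brings $R$ into series form, I may then assume outright that $R=Q_{1234}$ is a series quiver (replacing $Q$ by $Q\op$ if needed). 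Thus $Q_{1234}$ has a double arrow $Q_{13}$ and high-denominator edges $Q_{12},Q_{23}$ of weight $2\cos\frac{\pi}{d}$, and $Q_5$ is the remaining vertex.

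The main structural input is the rigidity of the double arrow. By Corollary~\ref{cor3}(3) any rank-$3$ subquiver containing a weight-$2$ edge is either the Markov quiver $(2,2,2)$ or a cyclic quiver $(2,w,w)$; since $Q$ is connected of rank $5$, Lemma~\ref{no markov} forbids a proper Markov subquiver, so every vertex joined to both endpoints $Q_1,Q_3$ of the double arrow is joined to them with equal weights and forms a cyclic triangle $(2,w,w)$. Comparing with Lemma~\ref{thm: ser}, the double arrow admits at most two such ``satellites'', and when it has two their weights are distinct, namely $2\cos\frac{\pi}{d}$ and the ``large'' weight $2\cos\frac{(n-1)\pi}{2n}$ (even series, $d=2n$) or $2\cos\frac{n\pi}{2n+1}$ (odd series, $d=2n+1$). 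I would then run through the ways $Q_5$ can meet the double arrow and the high-denominator edges: in each case the subquivers $Q_{1235}$, $Q_{1345}$ and, where relevant, $Q_{2345}$ are again series quivers, and matching them against the fixed series quiver $Q_{1234}$ forces $Q_5$ to occupy the large-satellite slot already occupied by $Q_4$ (its weight to $Q_1,Q_3$ being determined by the denominator $d$, hence equal to that of $Q_4$). This produces a rank-$4$ subquiver in which the double arrow $Q_{13}$ carries two satellites of \emph{equal} weight, which for $d>5$ is never a series quiver -- contradicting Lemma~\ref{thm: ser} as soon as that common weight is itself of high denominator, which holds in the odd series for all $n\ge 3$ and in the even series except for $d=6,10$.

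The hard part will be the residual low-denominator configurations: the cases $d=6,10$, where the duplicated weight $2\cos\frac{(n-1)\pi}{2n}$ reduces to denominator $\le 5$, and the ``star'' configurations in which $Q_5$ is joined to only one endpoint of the double arrow, where the series-matching of $Q_{2345}$ forces a \emph{second} double arrow $Q_{45}$. For these I cannot invoke Lemma~\ref{thm: ser} directly; instead I would bring the offending rank-$4$ subquiver to an acyclic form by sink/source mutations -- and, in the odd series, by the explicit non-sink/source mutations of Lemma~\ref{l-1,k} -- arranging a high-denominator edge together with a non-adjacent pair of vertices, and then conclude mutation-infiniteness from Lemma~\ref{no-ac-high}. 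Assembling the cases contradicts the standing assumption that $Q$ is mutation-finite, proving the theorem.
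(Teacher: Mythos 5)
Your opening moves match the paper's: you normalize (via commutation of restriction with mutations at subquiver vertices, plus Lemma~\ref{thm: ser}) so that $Q_{1234}$ is itself a series quiver, and you then try to pin down how $Q_5$ attaches. But your central contradiction does not work as stated. Lemma~\ref{thm: ser} asserts that a mutation-finite rank-$4$ quiver with highest denominator $d>5$ is \emph{mutation-equivalent} to a series quiver, not that it \emph{is} one; so exhibiting a rank-$4$ subquiver, say $Q_{1345}$, whose double arrow carries two satellites of equal weight is not, by itself, in conflict with that lemma. The stronger ``is a series quiver'' conclusion is only available by re-running the \emph{proof} of Lemma~\ref{thm: ser}, and that proof requires the quiver to contain the normalized cyclic triangle $(2,2\cos\frac{\pi}{d},2\cos\frac{\pi}{d})$: this applies to $Q_{1235}$ (it contains $Q_{123}$), but the offending subquiver $Q_{1345}$ contains only triangles of type $(2,2\cos\frac{n\pi}{d},2\cos\frac{n\pi}{d})$, so neither the statement nor the proof of Lemma~\ref{thm: ser} excludes it. Indeed, one checks that the Euclidean condition of Corollary~\ref{cor3}(4) is perfectly consistent with $Q_{1345}$ (it just forces the connecting arrow $Q_{45}$ to be marked $\frac{1}{d}$), so ruling it out needs either an invariant separating it from the mutation class of the series quivers --- which is exactly the standard-form analysis of Lemma~\ref{odd family} and Proposition~\ref{ser:classes}, developed later in the paper and not invoked by you --- or a direct proof of mutation-infiniteness. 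The paper takes the second route: in the odd case it determines the whole rank-$5$ quiver (your structure determination agrees with this part), applies the single mutation $\mu_5$, computes weights via Lemma~\ref{l-1,k}, and obtains an \emph{acyclic} rank-$3$ subquiver with a weight-$2$ arrow, forbidden by Corollary~\ref{cor3}(3); in the even case it lists four configurations, kills three by applying Lemma~\ref{no-ac-high} directly to the acyclic subquiver $Q_{2345}$ (no special treatment of $d=6,10$ is needed, since that lemma only uses the single high-denominator arrow $Q_{23}$), and kills the last by two explicit mutations producing an acyclic high-denominator triangle that is not Euclidean.

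Two further points. Your ``star'' case, where $Q_5$ is joined to exactly one endpoint of the double arrow, is vacuous: by Corollary~\ref{cor3}(3) a connected rank-$3$ subquiver containing a weight-$2$ arrow must be cyclic, so any vertex adjacent to one endpoint of $Q_{13}$ is adjacent to both. What actually needs handling is $Q_5$ joined to \emph{neither} endpoint (e.g.\ only to $Q_2$), which genuinely occurs for even $d$ --- the middle quiver of Fig.~\ref{ser} has its fourth vertex hanging off $Q_2$ alone --- and this same example shows your claim that $Q_5$ is always forced into the large-satellite slot is false in the even case. Finally, to apply Lemma~\ref{thm: ser} at all you must first take $d$ to be the highest denominator in the mutation class of $Q$, as the paper does. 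In short, the endgame of your main case rests on reading ``mutation-equivalent to a series quiver'' as ``equal to a series quiver,'' and repairing it requires either the explicit mutation computations the paper performs or the standard-form invariants of Section~\ref{4high}.
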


\begin{proof}
Suppose that $Q$ is a mutation-finite connected  rank $5$ quiver, and assume that $Q_{1234}$  is a connected subquiver containing an arrow  of weight   $2\cos \frac{\pi m}{d}$, $d>5$, $\gcd(m,d)=1$. We can assume that $d$ is the highest denominator in the mutation class of $Q$.
Then by Lemma~\ref{thm: ser}, $Q_{1234}$ is mutation-equivalent to one of the quivers in Fig.~\ref{ser} or its opposite. Without loss of generality we may assume that the subquiver $Q_{1234}$ of $Q$ itself is  one of the quivers in   Fig.~\ref{ser}. We consider these three series of quivers separately.

\medskip
\noindent
{\bf Case~1: Odd denominator $d=2n+1$.} Then $Q_{1234}$ is the subquiver shown in Fig.~\ref{ser} on the right (we can assume $Q_{13}$ is the double arrow and $Q_2$ is the vertex incident to two arrows marked $\frac{1}{d}$).
By reasoning as in Case~3 of the proof of  Lemma~\ref{thm: ser}, we see that the subquiver $Q_{1235}$ looks identical to $Q_{1234}$ modulo the direction of the arrow $Q_{25}$ which can point either way (see Fig.~\ref{5-odd}(a)~and~(b)). Applying  Corollary~\ref{cor3}(4) to acyclic subquiver $Q_{145}$ we see that the arrow $Q_{45}$ should be marked $\frac{1}{d}$. In the case shown in Fig.~\ref{5-odd}(b) we also see that the arrow $Q_{45}$ is directed from $Q_4$ to $Q_5$  (as the weights of arrows in the subquiver $Q_{245}$ require this subquiver to be acyclic); in the case shown in Fig.~\ref{5-odd}(a) the vertices $Q_4$ and $Q_5$ are completely symmetric, so we can also assume $Q_{45}$ is directed from $Q_4$ to $Q_5$. Therefore, the quiver $Q$ is one of the two quivers shown on the left of Fig.~\ref{5-odd}.
Applying mutation $\mu_5$ in vertex $Q_5$, we obtain the quiver $Q'=\mu_5(Q)$ shown on the right of Fig.~\ref{5-odd} (we use Lemma~\ref{l-1,k} to compute the new weights of arrows). However, the subquiver $Q_{234}'$ of $Q'$ is an acyclic subquiver
 with  arrow of weight $2$, which is impossible by   Corollary~\ref{cor3}(3).

\begin{figure}[!h]
\psfrag{a}{\small (a)}
\psfrag{b}{\small (b)}
\psfrag{1}{\scriptsize  $1$}
\psfrag{2}{\scriptsize  $2$}
\psfrag{3}{\scriptsize  $3$}
\psfrag{4}{\scriptsize  $4$}
\psfrag{5}{\scriptsize  $5$}
\psfrag{m5}{\scriptsize  $\mu_5$}
\psfrag{1k}{\scriptsize  $\frac{1}{d}$}
\psfrag{lk}{\scriptsize  $\frac{n}{d}$}
\psfrag{l-1}{\scriptsize  $\frac{n\!-\!1}{d}$}
\epsfig{file=./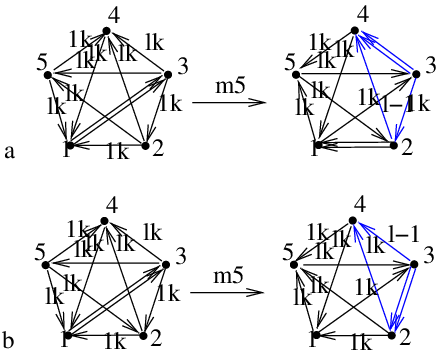,width=0.8\linewidth}
\caption{Quiver of rank $5$ with an arrow of weight $2\cos \frac{\pi m}{d}$ for odd $d=2n+1$.}
\label{5-odd}
\end{figure}

\medskip
\noindent
{\bf Case~2: Even denominator $d=2n$.}
In this case there are two possibilities for each of the subquivers $Q_{1234}$ and $Q_{1235}$ (see Fig.~\ref{ser}), which, up to symmetry and taking $Q\op$ (and sink/source mutations), gives rise to four forms of the quiver $Q$ shown in Fig.~\ref{5-even}. In each of the four possibilities the weight of the arrow $Q_{45}$ is determined uniquely from subquivers $Q_{145}$ or $Q_{245}$.

\begin{figure}[!h]
\psfrag{a}{\small  (a)}
\psfrag{b}{\small  (b)}
\psfrag{c}{\small  (c)}
\psfrag{d}{\small  (d)}
\psfrag{1}{\scriptsize  $1$}
\psfrag{2}{\scriptsize  $2$}
\psfrag{3}{\scriptsize  $3$}
\psfrag{4}{\scriptsize  $4$}
\psfrag{5}{\scriptsize  $5$}
\psfrag{1k}{\scriptsize  $\frac{1}{d}$}
\psfrag{2k}{\scriptsize  $\frac{2}{d}$}
\psfrag{lk}{\scriptsize  $\frac{n}{d}$}
\psfrag{l-1}{\scriptsize  $\frac{n\!-\!1}{d}$}
\epsfig{file=./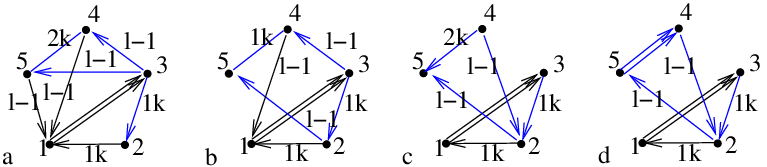,width=0.99\linewidth}
\caption{Quiver of rank $5$ with an arrow of weight $2\cos \frac{\pi m}{d}$ for even $d=2n$.}
\label{5-even}
\end{figure}

 Notice that in cases (a), (b) and (c) the subquiver $Q_{2345}$ is acyclic, having a vertex ($Q_2$, $Q_3$ and $Q_3$  in the three cases respectively) which is not joined with $Q_4$ and incident to the arrow $Q_{23}$ of weight $2\cos \frac{\pi}{d}$, $d>5$. So, by Lemma~\ref{no-ac-high} $Q_{2345}$ (and, hence, $Q$) is mutation-infinite.

We are left to consider the case (d). Applying mutations in vertices $Q_2$ and the $Q_1$, we obtain the quiver $Q'$ shown in Fig.~\ref{d}. Its subquiver $Q'_{245}$ is acyclic, has a denominator $d>5$ arrow $Q'_{25}$, but does not correspond to a Euclidean triangle, so it is mutation-infinite.

\begin{figure}[!h]
\psfrag{1}{\scriptsize  $1$}
\psfrag{2}{\scriptsize  $2$}
\psfrag{3}{\scriptsize  $3$}
\psfrag{4}{\scriptsize  $4$}
\psfrag{5}{\scriptsize  $5$}
\psfrag{1k}{\scriptsize  $\frac{1}{d}$}
\psfrag{2k}{\scriptsize  $\frac{2}{d}$}
\psfrag{lk}{\scriptsize  $\frac{n}{d}$}
\psfrag{l-1}{\scriptsize  $\frac{n\!-\!1}{d}$}
\psfrag{l-2}{\scriptsize  $\frac{n\!-\!2}{d}$}
\epsfig{file=./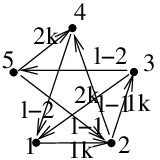,width=0.31\linewidth}
\caption{A mutation of the quiver shown in Fig.~\ref{5-even}(d).}
\label{d}
\end{figure}

\end{proof}

\begin{remark}
In view of  Theorem~\ref{no big}, in rank $5$ and higher we  only need to consider the quivers with arrows marked $\frac{p}{d}$, $p<d\le 5$. This will be done in Section~\ref{low denominator}.

\end{remark}

\section{Low denominator quivers}
\label{low denominator}

By {\it low denominator} quivers we mean quivers without arrows marked $\frac{m}{d}$, where $m\le d/2$, $\gcd(m,d)=1$  and $d> 5$. 
There are finitely many of low denominator quivers in each rank, so one can classify mutation-finite  low denominator quivers of small ranks checking them case by case. 



%

\subsection{Denominator $4$ mutation classes}

For every skew-symmetrizable integer matrix $B$ one can construct a skew-symmetrization $B'$ of it by putting $b'_{ij}=\sgn\,{b_{ij}}\sqrt{-b_{ij}b_{ji}}$. Matrix $B'$ gives rise to a (possibly non-integer) quiver $Q'$ whose mutations agree with mutations of the diagram of $B$ (see~\cite{FZ2}). Notice that not every non-integer denominator $4$ quiver corresponds to a diagram of an integer skew-symmetrizable matrix: to have a corresponding skew-symmetrizable matrix the number of arrows of weight $\sqrt{2}$ in every (not obligatory oriented) cycle must be even (cf.~\cite[Exercise 2.1]{K}). However, it is easy to check that any chordless cycle with odd number of arrows of weight $\sqrt{2}$ is mutation-infinite, and thus we can conclude that  
the finite mutation classes of denominator $4$ quivers are the same as the ones described in~\cite{FeSTu2}.

\begin{remark}
Denominator $3$ and $2$ quivers are actually integer, so we do not need to consider them.

\end{remark}

\begin{cor}
\label{b}
Any mutation-finite quiver with highest denominator $4$  is mutation-equivalent to  a symmetrization of one of the integer diagrams, i.e. either it arises from a triangulated orbifold or is one of the exceptional quivers listed in Fig.~\ref{4} (we call these {\em $F$-type quivers}).
   
\end{cor}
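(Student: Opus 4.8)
The plan is to reduce Corollary~\ref{b} to the combination of the preceding discussion about skew-symmetrizations and the classification of mutation-finite integer diagrams in~\cite{FeSTu2}. The setup of the subsection has already established that denominator~$4$ quivers are precisely skew-symmetrizations of diagrams of integer skew-symmetrizable matrices: a weight of $\sqrt 2=2\cos(\pi/4)$ is exactly what appears when one symmetrizes an arrow recorded as $b_{ij}b_{ji}=-2$ in a skew-symmetrizable matrix. So the first step is to verify that any denominator~$4$ quiver $Q$ with $Q$ mutation-finite actually \emph{does} arise from a genuine integer skew-symmetrizable matrix $B$, i.e. that the obstruction mentioned just above (chordless cycles carrying an odd number of $\sqrt 2$-arrows) cannot occur in a mutation-finite quiver. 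This is asserted in the text as ``easy to check,'' and I would confirm it by taking a chordless cycle with an odd number of $\sqrt2$-weighted arrows and showing directly — via Corollary~\ref{cor3}, since any rank~$3$ chordless subcycle must be spherical or Euclidean — that such a cycle forces a weight or angle configuration incompatible with finite mutation type.

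Granting that $Q$ is the skew-symmetrization $Q'$ of the diagram of some integer skew-symmetrizable matrix $B$, the second step invokes the compatibility of mutations recorded earlier: by~\cite{FZ2}, mutations of $Q'$ agree with mutations of the diagram of $B$. Hence $Q$ is mutation-finite if and only if the diagram of $B$ is mutation-finite as an integer diagram. This transfers the problem entirely into the integer category, where the classification is already known.

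The third step is simply to quote the classification of mutation-finite skew-symmetrizable integer diagrams from~\cite{FeSTu2}: every such diagram either arises from a triangulated orbifold or belongs to the finite list of exceptional diagrams. The orbifold diagrams skew-symmetrize to quivers arising from triangulated orbifolds (the correspondence of~\cite{FeSTu3} referenced in the introduction), and the exceptional integer diagrams with highest denominator~$4$ skew-symmetrize precisely to the $F$-type quivers displayed in Fig.~\ref{4}. Matching the exceptional list in~\cite{FeSTu2} against Fig.~\ref{4} completes the argument, since the definition of highest denominator~$4$ picks out exactly those diagrams in which the only non-simply-laced bonds are the weight-$\sqrt2$ (i.e.\ ``value $2$'') arrows and no weight involves a denominator exceeding~$4$.

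The main obstacle I expect is the first step — ruling out chordless cycles with an odd number of $\sqrt2$-arrows — because it is the only part not handed to us by a cited theorem, and one must be careful that ``chordless'' is the right hypothesis: a non-chordless cycle could in principle hide the parity obstruction behind additional arrows, so the argument has to show that the parity constraint propagates to \emph{every} chordless cycle and that violating it in any one of them already yields a mutation-infinite rank~$3$ or rank~$4$ subquiver via Corollary~\ref{cor3}. Everything after that is a bookkeeping match between the two classification lists, which I would treat as routine.
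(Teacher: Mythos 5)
Your proposal is correct and follows essentially the paper's own route: the paper likewise derives Corollary~\ref{b} from the parity criterion for a denominator-$4$ quiver to be a skew-symmetrization of an integer skew-symmetrizable matrix, the claim (stated there as ``easy to check'') that a chordless cycle with an odd number of weight-$\sqrt2$ arrows is mutation-infinite, the mutation-compatibility of skew-symmetrization from~\cite{FZ2}, and the classification of~\cite{FeSTu2}. One caution on the only step you must supply yourself: for a chordless cycle of length at least $5$ the contradiction does not sit inside a rank~$3$ or rank~$4$ subquiver of the cycle (all proper subquivers of a chordless cycle are paths, hence generally mutation-finite), so you should first mutate at a vertex of the cycle that is neither sink nor source, check that this shortens the cycle while preserving both chordlessness and the parity of $\sqrt2$-arrows, and induct down to length $3$, where Corollary~\ref{cor3} gives the mutation-infiniteness directly.
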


\begin{figure}[!h]
\psfrag{4}{\small  $\frac{1}{4}$}
\psfrag{6}{\small  $\theta_6$}
\psfrag{a}{\small  $F_4$}
\psfrag{b}{\small  $\widetilde F_4$}
\psfrag{c}{\small  $F_4^{(*,+)}$}
\psfrag{d}{\small  $F_4^{(*,*)}$}
\psfrag{e}{\small  $G_2^{(*,+)}$}
\psfrag{f}{\small  $G_2^{(*,*)}$}
\epsfig{file=./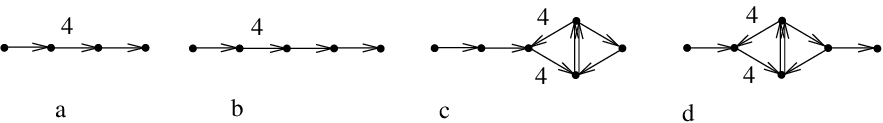,width=0.993\linewidth}
\caption{Exceptional denominator 4 quivers}
\label{4}
\end{figure}

\begin{remark}
Notice that the diagrams  $G_2^{(*,+)}$ and  $G_2^{(*,*)}$ from classification in~\cite{FeSTu2} correspond to denominator $6$ quivers and arise as elements of series shown in Fig.~\ref{ser} for $n=3$.
  
\end{remark}  

\newpage
\subsection{Denominator $5$: separating $4$ from $5$}

\begin{prop}
\label{separate 4 5}
Let $Q$ be a quiver of finite mutation type with the highest denominator $d=5$ in the mutation class.
Then no quiver in the mutation class of $Q$ contains  a denominator $4$ arrow. 

\end{prop}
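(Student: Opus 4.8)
The plan is to show that a denominator-$5$ and a denominator-$4$ arrow cannot coexist in a mutation-finite quiver by producing a forbidden rank-$3$ subquiver. Weights of denominator $5$ are $2\cos\frac{\pi}{5}$ and $2\cos\frac{2\pi}{5}$, which lie in $\Z[\frac{1+\sqrt5}{2}]$, while a denominator-$4$ weight $\sqrt2=2\cos\frac\pi4$ lies in $\Z[\sqrt2]$. These two fields meet only in $\Q$, so any rank-$3$ subquiver that simultaneously involves both a $\sqrt2$-arrow and a denominator-$5$ arrow would have to be consistent with the geometric constraints of Corollary~\ref{cor3}(4)--(5): its three angles $\frac{m}{d},\frac{s}{d},\frac{t}{d}$ must share a common denominator $d$ and satisfy the spherical/Euclidean triangle equality. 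The arithmetic incompatibility of $\sqrt2$ and $\cos(\pi/5)$ should rule out any such common-denominator triangle.

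First I would reduce to the local situation: since the mutation class contains both a denominator-$5$ arrow (by hypothesis $d=5$ is the highest denominator, and it is attained) and, for contradiction, a denominator-$4$ arrow, connectivity lets me find within a single quiver $Q'$ in the class a vertex where a $\sqrt2$-arrow and a denominator-$5$ arrow meet, or are separated by one vertex. The key step is then to run through the rank-$3$ subquivers containing such a pair and invoke Theorem~\ref{finite-thm}: a connected mutation-finite rank-$3$ quiver must be mutation-equivalent to one of the explicit list, and inspection shows no quiver on that list has both a $\sqrt2$ weight and a $2\cos\frac{\pi}{5}$ or $2\cos\frac{2\pi}{5}$ weight. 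So the offending pair cannot sit in a common connected rank-$3$ subquiver, which forces the $\sqrt2$-arrow and every denominator-$5$ arrow to be nonadjacent (share no triangle).

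Next I would propagate this non-adjacency to a global contradiction. Having a denominator-$4$ arrow and a denominator-$5$ arrow that never share a triangle, I would mutate so as to bring them closer: since the quiver is connected, a shortest path between the $\sqrt2$-edge and a denominator-$5$ edge can be shortened by a suitable mutation unless blocked, and the blocking configurations are exactly the rank-$3$ (or rank-$4$) subquivers already excluded or excludable by Corollary~\ref{cor3} and Lemma~\ref{no-ac-high}. The cleanest route is probably to show that the set of denominator-$5$ weights is preserved under mutation in a way incompatible with the $\sqrt2$ weights: because mutation formulas express new weights as $\Z$-polynomial combinations of old weights within a cyclic triangle, a weight in $\Z[\sqrt2]$ can never be converted into one in $\Z[\frac{1+\sqrt5}{2}]\setminus\Q$ through any triangle, so the two ``flavours'' of arrows live in disjoint, mutation-invariant components of the weight bookkeeping, contradicting connectivity once one checks that a connected quiver forces interaction.

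The main obstacle, I expect, is the bookkeeping in the propagation step: ensuring that every configuration in which a $\sqrt2$-arrow and a denominator-$5$-arrow are one vertex apart can actually be mutated into one where they share a triangle (so that the rank-$3$ classification bites), rather than getting stuck in a sequence of sink/source moves. Handling the sink/source case is routine in the style of Lemma~\ref{no-ac-high}, so the real content is checking the finitely many adjacency patterns of a mixed rank-$4$ quiver $Q_{ijk\ell}$ with one $\sqrt2$-edge and one denominator-$5$-edge and verifying each is mutation-infinite via the Euclidean/spherical triangle obstruction. I would organize this as a short case analysis on how the two special edges are positioned (adjacent, opposite, or sharing a vertex), reducing each to an already-forbidden rank-$3$ subquiver.
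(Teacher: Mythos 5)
Your opening moves match the paper's: the field observation that $\sqrt 2\notin\Q(\sqrt 5)$ (so denominator-$4$ arrows persist throughout the mutation class once present), and the fact, read off from Theorem~\ref{finite-thm}, that no connected mutation-finite rank-$3$ quiver carries both a $\sqrt 2$-weight and a denominator-$5$ weight. The genuine gap is in the reduction from arbitrary rank to this local picture. You claim that connectivity lets you find, inside a single quiver of the class, a $\sqrt2$-arrow and a denominator-$5$ arrow that meet or are separated by one vertex; this is false: connectivity only provides a path between them, and that path can be arbitrarily long. A full rank-$4$ subquiver containing two far-apart special arrows is then disconnected, and a disconnected quiver with a $\sqrt2$-arrow in one component and a denominator-$5$ arrow in the other is subject to no obstruction whatsoever. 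Your fallback argument --- that weights in $\Z[\sqrt2]$ and $\Z[(1+\sqrt5)/2]$ form disjoint mutation-invariant ``flavours,'' contradicting connectivity --- yields no contradiction either: nothing prevents a connected quiver from carrying both flavours at a distance (that coexistence is exactly what must be ruled out, and it cannot be ruled out by arithmetic alone). Consequently your case analysis of mixed rank-$4$ quivers, even if carried out in full, proves the proposition only for rank $4$; it does not close the argument for rank $\ge 5$.

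What is missing is a descent mechanism, and that is the actual content of the paper's proof. One takes a counterexample $Q$ of \emph{minimal rank} $n$ (minimality is available because full subquivers of mutation-finite quivers are mutation-finite, and because, by your own field observation, \emph{every} quiver in the class contains both denominators). Minimality combined with a shortest-path argument forces $Q$ to be essentially a linear graph: the denominator-$4$ arrow $Q_{12}$ at one end, the denominator-$5$ arrow $Q_{n-1,n}$ at the other, all intermediate arrows $Q_{i,i+1}$ of weight $1$ (weight $2$ and any chord $Q_{13}$ are excluded by the rank-$3$ classification together with minimality). Then, after a sink/source adjustment at $Q_1$, a single mutation $\mu_2$ creates a denominator-$4$ arrow $Q_{13}'$ without touching the rest of the quiver, and the full subquiver on all vertices except $Q_2$ has rank $n-1$ and still contains both a denominator-$4$ and a denominator-$5$ arrow --- contradicting minimality. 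Your proposal gestures at ``shortening the path by mutation unless blocked,'' which is the right instinct, but you never identify this descent step; without it the propagation from local to global does not go through.
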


\begin{proof}

If some quiver in the mutation class of $Q$ does not contain arrows with denominators $4$, then the whole mutation class has no such arrows: this is immediate from the mutation rule (as $\sqrt 2\notin \Q(\sqrt 5)$).  
Therefore, we can assume that every quiver in the mutation class of $Q$ contains both denominator $5$ and denominator $4$ arrows.
Without loss of generality we can also assume that $Q$ is of smallest possible rank with this property.
Let $n$ be the rank of $Q$.
In view of classification of mutation-finite rank $3$ quivers we see that  $n\ge 4$.

Suppose that $Q_{n,n-1}$  is a denominator $5$ arrow. By the minimality of $Q$, no of the  arrows $Q_{i,n}$ has denominator $4$. This implies that a denominator $4$ arrow is contained in  $Q_{1,\dots,n-2}$.
Without loss of generality we can assume that the arrow $Q_{12}$ has denominator $4$.

Consider the shortest path $\mathcal P$ connecting (one of the endpoints of) $Q_{12}$ to (one of the endpoints of) 
$Q_{n-1,n}$, we can assume that $\mathcal P$ connects $Q_2$ to $Q_{n-1}$. Since $Q$ is minimal and $\mathcal P$ is shortest, the support of $\mathcal P$ coincides with $Q_{2,\dots,n-1}$ and is a linear graph containing  all vertices of $Q$ except for $Q_1$ and $Q_n$. Thus, we can assume that the subquiver   $Q_{2,\dots,n-1}$ only contains arrows $Q_{i,i+1}$, and each of these arrows is of weight $1$ or $2$.
Furthermore, besides the arrows in $Q_{2,\dots,n-1}$, denominator $4$ arrow $Q_{12}$  and denominator $5$ arrow $Q_{n,n+1}$, the quiver $Q$ may only contain two other  arrows: $Q_{13}$ and $Q_{n-2,n}$. 

Notice that $Q_{13}$ cannot have denominator $4$ as this would contradict the minimality of $Q$. Also, $Q_{13}$ cannot have weight $1$ or $2$, as in that cases the subquiver $Q_{123}$ would not be mutation-finite.
Thus, there is no arrow between vertices $Q_1$ and $Q_3$. If $Q_{23}$ has weight $2$ then $Q_{123}$ is already mutation-infinite, so we can assume that $Q_{23}$ has weight $1$.
Applying (if needed) mutation $\mu_1$ we can assume that $Q_2$ is neither a sink nor a source, so applying mutation $\mu_2$ we will create a denominator $4$ arrow $Q_{13}'$
 (and this will not affect the rest of the quiver). The subquiver spanned by all vertices but $Q_2$ will now contain both denominator $4$ and denominator $5$ arrows, which contradicts the minimality of $Q$.



\end{proof}

\subsection{Denominator $5$ mutation classes}
In this section we classify denominator $5$ mutation classes (i.e. low denominator quivers containing arrows marked $\frac{1}{5}$ or $\frac{2}{5}$). In view of Proposition~\ref{separate 4 5}, such a quiver only contains arrows marked $\frac{1}{2}$ (such arrows are absent),  $\frac{1}{3}$ (simple arrows),  $\frac{1}{5}$,  $\frac{2}{5}$ and $2$ (double arrows).

The classification can be now achieved by a short (computer assisted) case-by-case study which we organize as follows. 

All rank $3$ mutation-finite  classes are listed in Theorem~\ref{finite-thm} (there are only $3$ mutation classes containing arrows of denominator $5$). 
The fourth vertex may be added to a representative of each of these $3$ mutation classes in $9^3$ ways.
Most of the obtained quivers are mutation-infinite, so this will produce $8$  mutation-finite classes listed in the left and middle columns of Fig.~\ref{5}.
Then one can add the fifth vertex to get two mutation classes of rank $5$.
Adding the sixth vertex we get exactly one mutation class, while adding one more vertex to that one does not give any new mutation-finite quivers.

We can now summarize the results of the computation described above.

\begin{theorem}
\label{thm5}
A denominator $5$ quiver of finite mutation type is mutation-equivalent to one of the quivers listed in Table~\ref{5}. 
\end{theorem}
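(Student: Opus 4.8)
The plan is to prove the statement by an exhaustive induction on the rank, building mutation-finite denominator~$5$ quivers one vertex at a time and deciding each candidate by a procedure that provably terminates. The structural input that makes this finite is that, for a finite-mutation denominator~$5$ quiver, every weight occurring \emph{anywhere} in the mutation class lies in the fixed finite alphabet
\[
\mathcal{W}=\{\,0,\;1,\;2\cos\tfrac{\pi}{5},\;2\cos\tfrac{2\pi}{5},\;2\,\}.
\]
Indeed, by Corollary~\ref{cor3}(1) every weight has the form $2\cos\frac{\pi m}{d}$ with $d\le 5$, and by Proposition~\ref{separate 4 5} no denominator~$4$ arrow can occur; since all weights stay in $\Q(\sqrt5)$ under mutation (as $\sqrt2\notin\Q(\sqrt5)$), the surviving possibilities are exactly $2\cos\frac{\pi}{2}=0$, $2\cos\frac{\pi}{3}=1$, the double-arrow weight $2$, and the two denominator~$5$ values, so $\mathcal{W}$ exhausts them.

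Next I would record the two facts that drive the induction. First, mutation-finiteness is \emph{hereditary}: mutation at a vertex $k$ inside a subset $S\ni k$ commutes with restriction to $S$, so every induced subquiver of a mutation-finite quiver is again mutation-finite. Second, this reduces the rank-$n$ classification to rank $n-1$: given a mutation-finite rank-$n$ quiver $Q$, deleting a vertex yields a mutation-finite rank-$(n-1)$ quiver, which a sequence $\mu$ of mutations (none of them at the deleted vertex) carries to a chosen representative $R$ of its class; applying the same $\mu$ to $Q$ gives a mutation-equivalent quiver whose restriction is exactly $R$. Hence every rank-$n$ mutation class has a representative obtained by attaching one new vertex to a fixed representative of a rank-$(n-1)$ class. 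Since the new vertex joins each old vertex by an arrow that is either absent or carries one of the four weights $1,\,2\cos\frac{\pi}{5},\,2\cos\frac{2\pi}{5},\,2$ in one of two directions, there are exactly $1+4\cdot 2=9$ choices per old vertex, i.e. $9^{\,n-1}$ extensions to test.

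The core is then a terminating decision procedure for each extension. Starting from a candidate whose weights lie in $\mathcal{W}$, I would run breadth-first search on its mutation class, performing all weight computations exactly in $\Q(\sqrt5)$. If some mutation produces a weight outside $\mathcal{W}$ — which forces a rank-$3$ subquiver that is not of finite mutation type by Corollary~\ref{cor3} and Theorem~\ref{finite-thm}, so that $Q$ is mutation-infinite by heredity — the candidate is discarded and the search stops. Otherwise every quiver encountered lies in the finite set of quivers on $n$ vertices with weights in $\mathcal{W}$, so the search closes up after finitely many steps and exhibits the class explicitly as a finite, mutation-closed family, proving mutation-finiteness. Carried out rank by rank — the three rank-$3$ denominator-$5$ classes of Theorem~\ref{finite-thm} as base case, then the $9^3$ rank-$4$ extensions yielding the $8$ listed classes, their rank-$5$ extensions yielding $2$ classes, the rank-$6$ extensions yielding a single class, and the verification that no rank-$7$ extension survives — this produces exactly the list of Table~\ref{5}.

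The main obstacle is not any individual finiteness test (the alphabet argument guarantees each breadth-first search terminates and is rigorous) but organizing the computation so that it is provably \emph{complete} and non-redundant: one must confirm that a single representative per lower-rank class genuinely suffices (this is what the reduction of the second paragraph legitimizes), that all $9^{\,n-1}$ attachments have been enumerated, and that distinct extensions landing in a common mutation class are correctly merged by comparing the closed classes produced. Lemma~\ref{no markov} (no proper Markov subquiver) and the rank-$3$ constraints of Corollary~\ref{cor3} serve throughout as cheap pruning tests that eliminate the overwhelming majority of candidates before any mutation is performed, which is what keeps the case analysis tractable.
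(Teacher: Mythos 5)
Your proposal follows essentially the same route as the paper: restrict the weight alphabet via Proposition~\ref{separate 4 5}, start from the three rank-$3$ denominator-$5$ classes of Theorem~\ref{finite-thm}, attach a new vertex to a class representative in $9^{\,n-1}$ ways, and decide each candidate by a computer-assisted search of its mutation class, obtaining $8$ classes in rank $4$, then $2$, then $1$, then none. The paper states this procedure quite tersely, whereas you spell out the supporting details it leaves implicit (heredity of mutation-finiteness, the one-representative-per-class reduction, and termination of the search via closure of weights in $\Q(\sqrt5)$), all of which are correct.
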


\begin{figure}[!h]
\psfrag{15}{\small  $\frac{1}{5}$}
\psfrag{25}{\small  $\frac{2}{5}$}
\epsfig{file=./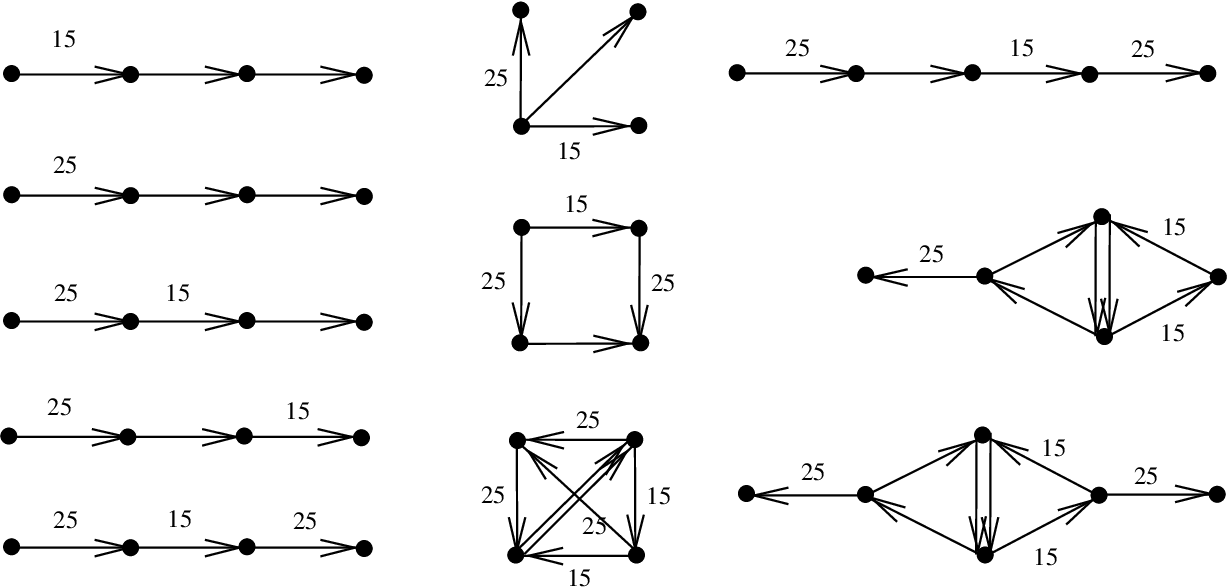,width=0.993\linewidth}
\caption{Denominator $5$ quivers of finite mutation type}
\label{5}
\end{figure}

%


\section{Rank $4$ quivers with high denominators}
\label{4high}
In view of Theorems~\ref{no big} and~\ref{thm5}  we are left  to classify mutation-finite quivers of rank $4$ with  the largest denominator $d> 5$.  By Lemma~\ref{thm: ser} every such quiver is mutation-equivalent to one of the quivers shown in Fig.~\ref{ser}. In other words, we are left to study three infinite series of rank $4$ quivers. Below, we show that each mutation class in these three families is mutation-finite.

Note that all three series in Fig.~\ref{ser} are infinite (as $n\in \{2,3,\dots\}$), and computing the mutations classes for relatively small $n$ one can observe the size of the mutation classes grows with $n$. We will show by induction that all quivers in each of these mutation classes satisfy certain conditions, which will imply mutation-finiteness as the conditions describe a finite set of quivers for every given $n$. The three types of quivers shown in Fig.~\ref{ser} will be treated separately (but in a very similar way).

\begin{lemma}
\label{odd family}
The quiver shown on the right of Fig.~\ref{ser} is mutation-finite for every $n\in\{2,3,\dots \}$.

\end{lemma}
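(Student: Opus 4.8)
The plan is to exhibit, for each fixed $n\ge 2$, an explicit \emph{finite} family $\mathcal L_n$ of rank $4$ quivers that contains the quiver $Q=\widetilde G_{2,2n+1}^{(*,*)}$ shown on the right of Fig.~\ref{ser} and such that the whole mutation class of $Q$ stays inside $\mathcal L_n$; mutation-finiteness is then immediate. Writing $d=2n+1$, note that every arrow of $Q$ carries weight $2$ or $2\cos\frac{\pi k}{d}$ with $0\le k\le n$. As a first approximation I would take $\mathcal L_n$ to be the set of all rank $4$ quivers whose arrows have weights in the finite set $W_n=\{0,2\}\cup\{2\cos\frac{\pi k}{d}:0\le k\le n\}$ and all of whose rank $3$ subquivers are mutation-finite (equivalently, satisfy the constraints of Corollary~\ref{cor3}); the genuine conditions may have to be sharpened to exact combinatorial shapes, read off from the mutation classes for small $n$, so that the family is truly closed. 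In any case, since there are only finitely many ways to assign a weight from $W_n$ and an orientation to each of the six possible arrows on four vertices, $\mathcal L_n$ is finite, and it suffices to prove the mutation class of $Q$ is contained in it.

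I would prove this containment by induction on the length of a mutation path starting at $Q$. The base case holds because each rank $3$ subquiver of $Q$ is mutation-equivalent to one of the finite-type triangles of Theorem~\ref{finite-thm}: indeed $Q_{124}$, $Q_{134}$, $Q_{234}$ are the Euclidean triangles with angles $(\frac{\pi}{d},\frac{\pi n}{d},\frac{\pi n}{d})$ of Lemma~\ref{l-1,k}, while $Q_{123}$ is $(2,2\cos\frac{\pi}{d},2\cos\frac{\pi}{d})$. For the inductive step, let $Q'\in\mathcal L_n$ lie in the mutation class and let $\mu_v$ be a mutation at a vertex $v$. The new weight of any arrow $Q'_{ij}$ with $i,j\ne v$ is determined entirely within the rank $3$ subquiver $Q'_{ivj}$, which is mutation-finite by hypothesis, while arrows incident to $v$ only reverse; so controlling $\mu_v(Q')$ reduces to controlling finitely many rank $3$ mutations. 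Here Corollary~\ref{cor3} forces each such triangle to be spherical or Euclidean with angles that are multiples of $\frac{\pi}{d}$, so a mutated angle is again of the form $\frac{\pi k}{d}$ and no weight leaves $W_n$; and Lemma~\ref{l-1,k} records explicitly the two decisive transformations of the triangles carrying the double arrow and the denominator-$d$ arrows of this series.

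Completing the step is then a finite case analysis: one enumerates the shapes of $Q'$ permitted by the rank $3$ constraints, and for each vertex $v$ computes $\mu_v(Q')$ and checks that its weights again lie in $W_n$ \emph{and} that all of its rank $3$ subquivers remain mutation-finite (the latter being nontrivial only for the single triangle not containing $v$). The collection of shapes that actually occur is bounded independently of $n$ — only the labels $\frac{k}{d}$ vary — so the verification is uniform in $n$, and by the remark following Lemma~\ref{l-1,k} it can equally be carried out geometrically, each mutation being a partial reflection of the acute-angled Euclidean triangle with angles $(\frac{\pi}{d},\frac{\pi n}{d},\frac{\pi n}{d})$; these reflections visibly permute a finite family of root directions. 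Once closure is checked, the mutation class of $Q$ is contained in the finite set $\mathcal L_n$ and hence is finite.

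The hard part will be the inductive/closure step rather than its conclusion: one must guarantee that no mutation ever creates a weight outside $W_n$ — in particular that the denominator never exceeds $d=2n+1$ and that no new irrational weight appears — while simultaneously preserving mutation-finiteness of every rank $3$ subquiver, so that the induction can be continued. Pinning this down is exactly where the explicit transformations of Lemma~\ref{l-1,k}, the angle constraints of Corollary~\ref{cor3}, and the (partly computer-assisted) bookkeeping of the finitely many occurring shapes are needed.
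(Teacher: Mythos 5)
Your overall strategy coincides with the paper's: exhibit a finite family of rank~$4$ quivers containing the initial quiver and closed under mutation, and prove containment by induction on the number of mutations. However, there is a genuine gap, and it sits exactly where you park the difficulty. Your ``first approximation'' $\mathcal L_n$ (weights in $W_n$ and all rank~$3$ subquivers mutation-finite) cannot serve as the inductive hypothesis: mutation-finiteness of all rank~$3$ subquivers is a necessary but not sufficient condition in rank~$4$, and it is not preserved by mutation. This is precisely the phenomenon exploited in Lemma~\ref{no-ac-high} and in Case~(d) of the proof of Theorem~\ref{no big}: there are rank~$4$ quivers all of whose triangles are mutation-finite for which a single mutation produces a mutation-infinite triangle --- the triangle avoiding the mutated vertex, which you correctly identify as the only nontrivial check but never actually control. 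You acknowledge that the conditions ``may have to be sharpened to exact combinatorial shapes,'' but that sharpening \emph{is} the proof: the paper's argument consists of writing every quiver in the class in the standard form of Fig.~\ref{standard} with four parameters $k,q,m,s$ subject to the explicit arithmetic conditions (1)--(4) (e.g.\ $k+q\in\{n,n+1\}$, $k>\frac n2\ge q$, $s+m+k+q=2n+1$, $q\le s,m\le n-q$), and then verifying by hand, in four cases and using the $Q\mapsto Q\op$ symmetry to reduce to the mutations $\mu_1,\mu_2$, that these conditions propagate. Without an invariant of this strength the induction simply does not close.

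Two further points. First, your claim that ``the collection of shapes that actually occur is bounded independently of $n$ --- only the labels vary --- so the verification is uniform in $n$'' is an assertion of exactly the missing content; it cannot be established by computing mutation classes for small $n$ (the paper notes the class sizes grow with $n$, so finitely many computer checks prove nothing for all $n$), which is why the paper needs the parametric, inequality-based argument rather than the computer verification it uses elsewhere. Likewise the geometric aside that the partial reflections ``visibly permute a finite family of root directions'' is unjustified: this mutation class is of extended affine type, the associated reflection group is infinite, and the orbit of roots is infinite; finiteness of the class is a consequence of the combinatorial bound, not something visible from the reflections. Second, a small error in your base case: the initial quiver has \emph{two} Euclidean triangles with angles $\bigl(\frac{\pi}{d},\frac{\pi n}{d},\frac{\pi n}{d}\bigr)$ and \emph{two} triangles carrying the double arrow, namely $\bigl(2,2\cos\frac{\pi}{d},2\cos\frac{\pi}{d}\bigr)$ and $\bigl(2,2\cos\frac{\pi n}{d},2\cos\frac{\pi n}{d}\bigr)$, not three and one as you state.
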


\begin{proof}
We will show by induction (on the number of mutations applied) that every quiver in the mutation class can be presented in a standard  form shown in
Fig.~\ref{standard} with some parameters $k,q,m,s\in \{0,1,2,\dots n \}$ satisfying the following conditions:

\begin{itemize}
\item[(1)] $k+q\in\{n,n+1\}$;
\item[(2)] $k>\frac{n}{2}\ge q$;
\item[(3)] $s+m+k+q=2n+1$; 
\item[(4)] $q\le s,m \le n-q$ \ \ and \ \ $0<s,m$.

\end{itemize}
The mutation-finiteness then follows immediately.

\begin{figure}[!h]
\psfrag{11}{\scriptsize  $1$}
\psfrag{2}{\scriptsize  $2$}
\psfrag{3}{\scriptsize  $3$}
\psfrag{4}{\scriptsize  $4$}
\psfrag{k}{\normalsize  $k$}
\psfrag{q}{\normalsize $q$}
\psfrag{m}{\normalsize  $m$}
\psfrag{s}{\normalsize $s$}
\psfrag{m+q}{\normalsize  $m+q$}
\psfrag{s+q}{\normalsize $s+q$}
\quad\epsfig{file=./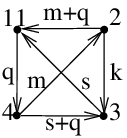,width=0.31\linewidth}
\caption{Standard form for quivers in the mutation classes shown in  Fig.~\ref{ser}. We label the arrow of weight $2\cos \frac{k\pi}{d}$ by $k$ (with $d=2n+1$ or $d=2n$ for all arrows).  }
\label{standard}
\end{figure}

\bigskip

\noindent
{\bf Base of induction.} Reordering the vertices, one can redraw the quiver shown on the right of  Fig.~\ref{ser} as in 
Fig.~\ref{standard-base3}. In this case $q=0$, $k=m=n$ and $s=1$, which clearly satisfies conditions  $(1)$--$(4)$.
\begin{figure}[!h]
\psfrag{11}{\scriptsize  $1$}
\psfrag{2}{\scriptsize  $2$}
\psfrag{3}{\scriptsize  $3$}
\psfrag{4}{\scriptsize  $4$}
\psfrag{II}{\small  $I\!I$}
\psfrag{1}{\small $\frac{1}{2n}$}
\psfrag{n-1}{\small $\frac{n-1}{2n}$}
\psfrag{1_}{\small $\frac{1}{2n+1}$}
\psfrag{n}{\small $\frac{n}{2n+1}$}
\quad\epsfig{file=./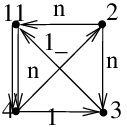,width=0.3\linewidth}
\caption{Base of induction: the quiver from the right of Fig.~\ref{ser} in the standard form.   }
\label{standard-base3}
\end{figure}

\bigskip
\noindent
{\bf Step of induction.} Our aim is now to show that the class of quivers described in Fig.~\ref{standard} with the conditions (1)--(4) is closed under mutations. {\it A priori}, we need to check four mutations for that  (one mutation in each of the four directions). However, taking into account the symmetry of the conditions above and considering the quivers up to the opposite allows us to reduce the work to checking the two mutations in the two vertices $Q_1$ and $Q_2$, see  Fig.~\ref{standard}.
Indeed, taking $Q$ to $Q\op$ and renumbering vertices according to permutation $(14)(23)$ results in the same quiver $Q$ with the label $m$ swapped with $s$ (and $m+q$ swapped with $s+q$). Now observe that taking the opposite commutes with mutations, and $Q\op$ satisfies the conditions (1)--(4) if and only if $Q$ does. Therefore, checking the mutation in, say, $Q_3$ is equivalent to checking the mutation in $Q_1$.

\medskip
\noindent
{\bf 1. Mutation in $Q_1$.} We will first check the mutation $\mu_1(Q)$. Depending on various values of $k,q,m,s$ and $n$, the quiver obtained is of one of the two forms shown in Fig.~\ref{mu_1} (in the figure we first show the mutation and then redraw the same quiver in the standard form).  In computing the new weights of arrows we apply parts (4) and (5) of  Corollary~\ref{cor3} and use the assumption  $s+m+k+q=2n+1$.  
Notice also that we obtain a weight $s-q$ (and not $q-s$) as $s\ge q$ in view of assumption (4).

\begin{figure}[!h]
\psfrag{k}{\normalsize  $k$}
\psfrag{q}{\normalsize $q$}
\psfrag{m}{\normalsize  $m$}
\psfrag{s}{\normalsize $s$}
\psfrag{m+q}{\small  $m+q$}
\psfrag{m+q.}{\scriptsize  $m+q$}
\psfrag{s+q}{\normalsize $s+q$}
\psfrag{m+2q>=n+1}{\normalsize  $m+2q\ge n+1$}
\psfrag{m+2q<=n}{\normalsize $m+2q\le n$}
\psfrag{mu1}{\normalsize  $\mu_1$}
\psfrag{s+k-q}{\small $s\!+\!k\!-\!q$}
\psfrag{s+k-q.}{\scriptsize $s\!+\!k\!-\!q$}
\psfrag{m+2q}{\small  $m+2q$}
\psfrag{m+2q.}{\scriptsize  $m+2q$}
\psfrag{s-q}{\small $s\!-\!q$}
\psfrag{s-q.}{\scriptsize $s\!-\!q$}
\epsfig{file=./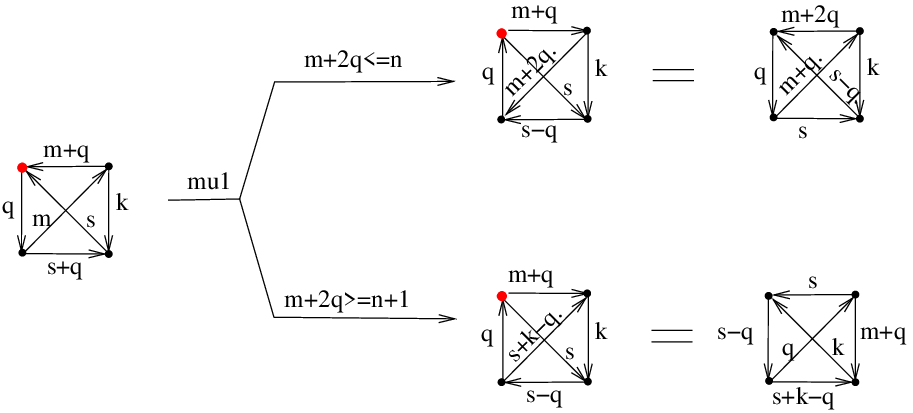,width=0.99\linewidth}
\caption{First mutation.  }
\label{mu_1}
\end{figure}

\smallskip
\noindent
{\it \underline{Case 1a}: $m+2q\le n$.} 
As follows from Fig.~\ref{mu_1}, the result of this mutation is still a quiver having the standard form shown in   Fig.~\ref{standard} with the  new values of labels 
$$
k'=k, \qquad  q'=q, \qquad s'=s-q, \qquad m'=m+q.
$$

We now need to check properties (1)--(4) for $k',q',s',m'$ (using the ones for $k,q,s,m$). We denote by (1)', (2)' etc the corresponding conditions for the mutated quiver.
 
The properties (1)'--(3)' for $k',q',s',m'$ follow immediately from the ones for  $k,q,s,m$. 

Now, we need to check (4)'. First, $s',m'>0$  (otherwise $s=q$, so $m+2q=m+s+q=2n+1-k> n$ which contradicts the assumption of the Case 1a). Next, we rewrite (4)' for $k',q',s',m'$ in terms of the old values: 
$$q\le s-q,\ m+q\le n-q$$
and prove these four inequalities. 

It is clear that $q\le m+q$ and $s-q\le n-q$. The inequality $m+q\le n-q$ also holds as  $m+2q\le n$  by the assumption of Case 1a. Finally, to prove  $q\le s-q$, assume the contrary, i.e. $s-q<q$, and hence $s<2q$. This implies $s+m<2q+m\le n$  (again, by the assumption of Case 1a), i.e. $s+m<n$. However, (1) and (3) imply that $s+m=2n+1-(k+q)=n$ or $n+1$, so we come to a  contradiction.

\smallskip
\noindent
{\it \underline{Case 1b}: $m+2q\ge n+1$.} 
The  new values of the labels are
$$
k'=m+q, \qquad  q'=s-q, \qquad s'=k, \qquad m'=q.
$$

Now, we verify properties (1)'--(4)' for  $k',q',s',m'$:

\begin{itemize} 
\item[(1)':] $k'+q'=m+s=2n+1-(k+q)$, and hence is equal to either $n$ or $n+1$. 

\item[(2)':] We need to check that $m+q>\frac{n}{2}\ge s-q$.
The first of these inequalities follows from 
$$m+q=m+2q-q\ge n+1-q\ge n+1-\frac{n}{2}=\frac{n}{2}+1,$$
while the second one follows from 
$$s-q=(s+m)-(m+q)\le n+1-(\frac{n}{2}+1)=\frac{n}{2}.  $$

\item[(3)':] $s'+m'+q'+k'=m+s+q+k=2n+1.$

\item[(4)':] First, $s',m'>0$ as $q>0$ in view of the assumption $m+2q\ge n+1$ and property (4) for $k,q,s,m$. 
Next, we  check that
$$ s-q\le k,q \le n-(s-q) $$
as follows:
\begin{itemize}
\item[($\alpha$)]
As shown in the proof of (2)', $s-q\le \frac{n}{2}$. Thus, $s-q\le\frac{n}{2}\le k$. 
\item[($\beta)$]
If $s-q>q$ then $s>2q$, which implies $m+s>m+2q\ge n+1$ in contradiction to (3). Hence, $s-q\le q$.
\item[($\gamma$)]
To show $k\le n-(s-q)$ consider two cases: $k=n-q$ and $k=n+1-q$ (one of them holds by (1)). 

If $k=n-q$ then $k+2q= n+q$ and hence $k+s\le k+2q = n+q$ (as $s\le 2q$ in view of part ($\beta$) above), which implies $k\le n-(s-q)$. 

If $k=n+1-q$ then $s< 2q$ (otherwise, $m+s\ge m+2q\ge n+1$ by the assumption of the case 1b, this would  imply  $k+q=n$  in contradiction to the assumption $k=n+1-q)$). Therefore, $k+s<k+2q= n+1+q$. This means $k<n-(s-q)+1$, and thus $k\le n-(s-q)$ as required.

\item[($\delta$)] The inequality $q\le n-(s-q)$ follows from $(\gamma)$ and $q\le k$. 

\end{itemize}
\end{itemize}

\medskip
\noindent
{\bf 2. Mutation in $Q_2$.} Now, we need to check the mutation $\mu_2(Q)$.
We follow the same scheme as before: consider two cases as shown in Fig.~\ref{mu_2} (again, we apply  Corollary~\ref{cor3} and the assumption that 
$s+m+k+q =2n+1$ to compute the new weights of some arrows).
Notice also that we obtain a weight $k-m$ (rather than  $m-k$) as 
otherwise $m>k$ would by (1) imply $m+q>k+q\ge n$, and hence $m>n-q$ which contradicts (4). 

\begin{figure}[!h]
\psfrag{k}{\normalsize  $k$}
\psfrag{q}{\normalsize $q$}
\psfrag{m}{\normalsize  $m$}
\psfrag{s}{\normalsize $s$}
\psfrag{s+q}{\small  $s+q$}
\psfrag{m+q}{\small  $m+q$}
\psfrag{m+q.}{\scriptsize  $m\!+\!q$}
\psfrag{k-m}{\normalsize $k\!-\!m$}
\psfrag{k-m.}{\scriptsize $k\!-\!m$}
\psfrag{2m+q>=n+1}{\normalsize  $2m+q\ge n+1$}
\psfrag{2m+q<=n}{\normalsize $2m+q\le n$}
\psfrag{mu2}{\normalsize  $\mu_2$}
\psfrag{s+k-q}{\small $s\!+\!k\!-\!q$}
\psfrag{s+k-q.}{\scriptsize $s\!+\!k\!-\!q$}
\psfrag{2m+q}{\small  $2m+q$}
\psfrag{m+2q.}{\scriptsize  $m+2q$}
\psfrag{s+k-m}{\small $s\!+\!k\!-\!m$}
\psfrag{s-q.}{\scriptsize $s\!-\!q$}
\epsfig{file=./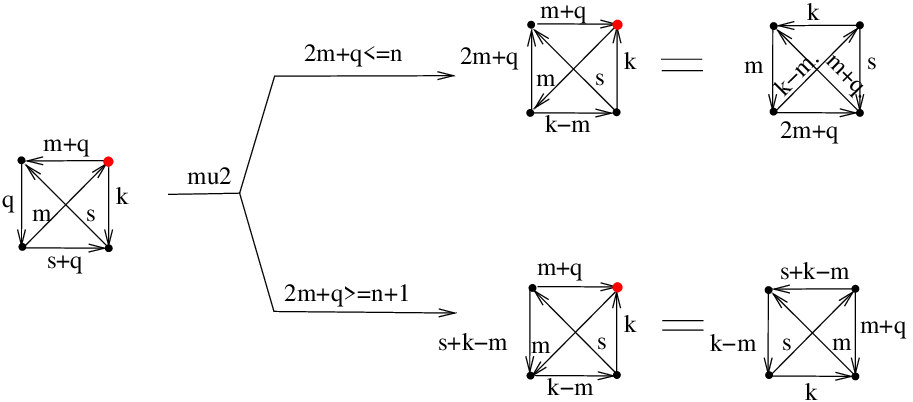,width=0.99\linewidth}
\caption{Second mutation.  }
\label{mu_2}
\end{figure}

\smallskip
\noindent
{\it \underline{Case 2a}: $2m+q\le n$.} 
The new weights of arrows in the standard form are 
$$k'=s, \qquad q'=m, \qquad s'=m+q, \qquad m'=k-m.$$

The conditions (1)'--(4)' are verified as follows:

\begin{itemize}
\item[(1)'] $k'+q'=s+m=n+1-(k+q)$ equals either $n$ or $n+1$ as it should.

\item[(2)'] We need to show $s>\frac{n}{2}\ge m$.

We start with the latter by noticing that the assumption of Case 2a implies $2m\le n-q$, and hence $2m\le n$, i.e. $m\le \frac{n}{2}$.  

Now, $s+m\ge n$ (see the proof of (1)'), so we see that $s\ge n-m\ge \frac{n}{2}$ (where the second inequality makes use of $m\le \frac{n}{2}$ shown above). If $s> \frac{n}{2}$ we are done, otherwise, $s=\frac{n}{2}$ which implies $m=\frac{n}{2}$ and $q=0$ 
(as $2m\le n-q$). Hence, $k=n$, which by (3) means $s+m=n+1$ in contradiction to $s+m=\frac{n}{2}+\frac{n}{2}=n$.

\item[(3)'] $s'+m'=k+q$ equals either $n$ or $n+1$ by (1).
\item[(4)'] The conditions $s',m'>0$ hold as $s'=m+q\ge m>0$ by (4) and $m'=k-m>0$ as $k>n/2$ by (1) and $m\le n/2$ by (2)'.
Another set of inequalities rewrites as 
$$ m\le m+q,k-m\le n-m$$
and can be proved as follows:

\begin{itemize}
\item[($\alpha$)] Clearly, $m\le m+q$.
\item[($\beta$)] By the assumption of Case 2a, $2m+q\le n$, hence, $m+q\le n-m$.
\item[($\gamma$)] $k-m\le n-m$ as $k\le n$.
\item[($\delta$)] We need to show $m\le k-m$ which is equivalent to $2m\le k$. Suppose the contrary, i.e. $k<2m$, then 
applying (1) and the assumption of Case 2a we have
$n\le k+q< 2m+q\le n$  which is impossible.
\end{itemize}

\end{itemize}

\smallskip
\noindent
{\it \underline{Case 2b}: $2m+q\ge n+1$.} 
The new weights of arrows in the standard form are 
$$k'=m+q, \qquad q'=k-m, \qquad s'=m, \qquad m'=s.$$

The computations in this case are a bit more involved than before:

\begin{itemize}
\item[(1)'] $k'+q'=k+q$, hence, it is still equal to either $n$ or $n+1$.

\item[(2)'] We need to show $m+q>\frac{n}{2}\ge k-m$. 

The first of these inequalities is obtained  (using the assumption of Case~2b) as follows:
$$m+q=\frac{2m+2q}{2}\ge \frac{2m+q}{2}\ge \frac{n+1}{2}>\frac{n}{2}. $$

To prove the second inequality, we apply  the assumption of Case~2b again and  compute
\begin{multline*}
\phantom{asdasdasd} k-m\le k-\frac{n+1-q}{2}=k+\frac{q}{2} -\frac{n+1}{2}=\\=k+q -\frac{n+1}{2}- \frac{q}{2}\le n+1 -\frac{n+1}{2}- \frac{q}{2} =\frac{n}{2}+\frac{1}{2}-\frac{q}{2}, 
\end{multline*}
which gives the required inequality if $q>0$.

If $q=0$ then by (1) we have $k=n$. Also, the assumption of Case~2b then reads as $m\ge \frac{n+1}{2}$. Therefore,
$$k-m\le n- \frac{n+1}{2}=  \frac{n-1}{2}<  \frac{n}{2},$$  
as required.

\item[(3)'] $(m+q)+(k-m)+m+s=k+q+m+s=2n+1$ as required.
\item[(4)'] The inequalities $s',m'>0$ hold as $s'=m>0$, $m'=s>0$.
The inequalities 
$$ k-m\le m,s \le n-k+m$$
can be checked as follows:

\begin{itemize}
\item[($\alpha$)] $m\le n-k+m$ as $k\le n$.
\item[($\beta$)] $k-m\le m$ as otherwise $k>2m$  which (together with the assumption of Case~2b) implies $k+q>2m+q\ge n+1$ which contradicts (1).

\item[($\gamma$)] $k-m\le s$ as otherwise $s<k-m$, implying $s+m<k$; by (1) and (3) this means $n\le s+m<k$, i.e. $n<k$ which is impossible.  
\item[($\delta$)] To show $s\le n-k+m$ assume the contrary, i.e. $k> m+n-s$, then
$$\phantom{asdasdaaa} k+q>m+n+q-s=(2m+q)-m+n-s\ge n+1-(m+s)+n,  $$
which implies that $k+q+m+s>2n+1$ in contradiction to (3).

\end{itemize}

\end{itemize}

As no of the two  mutations takes the quiver away from the (finite) set of quivers having the standard form described by Fig.~\ref{standard} and conditions (1)--(4), we conclude that the mutation class is finite.

\end{proof}


Using very similar computations we prove the following lemma.

\begin{lemma}
\label{even families}
The quivers shown on the left and in the middle  of Fig.~\ref{ser}  are mutation-finite for every $n \in\{2, 3, . . . \}$.
\end{lemma}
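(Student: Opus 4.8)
The plan is to mirror the structure of the proof of Lemma~\ref{odd family} almost verbatim, treating the two even-denominator series (left and middle of Fig.~\ref{ser}) by the same inductive device. For each of the two series I would introduce a \emph{standard form} analogous to Fig.~\ref{standard}, in which every quiver in the mutation class is drawn on four vertices with arrow-weights recorded by integer labels $k,q,m,s\in\{0,1,\dots,n\}$ (here $d=2n$), and I would isolate an explicit list of numerical conditions (1)--(4) on these labels — the even-denominator analogues of the conditions in Lemma~\ref{odd family}, adjusted so that the constraint $s+m+k+q=2n+1$ becomes the appropriate $2n$-version and the ranges in (2) and (4) are shifted accordingly. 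The key claim, as before, is that the set of quivers satisfying these conditions is finite for each fixed $n$ and is closed under mutation; mutation-finiteness is then immediate.

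First I would verify the \textbf{base of induction}: redraw the left (resp.\ middle) quiver of Fig.~\ref{ser} in the standard form and read off the initial values of $k,q,m,s$, checking they satisfy (1)--(4). Next I would exploit the same symmetry reduction used in Lemma~\ref{odd family}: passing to $Q\op$ and renumbering vertices by the permutation $(14)(23)$ swaps the roles of $m$ and $s$ and preserves the defining conditions, so it suffices to check the two mutations $\mu_1$ and $\mu_2$ rather than all four. For each of these two mutations I would split into two subcases according to whether the relevant auxiliary sum (the analogue of $m+2q\le n$ versus $m+2q\ge n+1$, and $2m+q\le n$ versus $2m+q\ge n+1$) lands below or above the threshold, compute the new labels $k',q',s',m'$ using Corollary~\ref{cor3}(4)--(5) together with the relation $s+m+k+q=2n$, and verify conditions (1)'--(4)' for the mutated labels by the same elementary inequality manipulations.

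The only genuine work, as in the odd case, is the \textbf{inequality bookkeeping} in verifying (4)' in the harder subcases (the analogues of Cases 1b and 2b), where one must rule out boundary degeneracies by contradiction arguments of the form ``if $s-q>q$ then $s>2q$, forcing $m+s$ past the allowed bound.'' I expect the main obstacle to be getting the even-denominator thresholds exactly right: because $d=2n$ rather than $2n+1$, the parity arguments and the tie-breaking at $s=\tfrac{n}{2}$ or $k=\tfrac{n}{2}$ behave slightly differently, and one must be careful that weights such as $k-m$ (rather than $m-k$) are non-negative and that the double-arrow of weight $2$ never appears in an acyclic position (which would violate Corollary~\ref{cor3}(3)). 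Since the computations are structurally identical to those already carried out in full detail for the odd family, I would present the base case and the new standard-form conditions explicitly, state the two-mutation reduction, and then remark that the verification of (1)'--(4)' in each subcase proceeds exactly as in the proof of Lemma~\ref{odd family}, exhibiting only the one or two inequalities whose proof genuinely differs in the even case.
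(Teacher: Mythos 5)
Your proposal follows essentially the paper's own route: the paper proves Lemma~\ref{even families} by reusing the standard form of Fig.~\ref{standard} with a marginally adjusted set of conditions (recorded in Table~\ref{conditions}), which is exactly the induction-plus-symmetry-reduction scheme you describe. The one detail worth flagging is that the adjusted condition (1) differs between the two even series — $k+q\in\{n-1,n+1\}$ for the left quiver of Fig.~\ref{ser} and $k+q=n$ for the middle one, reflecting that for even denominator $2n$ the quivers split into \emph{two} distinct mutation classes for each $n$ — which is precisely the threshold bookkeeping you anticipated as the main obstacle.
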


To prove this lemma we use exactly the same standard form of the quivers (see Fig.~\ref{standard}) together with a marginal variation of the set of conditions (see Table~\ref{conditions}). These variations  (as well as different shapes of quivers) are due to different parity of the denominator: there is one mutation class for every $n\ge 2$ with odd denominator $2n+1$,  while in the case of the even denominator $2n$ the set of quivers splits into  two mutation classes for every $n$ (see also Proposition~\ref{ser:classes}).  

\begin{center}
\begin{table}
\caption{Conditions for three families}
\label{conditions}
\epsfig{file=./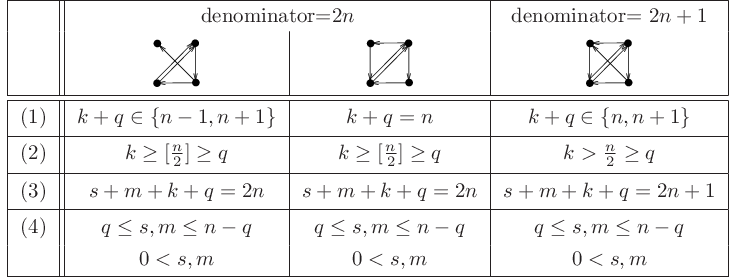,width=0.8\linewidth}
\end{table}
\end{center}

\begin{prop}
\label{ser:classes}
If $Q$ is  a quiver in the standard form (as in Fig.~\ref{standard}) satisfying the conditions as in Table~\ref{conditions},
then $Q$ is mutation-finite.
Moreover, two such quivers belong to the same mutation class if and only if they have the  same denominator and satisfy the same set of conditions.

\end{prop}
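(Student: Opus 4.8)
The plan is to establish Proposition~\ref{ser:classes} in two parts, mirroring the structure already developed for the odd-denominator case in Lemma~\ref{odd family}. For the mutation-finiteness claim, I would proceed exactly as in the proof of Lemma~\ref{odd family}: show by induction on the number of mutations that the standard form of Fig.~\ref{standard}, subject to the appropriate column of Table~\ref{conditions}, is preserved under mutation. As noted in Lemma~\ref{even families}, the bookkeeping is ``very similar'' to the odd case; the only genuine differences are the arithmetic replacements $2n+1 \rightsquigarrow 2n$ in condition (3) and the modified ranges in conditions (1)--(2). I would again exploit the $Q \mapsto Q\op$ symmetry under the permutation $(14)(23)$ (which swaps $m \leftrightarrow s$ and $m+q \leftrightarrow s+q$) to reduce the verification to mutations $\mu_1$ and $\mu_2$, and then split each into the two subcases governed by the sign of $m+2q-n$ (resp.\ $2m+q-n$), recomputing the weights via Corollary~\ref{cor3}(4)--(5). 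Since for each fixed $n$ the conditions of Table~\ref{conditions} describe a finite set of quivers, closure under mutation immediately yields finiteness of every such mutation class.

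The second and more substantial assertion is the classification statement: two quivers in standard form lie in the same mutation class if and only if they share the same denominator and the same column of Table~\ref{conditions}. The ``only if'' direction is essentially automatic: the denominator is a mutation invariant (it is the highest $d$ appearing, by the \emph{Definition} preceding Lemma~\ref{no markov}), and since the even-denominator conditions split into the two columns labelled $\widetilde G_{2,2n}^{(*,+)}$ and $\widetilde G_{2,2n}^{(*,*)}$, I must show these two families never meet. For the ``if'' direction, I would argue that within a single column every standard-form quiver is mutation-equivalent to the base quiver of Fig.~\ref{ser}, which follows from connectedness of the mutation graph on the finite set cut out by the conditions — concretely, by exhibiting that from any admissible $(k,q,s,m)$ one can reach the base values ($q=0$, $k=m=n$, $s=1$ in the odd case, with the analogous bases in the even cases) through a sequence of the mutations $\mu_1,\mu_2$ analyzed above. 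The natural device is a monovariant: track a quantity such as $q$ (or $s-q$) that the subcase analysis shows can be strictly decreased until one of the degenerate configurations of Case~1a/Case~2a is forced, at which point the quiver visibly coincides with a base representative.

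The main obstacle I anticipate is the separation of the two even-denominator families $\widetilde G_{2,2n}^{(*,+)}$ and $\widetilde G_{2,2n}^{(*,*)}$, i.e.\ proving they are genuinely distinct mutation classes rather than two descriptions of one. Condition (1) distinguishes them — $k+q \in \{n-1,n+1\}$ versus $k+q = n$ — so the cleanest route is to verify that $k+q \bmod 2$, or more robustly the residue $k+q - n \pmod{\text{?}}$, is a mutation invariant within the even-denominator standard form. I would check directly from the four weight-update formulas (Cases 1a, 1b, 2a, 2b, with the Table~\ref{conditions} arithmetic) that the value of $k+q$ either is preserved or moves within $\{n-1,n+1\}$ without ever hitting $n$ in the $(*,+)$ family, and symmetrically stays at $n$ in the $(*,*)$ family. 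If a clean modular invariant does not present itself, the fallback is a parity argument on the number of weight-$2$ (double) arrows combined with the orbifold/skew-symmetrizability obstruction already invoked in the denominator~$4$ discussion, which forbids certain cycles and thereby keeps the two families apart.

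Once these pieces are assembled, the proposition follows: finiteness from the inductive closure, the invariance of denominator and of the Table~\ref{conditions} column giving ``only if'', and connectivity of each admissible set to its base representative giving ``if''.
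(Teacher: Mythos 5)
Your finiteness argument and your ``only if'' direction coincide with the paper's. Once each column of Table~\ref{conditions} is shown to be closed under mutation (Lemmas~\ref{odd family} and~\ref{even families}), the ``only if'' part is indeed automatic: for a given $n$ the three families are pairwise disjoint (condition (1) alone separates $k+q\in\{n-1,n+1\}$ from $k+q=n$), and a mutation-closed set containing $Q$ cannot meet a disjoint mutation-closed set containing $Q'$. Your proposed extra verification that $k+q$ is a mutation invariant, and the fallback parity/skew-symmetrizability argument, are therefore unnecessary.

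The gap is in the ``if'' direction. You propose to prove connectivity of each family to its base quiver by a monovariant, driving $q$ (or $s-q$) down until a degenerate configuration is reached, ``at which point the quiver visibly coincides with a base representative.'' That last step fails: $q=0$ does not characterize the base. In the odd case, for $n\ge 3$ the tuple $(k,q,s,m)=(n,0,2,n-1)$ satisfies all of conditions (1)--(4) and has $q=0$, yet it is not the base $(n,0,1,n)$, not even up to the $s\leftrightarrow m$ symmetry. Moreover, none of the update rules is monotone in $q$: Case~1a gives $q'=q$, Case~1b gives $q'=s-q$, Case~2a gives $q'=m$, Case~2b gives $q'=k-m$, so the proposed quantity can stall, jump, or oscillate, and a direct reachability proof would need substantially more combinatorial work. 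The paper avoids connectivity altogether: by Lemma~\ref{thm: ser} (the necessity statement of Section~\ref{sec-high}; cf.\ Remark~\ref{necessary}), every mutation-finite rank~$4$ quiver whose mutation class has highest denominator $d>5$ is mutation-equivalent, up to taking the opposite quiver, to one of the three quivers of Fig.~\ref{ser}. Any quiver in standard form satisfying the conditions is mutation-finite by your part~1 and has high denominator, so it is mutation-equivalent to one of the three base quivers; since its family is mutation-closed and disjoint from the other two families, the base it reaches must be the one of its own family. Hence any two quivers with the same denominator and the same set of conditions are mutation-equivalent. Replacing your monovariant sketch by this appeal to Lemma~\ref{thm: ser} closes the gap using a result you already have at your disposal.
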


\begin{proof}
From the proof of Lemmas~\ref{odd family} and~\ref{even families} we see that 
applying mutations to any quiver $Q$ represented in the standard form and satisfying one of the three versions of the conditions we always obtain quivers of the same family. As each family is finite for any given $n$, this shows mutation-finiteness of $Q$.   

We are left to discuss which quivers belong to the same class. It is clear that quivers with different denominators (or with the same even denominator but different sets of conditions) belong to different mutation classes.
On the other hand, by  Theorem~\ref{thm: ser} every mutation-finite high denominator quiver is mutation equivalent to one of the quivers in 
 Fig.~\ref{ser}.  So, quivers with the same invariants (i.e. the same denominator and the same set of conditions) are mutation-equivalent, while quivers with different invariants are not.

\end{proof}

This concludes the proof of Theorem~\ref{intro1}.

\section{Geometric realization for finite mutation classes}
\label{sec real}
In this section we will show  that every non-integer mutation-finite mutation class (except, possibly, for ones of the orbifold type) admits a geometric realization by reflections in some positive semi-definite quadratic space $V$.
%
%
%
%
This will 
allow us to define the finite, affine and extended affine types of quivers. 

\subsection{Definitions and results}

First, we recall the necessary details from~\cite{FT-rk3,roots}.
\begin{definition}
Let $B$ be an $n\times n$ skew-symmetric matrix corresponding to a quiver $Q$, and let $V$ be a real quadratic space. 
We say that a tuple of vectors ${\bf v}=(v_1,\dots,v_n)$ is a {\it geometric realization} of $Q$ if the following conditions hold:
\begin{itemize}
\item[(1)] $(v_i,v_i)=2$ for $i=1,\dots,n$, $|(v_i,v_j)|=|b_{ij}|$ for $1\le i<j\le n$;
\item[(2)] if $Q_{i_1,i_2,i_3}$ is a cycle, then the number of pairs $(j,k)$ such that  $(v_{i_j},v_{i_k})>0$ is even if $Q_{i_1,i_2,i_3}$ is acyclic and odd if $Q_{i_1,i_2,i_3}$ is cyclic. 
\end{itemize}
A {\em mutation} $\mu_k$ of ${\bf v}$ is defined by partial reflection:
$$
\mu_k(v_j)=\begin{cases}  
v_j-(v_j,v_k)v_k & \text{if  $b_{jk}>0$,}\\ 
-v_k & \text{if $j=k$,}\\
v_j&  \text{otherwise.}\end{cases}
$$
We say that $\bf v$ provides a {\it realization by reflections} of the mutation class of $Q$ if the mutations of 
$\bf v$ agree with the mutations of $Q$, i.e. if conditions $(1)$--$(2)$ are satisfied after every sequence of mutations.
\end{definition}
We recall that every acyclic integer quiver admits a realization by reflections~\cite{S2,ST}.  
Following~\cite{S}, we give the following definition.

\begin{definition}
A geometric realization of a quiver $Q$ by vectors ${\bf v}=\{v_1,\dots,v_n\}$ is {\it admissible} if for every chordless oriented cycle  $Q_{i_1},\dots, Q_{i_s}$ of $Q$ the number of positive scalar products $(v_{i_j},v_{i_{j+1}})$ is odd,
while in every chordless non-oriented cycle such a number is even. A geometric realization of a mutation class is admissible if the realization of every quiver is admissible.

\end{definition}

We will start by showing that every non-orbifold finite mutation class of non-integer quivers has a representative possessing an admissible geometric realization.

\begin{lemma}
\label{initial}
Every quiver shown in Table~\ref{table mut-fin} has an admissible geometric realization.  

\end{lemma}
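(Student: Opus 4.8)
The plan is to produce, for each quiver $Q$ listed in Table~\ref{table mut-fin}, an explicit tuple ${\bf v}=(v_1,\dots,v_n)$ and to check the three requirements directly: the normalization $(v_i,v_i)=2$ with $|(v_i,v_j)|$ equal to the prescribed weight, the cycle-parity condition~(2) in the definition of a geometric realization, and the admissibility condition on chordless cycles. Since every weight has the form $2\cos\frac{\pi m}{d}$, each $Q$ is the skew-symmetrization of a Coxeter diagram, and the natural candidates for the $v_i$ are the mirror normals (simple roots) of the associated reflection group -- of type $H_3$, $H_4$, $F_4$, dihedral, or one of their affine/extended-affine extensions. Concretely, for each $Q$ I would fix a symmetric Gram matrix $G$ with $G_{ii}=2$ and $|G_{ij}|$ equal to the weight of $Q_{ij}$, take $V$ to be $\R^n$ equipped with $G$ (or its non-degenerate quotient), and let $v_i$ be the corresponding vectors; then condition~(1) holds by construction.

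The signs of the off-diagonal entries of $G$ are chosen to encode the orientation. For those $Q$ whose underlying graph contains no cycle -- in particular the linear $F$- and $H$-type quivers and the linear affine quivers $\widetilde F_4,\widetilde H_4$ -- there are no chordless cycles, so conditions~(2) and admissibility are vacuous and I may take every off-diagonal entry negative (the standard Coxeter sign convention). For the quivers carrying cycles -- the three infinite series of Fig.~\ref{ser}, the affine $\widetilde H_3,\widetilde H_3',\widetilde G_{2,n}$, and the extended-affine exceptionals $F_4^{(*,+)},F_4^{(*,*)},H_3^{(1,1)},H_4^{(1,1)}$ -- I would assign signs cycle by cycle so that each chordless oriented cycle receives an odd number of positive products $(v_{i_j},v_{i_{j+1}})$ and each non-oriented chordless cycle an even number. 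This matches the geometry of Corollary~\ref{cor3} (a cyclic rank-$3$ subquiver corresponds to an obtuse triangle, hence exactly one positive product; an acyclic one to an acute triangle, hence none). One then verifies that these local prescriptions are mutually consistent, i.e. that the sign assignment is well defined on overlapping cycles; for the small exceptional diagrams this is a finite check, and for the three families it follows from their uniform description in Fig.~\ref{ser}.

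It remains to verify that $G$ is positive semi-definite of the claimed corank: $0$ for the finite-type row, $1$ for the affine row, and $\ge 2$ for the extended-affine row. For the finite types this is the classical positive-definiteness of the Coxeter forms of $H_3,H_4,F_4$; for the ``primed'' variants, where some $\frac{1}{5}$-labels are replaced by $\frac{2}{5}$-labels (that is, $2\cos\frac{\pi}{5}$ by $2\cos\frac{2\pi}{5}$), I would confirm positive-definiteness by a direct determinant computation, these being Galois-type modifications of the $H$-forms. For the affine families the corank-$1$ null direction is produced by a weight-$2$ arrow: if $(v_i,v_j)=-2$ then $v_i+v_j$ has norm $2+2-4=0$ and lies in the radical, and one checks there is no further degeneracy. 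For the extended-affine quivers I would exhibit two independent radical vectors, or compute $\dim\ker G\ge 2$ directly, again producing null directions from the weight-$2$ arrows together with the extra symmetry of the diagrams.

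The main obstacle is handling the infinite series $\widetilde G_{2,n}$, $\widetilde G_{2,2n}^{(*,+)}$, $\widetilde G_{2,2n}^{(*,*)}$, $\widetilde G_{2,2n+1}^{(*,*)}$ uniformly in $n$: I must give the vectors (equivalently, the Gram matrix and its radical) in closed form in $n$ and show that the corank equals exactly $1$, respectively is at least $2$, for every $n\ge 2$, while simultaneously keeping the sign/orientation bookkeeping consistent. The finite exceptional list, by contrast, reduces to a bounded number of explicit $3\times 3$, $4\times 4$, $5\times 5$ and $6\times 6$ verifications, each of which is routine once the sign pattern is fixed.
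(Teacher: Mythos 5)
Your plan agrees with the paper's proof on the easy half of the table, and only there. For the quivers that are acyclic as directed graphs, taking every off-diagonal Gram entry equal to minus the corresponding weight is exactly the paper's construction: every chordless cycle of such a quiver is non-oriented and carries zero positive products, so admissibility is immediate. Note also that your positive semi-definiteness and corank verifications are not needed for this lemma: the definition of an (admissible) geometric realization places no signature restriction on $V$, and the coranks only enter later, in Theorem~\ref{realisation}.

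The genuine gap sits exactly where you yourself locate your ``main obstacle'', and the proposal does not resolve it. For the quivers that are not acyclic (the rank-$3$ family $\widetilde G_{2,n}$, the three rank-$4$ series of Fig.~\ref{ser}, and the extended-affine exceptionals) you propose to assign signs ``cycle by cycle'' and then ``verify that these local prescriptions are mutually consistent'' --- a finite check for the exceptional diagrams, and for the infinite series an assertion that consistency ``follows from their uniform description''. But the existence of a globally consistent sign assignment is precisely the content of the lemma, and no general principle supplies it: Remark~\ref{rem x5} exhibits a mutation-finite quiver (from a punctured annulus) admitting no admissible realization at all, so consistency can genuinely fail and must be exhibited. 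The paper dispatches all non-acyclic cases at once by a structural observation your proposal lacks: every quiver in Table~\ref{table mut-fin} is either of rank $3$, or acyclic, or contains a double arrow, say $Q_{1n}$, such that deleting either endpoint leaves an acyclic quiver. In the last case one realizes the acyclic quiver on $v_1,\dots,v_{n-1}$ with all-negative products and then sets $(v_n,v_n)=2$, $(v_n,v_i)=(v_1,v_i)$ for $i<n$, so that $v_n$ is a copy of $v_1$ modulo the radical; admissibility follows at once, and the formula is uniform in $n$, which is exactly what removes your obstacle for $\widetilde G_{2,2n}^{(*,+)}$, $\widetilde G_{2,2n}^{(*,*)}$ and $\widetilde G_{2,2n+1}^{(*,*)}$. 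The remaining non-acyclic case, the rank-$3$ family $\widetilde G_{2,n}$, is mutation-acyclic and is covered by the rank-$3$ results of~\cite{FT-rk3}. Until you either find such a reduction or write down explicit sign patterns for all the cyclic quivers (in closed form in $n$ for the three series) and verify the chordless-cycle parities, your argument remains a programme rather than a proof.
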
  

\begin{proof}
Every quiver listed in Table~\ref{table mut-fin} is of one of the following three types: 
\begin{itemize}
\item[-] either the rank is $3$ (and then it is $\t G_{2,n}$ or $H_3$);
\item[-] or it is acyclic;
\item[-] or it has a double arrow, and by removing either end of the double arrow we obtain an acyclic quiver (the two acyclic quivers are the same up to one source/sink mutation). 
\end{itemize}

Quiver  $\t G_{2,n}$ is mutation-acyclic of rank $3$, and thus has an admissible realization by~\cite{FT-rk3}.
 
For an acyclic quiver $Q$, we define inner product on vectors $v_1,\dots,v_n$ by $(v_i,v_i)=2$, $(v_i,v_j)=-w_{ij}$, where $w_{ij}$ is the weight of the arrow $Q_{ij}$. Clearly, ${\bf v}=\{v_1,\dots, v_n\}$ is an admissible realization of $Q$. 

For the last type of quivers, assume that the ends of the double arrow are $Q_1$ and $Q_n$. We take the acyclic subquiver $Q'$ obtained by removing $Q_n$, define inner product on vectors $v_1,\dots,v_{n-1}$ for the acyclic subquiver $Q'$ as described above, and then define $(v_n,v_n)=2$, $(v_n,v_i)=(v_1,v_i)$ for $i<n$. Then  ${\bf v}=\{v_1,\dots, v_n\}$ is an admissible realization of $Q$. 
\end{proof}  

\begin{remark}
\label{rem x5}  
The condition that $Q$ is not of orbifold type is necessary for Lemma~\ref{initial}: it is easy to check that
already  the  surface quiver shown in Fig.~\ref{x5}(a) has no admissible geometric realization.
This quiver defines a triangulation of a once punctured annulus, another representative of the same mutation class is shown in Fig.~\ref{x5}(b).



  
\end{remark}


\begin{figure}[!h]
\psfrag{a}{\small (a)}
\psfrag{b}{\small (b)}
\epsfig{file=./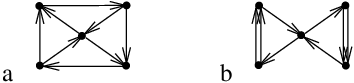,width=0.55\linewidth}
\caption{The surface quiver (a) has no admissible geometric realization.}
\label{x5}
\end{figure}


\begin{theorem}
\label{realisation}  
Let $Q$ be a  real  
mutation-finite quiver of rank higher than $3$ not originating from an orbifold. Then the mutation class of $Q$ admits a geometric realization by reflections in a positive or semi-positive quadratic space $V$. In particular, the quadratic form has the kernel of dimension 
\begin{itemize}
\item[-] zero for quivers in the top row of Table~\ref{table mut-fin};
\item[-] one for quivers in the middle row of  Table~\ref{table mut-fin}; 
\item[-] two for quivers in the bottom row of  Table~\ref{table mut-fin}.
\end{itemize}
\end{theorem}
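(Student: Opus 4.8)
The plan is to establish the geometric realization by reflections for each non-orbifold finite mutation class through a bootstrapping argument: start from the explicit admissible realization of a single representative (provided by Lemma~\ref{initial}) and show that the mutation operation, modeled by partial reflection, preserves the defining conditions $(1)$--$(2)$ of a geometric realization throughout the entire mutation class. Concretely, I would first fix a representative $Q$ of the mutation class together with the admissible realization $\mathbf{v}=\{v_1,\dots,v_n\}$ constructed in Lemma~\ref{initial}. Since the mutation classes in Table~\ref{table mut-fin} are finite (by Tables~\ref{table mut-fin} and~\ref{table sizes mut-fin}, and the classification culminating in Theorem~\ref{intro1}), it suffices to verify that each elementary mutation $\mu_k$, acting as a partial reflection on $\mathbf{v}$, carries an admissible realization of any quiver in the class to an admissible realization of its mutation. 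The key algebraic point is that the partial reflection preserves the form $(v_i,v_i)=2$, and transforms scalar products $(v_i,v_j)$ exactly according to the quiver mutation rule for the weights $|b_{ij}|$; the sign conditions in $(2)$ and the admissibility conditions on chordless cycles must then be checked to be compatible with the combinatorial mutation of arrow directions.

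The main structural step is to verify condition~$(2)$ and admissibility are propagated. I would argue that the partial reflection $\mu_k$, which reflects precisely those $v_j$ with $b_{jk}>0$ (i.e. arrows pointing towards $Q_k$) and fixes the rest, reproduces on the level of scalar products the effect of quiver mutation at $Q_k$: the new scalar product $(\mu_k(v_i),\mu_k(v_j))$ for a pair with $b_{ik},b_{jk}>0$ acquires the cross term $(v_i,v_k)(v_j,v_k)$, matching the change of weight in a mutated triangle, while the sign of the product tracks the reversal/creation of arrows. Using Corollary~\ref{cor3}, every rank~$3$ subquiver of a mutation-finite quiver corresponds to a spherical or Euclidean triangle, so the local geometry is rigid: the scalar products in any triangle are forced by the weights together with the cyclic/acyclic type, which is exactly what conditions~$(1)$--$(2)$ encode. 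Thus the partial reflection and the combinatorial mutation agree on every triangle, and since admissibility is a condition on chordless cycles only, it is enough to control triangles and inductively extend to longer chordless cycles by the parity of positive scalar products along them, which the reflection manipulates in a controlled way.

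Having established that $\mathbf{v}$ provides a realization by reflections, I would then identify the signature of the quadratic form $V$ to obtain the dimension of the kernel. For this I would compute the Gram matrix of the initial admissible representative in each row of Table~\ref{table mut-fin} and determine its corank, noting that the corank is a mutation invariant because each $\mu_k$ acts by an invertible linear map (a partial reflection composed with sign changes is invertible), hence preserves the rank of the form. For the finite-type quivers in the top row the Gram matrices are those of the finite Coxeter systems $H_3,H_4,F_4$, which are positive definite, giving kernel dimension zero. For the affine-type quivers in the middle row one checks the corresponding Gram matrix is positive semi-definite of corank~$1$ (the standard affine/Euclidean situation), and for the extended affine quivers in the bottom row the corank is~$2$. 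These signature computations reduce to evaluating determinants of explicit small matrices with entries $2\cos(\pi m/d)$ for $d\in\{4,5\}$ and the $\widetilde G_{2,n}$ parameter, together with the two-parameter families.

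The hard part will be the case analysis tying the partial-reflection rule to the combinatorial mutation rule uniformly across all quiver types in the table, particularly the quivers possessing a double arrow, where the realization of Lemma~\ref{initial} is defined asymmetrically (by removing one end $Q_n$ and setting $(v_n,v_i)=(v_1,v_i)$). One must check that mutating at or near the double arrow still yields an admissible realization, since the double arrow corresponds to a degenerate (parabolic) rank~$2$ subspace where $(v_1,v_n)=\pm 2$ and $v_1\mp v_n$ lies in the kernel; verifying that partial reflections interact correctly with this degenerate direction, and that admissibility survives mutations that alter the parity of positive scalar products along cycles through the double arrow, is the delicate point. I expect that the rigidity supplied by Corollary~\ref{cor3} — forcing every rank~$3$ subquiver to be a genuine spherical or Euclidean triangle — together with the finiteness of each mutation class (so that the verification is a finite, if lengthy, check guided by the standard forms of Fig.~\ref{standard} and the series in Fig.~\ref{ser}) is exactly what makes the argument go through.
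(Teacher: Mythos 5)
Your overall strategy coincides with the paper's: bootstrap from the admissible realization of Lemma~\ref{initial}, propagate it through the mutation class by partial reflections, and read the kernel dimension off the initial construction. However, the propagation step, which is the entire content of the theorem, has a genuine gap as you present it. Your claim that triangle rigidity (Corollary~\ref{cor3}) plus parity bookkeeping lets you ``inductively extend to longer chordless cycles'' in a ``controlled way'' is precisely the assertion that needs proof, and it is not a formal consequence of local geometry: condition (2) only constrains parities of signs on each cycle, and overlapping cycles can be locally consistent but globally inconsistent. Indeed, Remark~\ref{rem x5} exhibits a mutation-finite quiver (from a punctured annulus) all of whose rank~$3$ subquivers satisfy the constraints of Corollary~\ref{cor3}, yet which admits no admissible realization at all; Remark~\ref{unpunctured} extends this to all non-acyclic punctured orbifold classes. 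So preservation of admissibility under mutation genuinely must be verified, and the paper does this in two different ways: for the exceptional classes (finitely many, but large, e.g.\ the class of $\widetilde H_4$ has $524$ quivers by Table~\ref{table sizes mut-fin}) it is a computer check that the Gram matrix, mutated by the rules of~\cite{BGZ}, remains an admissible realization, using~\cite[Proposition 3.2]{BGZ} to convert admissibility before mutation into a realization after mutation.

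The more serious gap concerns the three rank~$4$ series of Fig.~\ref{ser}. You write that ``the finiteness of each mutation class'' makes the verification ``a finite, if lengthy, check,'' but these series comprise \emph{infinitely many} mutation classes whose sizes grow with $n$, so no finite enumeration (by hand or by computer) can cover them; this is exactly why the paper proves Lemma~\ref{series-geom}, a uniform-in-$n$ induction over the standard form of Fig.~\ref{standard}. That induction contains real work absent from your sketch: one labels each arrow by the sign of the corresponding scalar product, computes how the labels transform under the mutations $\mu_1$ and $\mu_2$ (the other two follow from the opposite-quiver symmetry), and, in the even-denominator case, separately catalogues the quivers with vanishing arrows (Lemma~\ref{empty arrow}) and checks the parity condition on chordless cycles of length~$4$ --- the sign of a scalar product across a vanishing arrow cannot be read from the quiver, which is why these cases need special treatment (Figs.~\ref{signs} and~\ref{signs-even}). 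You correctly flagged the double-arrow realization as delicate, but this vanishing-arrow/length-$4$-cycle analysis is the actual crux. Finally, your corank argument is correct but more elaborate than necessary: all vectors live in one fixed quadratic space $V$, so the kernel of the form is simply that of the space constructed in Lemma~\ref{initial}; no mutation-invariance of Gram-matrix rank needs to be invoked.
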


\begin{proof}
In Lemma~\ref{initial} we have constructed geometric realizations for representatives of required mutation classes, so we only need to show that these geometric realizations can be extended to the whole mutation classes.
For rank $3$ mutation classes we know the result from~\cite{FT-rk3}.  
For the three series in rank $4$ this will be done in Section~\ref{geom for ser}. Other mutation classes  are treated case-by-case.

The case-by-case check is done via a code which verifies that the realization of the initial quiver propagates as a realization of the whole mutation class. The algorithm is the following: we apply a mutation to a quiver and the  partial reflections to the corresponding set of vectors (i.e., mutate the Gram matrix according to the rules prescribed in~\cite{BGZ}), and then verify that the mutated Gram matrix provides an admissible realization of the mutated quiver. Notice that the mutated Gram matrix only  depends on the initial Gram matrix and the directions of arrows in the corresponding quiver before the mutation, but does not depend on the actual vectors $v_1,\dots, v_n$. The code checks that in each of the (non-serial) finite mutation classes the pair (Gram matrix; exchange matrix) takes only finitely many values and the entries in the Gram matrix and the exchange matrix agree, i.e. $|(v_i,v_j)|=|b_{ij}|$ for $i\ne j$.

The dimension of the kernel can be easily seen from the initial construction in Lemma~\ref{initial}.   
\end{proof}

\begin{remark}
\label{unpunctured}
It follows from Remark~\ref{rem x5} that already the mutation class of a quiver originating from a punctured annulus (see Fig.~\ref{x5}) does not have an admissible realization by reflections.
This implies that most non-acyclic mutation classes of punctured surfaces or orbifolds do not possess admissible realizations by reflections.

Geometric realizations by reflections of all mutation classes of quivers originating from unpuctured surfaces were constructed in~\cite{FeLSTu}. There is a strong evidence for the following conjecture. 
\end{remark}

\begin{conjecture}
Every mutation class originating from an unpunctured orbifold admits a realization by reflections.

\end{conjecture}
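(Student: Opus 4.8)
The plan is to imitate the proof of Theorem~\ref{realisation} -- fix a convenient triangulation, build an admissible realization on its adjacency quiver, and verify that flips act by the partial reflections $\mu_k$ -- but to replace the finite case-by-case verification (unavailable here, since orbifold mutation classes form infinite families) by a structural argument based on the surface itself. The reason for restricting to the unpunctured case is already visible in Remark~\ref{unpunctured}: the obstruction to admissibility is manufactured by a loop encircling a puncture (the once-punctured annulus), and no such loop exists when there are no punctures. Since an admissible realization that is preserved under all flips is in particular a realization by reflections, it suffices to produce and propagate an admissible one.

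First I would use the block decomposition of Fomin--Shapiro--Thurston~\cite{FST} and its orbifold refinement~\cite{FeSTu3}: the adjacency quiver of any triangulated unpunctured surface or orbifold is glued, along single vertices, from a short list of elementary blocks, the non-integer weights $2\cos(\pi/d)$ entering only through the blocks carrying a cone point of order $d$. On each block I would install the local realization already at our disposal -- the rank~$3$ data of~\cite{FT-rk3} for a triangle block, and the dihedral or $\t G_{2,d}$ data realized in Lemma~\ref{initial} for a cone-point block -- and then glue. Because blocks meet in single vertices, conditions~(1)--(2) in the definition of a geometric realization are local, so the assembled Gram matrix is a genuine admissible realization of the chosen quiver.

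The core of the argument is propagation. I would show that flipping an arc is exactly the partial reflection $\mu_k$ on the associated vectors, exploiting that a flip is supported on the two triangles adjacent to the flipped arc; this reduces the sign bookkeeping to the rank~$3$ computation of~\cite{FT-rk3}, carried out block-by-block. The only global quantity one must then control is the parity of the number of positive scalar products around each chordless cycle, which is precisely what admissibility demands.

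The main obstacle I expect is exactly this global sign-parity. A single flip is harmless, but showing that \emph{every} flip sequence preserves admissibility for all cycles simultaneously amounts to proving that the relevant sign assignment carries no monodromy. I would try to encode the signs as a $1$-cochain on the surface and prove that it is a coboundary in the unpunctured case, the class of a puncture being exactly the nonzero obstruction witnessed in Remark~\ref{unpunctured}. Turning this cohomological vanishing into a statement uniform over all triangulations of a fixed unpunctured surface -- so that it persists under every flip rather than just on the initial triangulation -- is the delicate point, and is presumably why the statement is left as a conjecture rather than proved as a theorem.
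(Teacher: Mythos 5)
The statement you are trying to prove is left as an open \emph{conjecture} in the paper: the authors offer no proof of it, only the statement that there is ``strong evidence'' for it, together with Remark~\ref{unpunctured} explaining why the punctured case fails. So there is no proof in the paper to compare yours against, and your proposal itself is not a proof either --- it is a research plan, and you concede as much in your final paragraph. Two of its steps are genuine gaps, not mere technicalities.

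First, the gluing step is wrong as stated. You claim that because blocks of the Fomin--Shapiro--Thurston decomposition meet in single vertices, conditions (1)--(2) of a geometric realization, and admissibility, are ``local'' and can be checked block-by-block. But condition (2) and the admissibility condition are constraints on cycles --- and chordless cycles in a block-decomposable quiver routinely traverse many blocks (e.g.\ the cycles coming from the boundary components of an annulus, exactly the configuration in Fig.~\ref{x5}). So installing local data on each block and gluing does not by itself yield an admissible realization of the assembled quiver; the parity condition around long cycles is a global constraint that your construction never addresses. Second, the propagation step --- that every flip sequence preserves admissibility, i.e.\ that your proposed $1$-cochain of signs has vanishing monodromy uniformly over all triangulations --- is precisely the content of the conjecture, and you leave it unproven. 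In effect your argument reduces the conjecture to itself: what you call ``the delicate point'' is not a loose end but the entire difficulty, which is why the paper's authors, who did carry out such sign-propagation arguments in full for the rank~$4$ series (Lemma~\ref{series-geom}), stopped short of claiming this statement as a theorem.
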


\subsection{Geometric realizations for rank $4$ series}
\label{geom for ser}

In rank $4$ we have infinitely many finite mutation classes whose sizes are not uniformly bounded, so we are not able to apply a computer verification. We start with proving the following technical lemma.

\begin{lemma}
\label{empty arrow}
Let $Q$ be a quiver mutation-equivalent to one of the quivers in Fig.~\ref{ser}.
If $Q$ has a vanishing arrow then $Q$ is one of the quivers shown in Fig.~\ref{f_empty_arrows}.

\end{lemma}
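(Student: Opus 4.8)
The plan is to reduce everything to the standard form and then run a short, completely explicit case analysis. By Proposition~\ref{ser:classes} (together with Lemmas~\ref{odd family} and~\ref{even families}), any $Q$ mutation-equivalent to a quiver of Fig.~\ref{ser} can be drawn in the standard form of Fig.~\ref{standard} with parameters $k,q,s,m$ satisfying the conditions of Table~\ref{conditions} for the appropriate denominator $d$. In this presentation the six arrows carry the labels $k,q,m,s,m+q,s+q$, and the arrow labelled $\ell$ has weight $2\cos(\ell\pi/d)$. Hence an arrow \emph{vanishes} precisely when its label $\ell$ satisfies $2\cos(\ell\pi/d)=0$, i.e. $\ell=d/2$. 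First I would record this reformulation: a vanishing arrow exists if and only if one of $k,q,m,s,m+q,s+q$ equals $d/2$.

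The odd case is then immediate. If $d=2n+1$, condition (4) gives $m,s\le n-q$, so $m+q\le n$ and $s+q\le n$; together with $q\le[n/2]<n$ and $k\le n$ (a label never exceeds $d/2$ by Corollary~\ref{cor3}(1)), every label is at most $n<d/2$. Thus no arrow can vanish, and the odd series contributes nothing to the statement. I would therefore assume from now on that $d=2n$ is even, where a vanishing arrow means that some label equals exactly $n$.

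It remains to enumerate, for the two even families, which labels can attain the value $n$ under conditions (1)--(4), and to read off the resulting quiver in each case. The label $q$ is excluded at once, since (2) forces $q\le[n/2]<n$. The possibilities $m=n$ or $s=n$ force $q=0$ by (4), after which (1) and (3) pin down the remaining labels; the possibilities $k=n$, $m+q=n$ and $s+q=n$ are analysed similarly, each time using (1) and (3) to solve for the parameters not fixed by the vanishing condition. Throughout I would exploit the symmetry already used in the proof of Lemma~\ref{odd family}: passing to $Q\op$ and relabelling vertices by $(14)(23)$ interchanges $s\leftrightarrow m$ and $s+q\leftrightarrow m+q$ while preserving the conditions, so the cases $s=n$ and $s+q=n$ reduce to $m=n$ and $m+q=n$. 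For each surviving configuration one substitutes the parameter values back into Fig.~\ref{standard}, computes the six weights via Corollary~\ref{cor3}, and checks that the quiver obtained coincides with one of those listed in Fig.~\ref{f_empty_arrows}; note that several configurations survive only as $n$-dependent families (which is why that figure displays parametrized quivers) and that distinct cases may yield the same quiver.

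The main obstacle is the bookkeeping rather than any single hard step: one must ensure the enumeration is genuinely exhaustive across both the $(*,+)$ and the $(*,*)$ families, correctly handle configurations in which two labels vanish simultaneously, and translate each admissible tuple $(k,q,s,m)$ into the drawn quiver accurately. Care is also needed to discard the spurious configurations ruled out by the positivity requirement $0<s,m$ in condition (4), and to confirm that each resulting quiver still lies in the original mutation class, which is automatic since it is a specialization of the standard form.
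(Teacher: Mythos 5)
Your proposal is correct and follows essentially the same route as the paper's proof: reduce to the standard form of Fig.~\ref{standard} with the constraints of Table~\ref{conditions}, observe that an arrow vanishes exactly when its label equals $d/2$ (so only the two even-denominator families can contribute, and $q=n$ is excluded by condition (2)), and then enumerate which of $k,m,s,m+q,s+q$ can equal $n$ using conditions (1), (3), (4) together with the $Q\op$-symmetry swapping $s\leftrightarrow m$. The paper organizes the same case analysis in a slightly different order (starting from $k=n$, which forces all other arrows to be non-vanishing), but the content is identical.
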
  

\begin{proof}
$Q$ can have a vanishing arrow in the only case when the highest denominator of $Q$ is even, and hence $Q$ is mutation-equivalent to the quiver on the left or in the middle of  Fig.~\ref{ser}. For each of these quivers (considered in the standard form) we check  which arrows can vanish  (we use the conditions shown in Table~\ref{conditions} for that; an arrow marked $x/2n$ vanishes if and only if $x=n$). In particular, condition (2) implies that $q\ne n$.

Further, if $k=n$ then the conditions imply that no other arrow vanish, and moreover, the quiver is as on Fig.~\ref{f_empty_arrows}(a) or (e). If $k\ne n$ we check the case $m=n$ and get the quiver on   Fig.~\ref{f_empty_arrows}(b)  (there are no such quivers in the other mutations class). The same (up to symmetry) will happen if $s=n$. Finally, if  $k,m,s\ne n$ and $m+q=n$ we obtain the quivers in  Fig.~\ref{f_empty_arrows}(c) and (f). If, in addition, we require $s+q=n$, we get the quivers shown in Fig.~\ref{f_empty_arrows}(d) and (g). 
  
\end{proof}

\begin{figure}[!h]
\psfrag{a}{\small (a)}
\psfrag{b}{\small (b)}
\psfrag{c}{\small (c)}
\psfrag{d}{\small (d)}
\psfrag{e}{\small (e)}
\psfrag{f}{\small (f)}
\psfrag{g}{\small (g)}
\psfrag{s<m}{\scriptsize $s<m$}
\psfrag{s+m=n+1}{\scriptsize $s+m=n+1$}
\psfrag{s+m=n}{\scriptsize $s+m=n$}
\psfrag{s+m=n-1}{\scriptsize $s+m=n-1$}
\psfrag{n-1/2}{\scriptsize  $\frac{n-1}{2}$}
\psfrag{n+1/2}{\scriptsize  $\frac{n+1}{2}$}
\psfrag{n/2}{\scriptsize  $\frac{n}{2}$}
\psfrag{n-m+s}{\scriptsize  $n-m+s$}
\psfrag{n-m}{\scriptsize  $n-m$}
\psfrag{n-s}{\scriptsize $n-s$}
\psfrag{1}{\scriptsize  $1$}
\psfrag{n-1}{\scriptsize  $n-1$}
\psfrag{2s}{\scriptsize $2s$}
\psfrag{m}{\scriptsize  $m$}
\psfrag{s}{\scriptsize $s$}
\psfrag{m+1}{\scriptsize  $m+1$}
\psfrag{s+1}{\scriptsize $s+1$}
\epsfig{file=./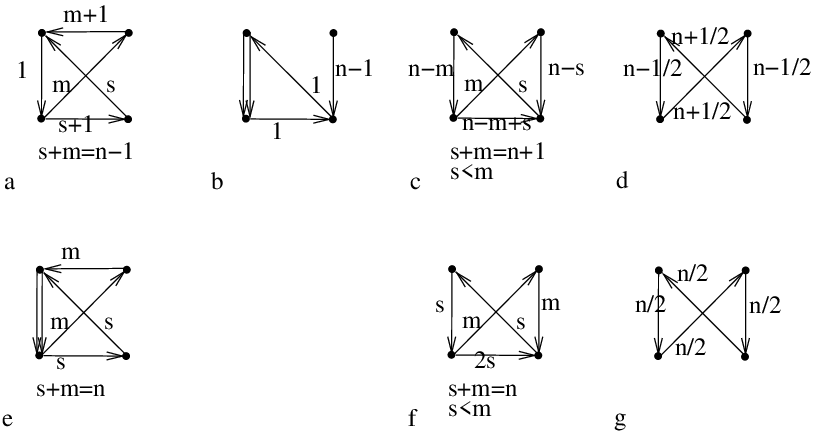,width=0.99\linewidth}
\caption{Quivers with vanishing arrows: (a)--(d) and (e)--(g) belong to two different mutation classes respectively, cf. Table~\ref{conditions}.   }
\label{f_empty_arrows}
\end{figure}

\begin{lemma} 
\label{series-geom}
Let $Q$ be a quiver in its standard form (see Fig.~\ref{standard}) and $\mathbf v\!=\!\{v_1,\dots,v_4\}\!\subset\! V$ its admissible geometric realization.
Then for every $i\in\{1,\dots,4 \}$ the collections of vectors  $ \mu_i(\mathbf v)$ provide geometric realizations of $\mu_i(Q)$.   

\end{lemma}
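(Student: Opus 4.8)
The plan is to verify Lemma~\ref{series-geom} by direct computation, leveraging the fact that everything has already been reduced to a bounded combinatorial problem. The standard form in Fig.~\ref{standard} depends on only four parameters $k,q,m,s$ (with the denominator $d$ fixed), and Lemma~\ref{odd family} together with Lemma~\ref{even families} (as packaged in Table~\ref{conditions}) already tells us the \emph{exact} form of each $\mu_i(Q)$: it is again a quiver in standard form, with explicitly computed new labels $k',q',s',m'$. Thus the task is not to discover the shape of the mutated quiver, but merely to check that the partial reflection $\mu_i$ applied to the Gram matrix of $\mathbf v$ reproduces the Gram matrix dictated by those new labels, and that admissibility (the parity condition on signs around chordless cycles) is preserved.

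First I would fix notation by writing down the $4\times 4$ Gram matrix $G=((v_i,v_j))$ associated to the standard form, reading the off-diagonal entries directly from Fig.~\ref{standard}: the magnitudes are $2\cos(k\pi/d)$, $2\cos(q\pi/d)$, $2\cos(m\pi/d)$, $2\cos(s\pi/d)$, $2\cos((m+q)\pi/d)$, $2\cos((s+q)\pi/d)$ on the six edges, with signs determined by the admissible realization (so that the parity condition holds on each chordless cycle). Since $G$ is determined by the quiver alone (as noted in the proof of Theorem~\ref{realisation}, the mutated Gram matrix depends only on the initial Gram matrix and the arrow directions), I can reduce to a purely matrix-level statement. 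By the symmetry argument already used in the proof of Lemma~\ref{odd family} --- passing to $Q\op$ and relabelling by $(14)(23)$ --- it suffices to treat the two mutations $\mu_1$ and $\mu_2$, each splitting into the two cases (1a)/(1b) and (2a)/(2b) recorded in Figs.~\ref{mu_1} and~\ref{mu_2}.

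For each of these four cases I would carry out the partial reflection explicitly. The mutation rule replaces each $v_j$ with $b_{jk}>0$ by $v_j-(v_j,v_k)v_k$, negates $v_k$, and fixes the rest; the new inner products are then computed via bilinearity, e.g.\ $(\mu_k(v_i),\mu_k(v_j))=(v_i,v_j)-(v_i,v_k)(v_k,v_j)$ when both $i,j$ are reflected. The key identity to invoke is the product-to-sum relation for cosines, $2\cos\alpha\cdot 2\cos\beta = 2\cos(\alpha+\beta)+2\cos(\alpha-\beta)$, which converts the quadratic expressions in the $v$-coordinates into the single cosine weights $2\cos(k'\pi/d)$ etc.\ predicted by the combinatorial mutation. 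This is precisely the computation that Corollary~\ref{cor3}(4)--(5) encodes geometrically (spherical/Euclidean triangles), so the numerology is guaranteed to match; I only need to confirm that the \emph{signs} produced by the reflection agree with the admissible sign pattern of the target quiver, which amounts to tracking how each arrow reverses direction under $\mu_i$ and checking the chordless-cycle parity.

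The main obstacle will be the bookkeeping of signs and directions across the case split, rather than any genuine analytic difficulty: one must confirm in every branch (1a),(1b),(2a),(2b) that the reflected Gram matrix is admissible for the \emph{correctly oriented} standard-form quiver, including the subtlety that a weight appears as $2\cos((s-q)\pi/d)$ rather than $2\cos((q-s)\pi/d)$ (using $s\ge q$ from condition (4), exactly as flagged in the proofs above). Since the parameter ranges from Table~\ref{conditions} guarantee all the relevant angles stay in $[0,\pi]$, no degenerate cosines arise, and each verification reduces to a finite symbolic check. I expect the argument to conclude by noting that the computations for the even-denominator families in the left and middle of Fig.~\ref{ser} are identical up to the marginal variations in Table~\ref{conditions}, so the single computation suffices for all three series.
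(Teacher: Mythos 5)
Your outline reproduces the paper's skeleton (reduction to $\mu_1,\mu_2$ via $Q\op$ and the permutation $(14)(23)$, case split following Figs.~\ref{mu_1} and~\ref{mu_2}, then a sign/parity check for admissibility); the one methodological difference is that you verify the weight-matching by hand with product-to-sum identities, where the paper simply quotes \cite[Proposition 3.2]{BGZ}: admissibility of $\mathbf v$ already guarantees that $\mu_i(\mathbf v)$ is a geometric realization of $\mu_i(Q)$, so only admissibility needs to be propagated. The genuine gap is your treatment of the even-denominator families. You assert that ``no degenerate cosines arise'' and that the even-denominator computations go through ``up to marginal variations''; both claims fail. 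For denominator $2n$, an arrow labelled $n$ has weight $2\cos(\pi/2)=0$, i.e.\ it vanishes, and such quivers genuinely occur in these mutation classes --- the paper has to classify them separately (Lemma~\ref{empty arrow}). Vanishing arrows break your plan in two ways. First, admissibility then becomes a condition on chordless cycles of length $4$ (which exist only when an arrow vanishes), and your triangle-based parity bookkeeping never examines these; the paper must argue that every chordless $4$-cycle arising here is oriented and always receives an odd number of positive inner products. Second, the sign-propagation rule degenerates near a vanishing arrow: the sign of an arrow $Q'_{jl}$ not incident to the mutated vertex is normally recovered from the triangle $Q'_{ijl}$, but this is impossible when one of that triangle's arrows has weight zero, which is why the paper runs the extra case analysis of Fig.~\ref{signs-even}. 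Your four cases (1a), (1b), (2a), (2b) are exactly the odd-denominator mutation pictures; the even-denominator check has additional branches that your enumeration never generates.

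A smaller but substantive error: your reflection formula is misstated. When both $v_i$ and $v_j$ are reflected in $v_k$, one gets $(\mu_k(v_i),\mu_k(v_j))=(v_i,v_j)$, because the two cross terms cancel against $(v_i,v_k)(v_j,v_k)(v_k,v_k)$ with $(v_k,v_k)=2$; the expression $(v_i,v_j)-(v_i,v_k)(v_k,v_j)$ that you quote is the formula for the case when exactly one of the two vectors is reflected. In a proof whose entire content is a finite symbolic check, the check must be run with the correct rule --- with it, the entries left unchanged by the reflection are matched against unchanged quiver weights, and it is the sign analysis, not the trigonometry, that carries the burden of the argument.
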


\begin{proof}
  
The proof is by induction on the number of mutations needed to reach a given quiver $Q$ from the initial quiver $\hat Q$ shown in Fig.~\ref{ser}.
We start with a quiver shown in  Fig.~\ref{ser} and consider its admissible geometric realization $\hat{\bf  v}$ constructed in Lemma~\ref{initial}.
Given a quiver $Q$ in the mutation class and its admissible realization $\bf v$,
we will apply all four possible mutations (mutating the set of vectors using partial reflections)  and check that the mutated set of vectors $\bf v'=\mu(\bf v)$ provides an admissible geometric realization for the mutated quiver $Q'=\mu(Q)$ (note that, as in Lemma~\ref{odd family}, we actually need to check only two mutations, the other two follow from a symmetry of the quiver provided by taking $Q\op$ with a permutation of vertices). 
Since $\bf v$ is an admissible realization for $Q$, we conclude that $\bf v'$ is a geometric realization for $Q'$ (see~\cite[Proposition 3.2]{BGZ}),
and we only need to show that  $\bf v'$ is an {\it admissible} geometric realization for $Q'$.

We start checking the admissibility of $\bf v'$  by considering the case of odd denominator: this will be the easiest case as no quiver in the mutation class has vanishing arrows.

\medskip
\noindent
{\bf Case 1: odd denominators.}
We label an arrow $Q_{ij}$  of $Q$ by ``$+$''  (resp. ``$-$'') if $(v_i,v_j)$ is positive (resp. negative). 

Applying a mutation $\mu_i$ we compute the new sign labels as follows. First, we compute the new labels of all arrows $Q_{ij}'$ incident to $Q_i$: these labels easily follow from the mutation rule (which says  that  either both vectors are reflected in $v_i$ and, hence, the sign is preserved, or only $v_i$ is substituted by its negative, and then the sign changes to the opposite). The label for an arrow $Q_{lj}'$ non-incident to $i$ is computed from the triangle $Q_{ijl}'$: namely, the number of arrows labeled  by ``$+$'' in  $Q_{ijl}'$ should be even if and only if $Q_{ijl}'$ is cyclic by~\cite{FT-rk3}. When all labels are computed, we check the rank $3$ subquiver $Q'\setminus Q_i'$ and see that the labeling is also admissible on this rank $3$ subquiver, see Fig.~\ref{signs}. This implies that $\bf v' $ is an admissible realization for $Q'$ (indeed, in the assumption of odd denominator we only need to check cycles of length $3$).
Notice that as all quivers in the mutation class are ones in the standard form and no arrow vanishes from it, mutations considered in 
 Fig.~\ref{signs} exhaust all possibilities for the case of odd denominator (here, we use the two possible forms of mutated quiver explored in the proof of Lemma~\ref{odd family}, see Figs.~\ref{mu_1} and~\ref{mu_2}).

\begin{figure}[!h]
\psfrag{-}{\color{red}   $\mathbf -$}
\psfrag{+}{\color{red}   $\mathbf +$}
\psfrag{1}{\scriptsize $1$}
\psfrag{2}{\scriptsize $2$}
\psfrag{3}{\scriptsize $3$}
\psfrag{4}{\scriptsize $4$}
\psfrag{mu1}{\small $\mu_1$}
\psfrag{mu2}{\small $\mu_2$}
\epsfig{file=./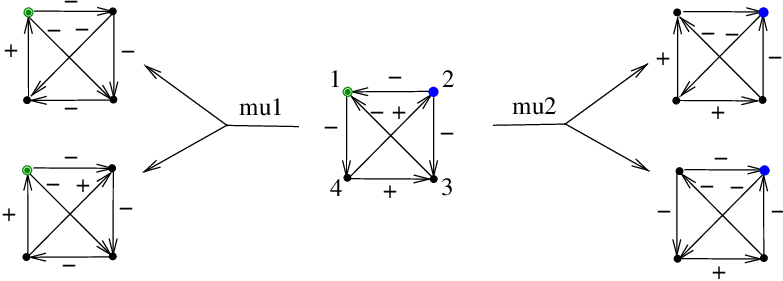,width=0.99\linewidth}
\caption{Mutating signs for odd denominators. For each mutation we consider two cases (depending on the weights of arrows, see Figs.~\ref{mu_1} and~\ref{mu_2}). }
\label{signs}
\end{figure}

\medskip
\noindent
{\bf Case 2: even denominators.}
We follow exactly the same plan as for odd denominator, however, we need to consider the quivers with  some vanishing arrows separately (as in this case we need to take additional care while mutating the sign labels). 

In Lemma~\ref{empty arrow} we  list the quivers with vanishing arrows appearing in the considered mutation classes.
Given a quiver $Q$ from one of the two series, we need to do the following:
\begin{itemize}
\item[(1)] if $Q$ has no vanishing arrows and ${\bf v}$ is an admissible geometric realization of $Q$, then we need to check the admissibility of the realization $\mu({\bf v})$;
\item[(2)] if $Q$ has vanishing arrows, $\bf v$ is an admissible geometric realization of $Q$ and  $Q'=\mu(Q)$, then we need to check whether ${\bf v'}=\mu(\bf v)$ is a geometric realization of $Q'$, and whether it is admissible.  
\end{itemize}
In the first of these checks the condition for cycles of length $3$ is verified by the same computation as before, however, we need to check also cycles of length $4$ (as $\mu(Q)$ may have vanishing arrows). As we can see from the list in Fig.~\ref{f_empty_arrows}, a length $4$ chordless cycle is always oriented (for quivers in these mutation classes). As one can check in Fig.~\ref{signs}, an oriented cycle of length $4$ always receives an odd number of labels ``$+$'', even when this cycle is not a chordless one. This verifies the condition for cycles of length $4$, and hence we can assume that $Q$ has at least one vanishing arrow. 

To complete the second check above, we need to mutate the quiver. As before, we label the arrows of $Q$ with ``$+$'' and ``$-$'' in an admissible way (which exists by the induction assumption). Note that for each of the quivers in the list there is a unique way to choose such a labeling (up to changing signs of some of vectors $v_1,\dots,v_4\in \bf v$). 
We will only need to look at the mutations in the directions of vertices incident to some vanishing arrows (all other mutations are treated in exactly the same way as before).
Also, we do not need to check the mutations with respect to sink or source as they do not change signs in any oriented cycle and change exactly two signs in a non-oriented one. Furthermore, we will use symmetries of quivers (and the symmetry up to taking $Q\op$) to reduce the computations.
This reduces the list of cases to the one in Fig.~\ref{signs-even}.

To compute the signs after mutation $\mu_i$ we do the following. First, we compute the signs of all arrows incident to $Q_i$ as before.
Then, we compute the signs of all arrows $Q'_{jl}$ where both $Q_j$ and $Q_l$ are connected to $Q_i$ by a non-vanishing arrow.
All the other signs remain intact (indeed, if both $Q_j$ and $Q_l$ are not joined to $Q_i$ then $v_j=v'_j$ and $v_l=v'_l$;
if the arrow $Q_{ij}$ vanishes and $Q_{il}$ does not, then  $v_j=v'_j$, while $v_l'$ may either coincide with $v_i$ or computes as $v_l'=v_l-(v_i,v_l)v_i$,  in both cases we have $(v_j',v_l')=(v_j,v_l)$).
The computation shows that $\bf v'$ is an admissible realization of $Q'$. 

\end{proof}  

\begin{figure}[!h]
\psfrag{-}{\color{red} \tiny  $\mathbb -$}
\psfrag{+}{\color{red} \tiny  $\mathbb +$}
\psfrag{1}{\scriptsize $1$}
\psfrag{2}{\scriptsize $2$}
\psfrag{3}{\scriptsize $3$}
\psfrag{4}{\scriptsize $4$}
\psfrag{mu1}{\small $\mu_1$}
\psfrag{mu2}{\small $\mu_2$}
\epsfig{file=./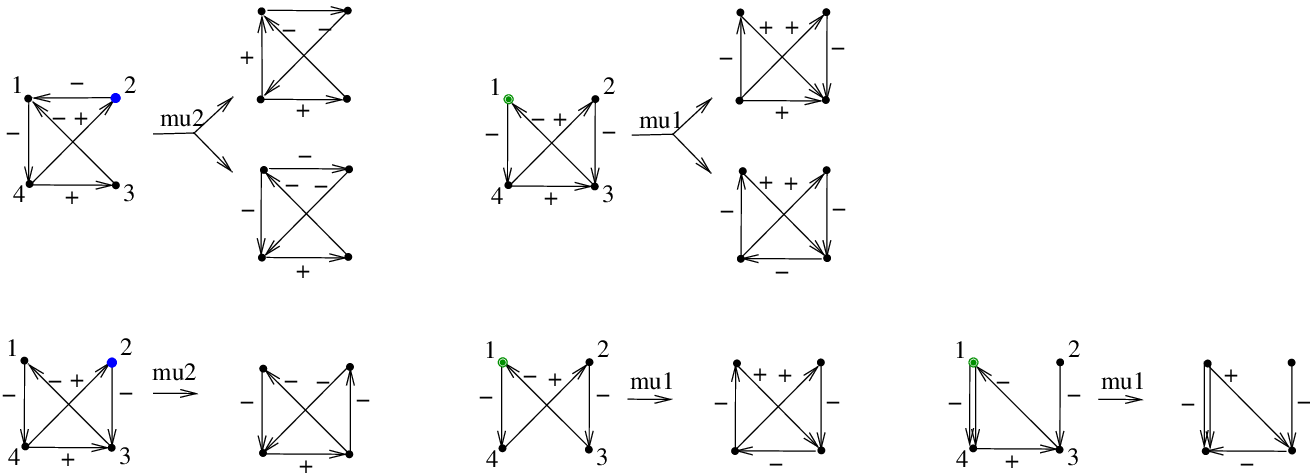,width=0.99\linewidth}
\caption{Mutating signs: additional mutations for even denominators. }
\label{signs-even}
\end{figure}

\begin{remark}
\label{seeds}
Once we have geometric realization of a mutation class by reflections, we can consider the set of all mirrors of reflections obtained by iterated mutations of the initial tuple of vectors. Then it is easy to see that a quiver is of finite type if and only if the corresponding set of mirrors is finite (and coincides with the hyperplane arrangement associated to a finite Coxeter group).   

\end{remark}


\section{Acyclic quivers  and acute-angled simplices }
\label{sec acyclic}
For non-integer quivers of finite or affine type, geometric realization constructed in Lemma~\ref{initial} defines an acute-angled simplex bounded by the mirrors of reflections of the corresponding finite or affine Coxeter group ($B_n$, $F_4$,  $H_{2}$, $H_3$, $H_4$ or $G_2^{n}$ and affine versions of them). 
It is natural to ask the following two questions:

\begin{itemize}
\item[(1)] Given a mutation-finite acyclic quiver, does it always correspond to an acute-angled simplex defined by  the geometric realization (up to a change of signs of some of the vectors $v_i$)? 

\item[(2)] Given an acute-angled simplex defined by some roots of a root system in $V$, does it give rise to a realization of a mutation-finite quiver?   
  
\end{itemize}

Notice that in rank $3$ the answers to both of these questions are positive, see~\cite{FT-rk3}. Moreover, for integer quivers this also holds: in finite (respectively, affine) types $A$, $B$, $C$, $D$, $E$, $F$ and  $G$ every acyclic quiver defines an acute-angled spherical (respectively, Euclidean) Coxeter simplex, and any acyclic orientation of a Coxeter diagram of a Coxeter simplex gives rise to a mutation-finite quiver. 

We will now see that the situation in the general case is more involved.

\subsection{Acute-angled simplices for all acyclic representatives}
The answer to the first question is positive also in the general non-integer case. We have checked it case-by-case (but we have no conceptual proof at the moment).  In Table~\ref{t acyclic} we list all acyclic quivers (up to sink-source mutations) in the mutation classes containing more than one acyclic representative.

    

\begin{center}
\begin{table}
\caption{Acyclic quivers in finite mutation classes containing more than one acyclic representative (up to sink/source mutations)}
\label{t acyclic}
\epsfig{file=./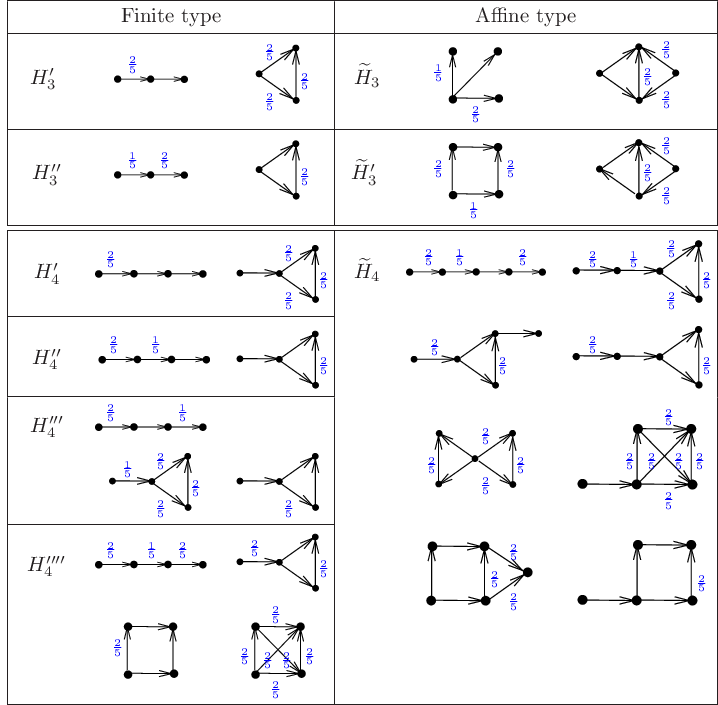,width=0.75\linewidth}  
\end{table}  
\end{center}

\subsection{Not all acute-angled simplices give rise to mutation-finite acyclic quivers}

It turns out that the answer to the second question is negative. 

By a {\em diagram} of a simplex we will mean a counterpart of a Coxeter diagram, i.e. a weighted graph, where vertices correspond to the facets of the simplex, and the weights $m/d$ of the edges denote the dihedral angles $\pi m/d$ (the edges with label $1/2$ are omitted, the edges corresponding to $\pi/3$ are unlabeled). We have written out the complete list of diagrams of acute-angled simplices in root systems
$\widetilde H_3$, $H_4$ and $\widetilde H_4$ and checked that most of these simplices appear as geometric realizations of some mutation-finite acyclic mutation classes. However, there is a number of exceptions: in Table~\ref{acute} we list all (up to sink/source mutations) mutation-infinite acyclic quivers 
appearing as orientations of diagrams of acute-angled simplices in $\widetilde H_3$, $H_4$ and $\widetilde H_4$. 

It is currently not clear to us what distinguishes the acute-angled simplices appearing in Tables~\ref{acute} from  ones defining mutation-finite quivers, and we think it would be an interesting question to understand the source of this difference.

\begin{center}
\begin{table}
\caption{Mutation-infinite acyclic quivers from acute-angled $H$-simplices (up to sink/source mutations)}
\label{acute}
\epsfig{file=./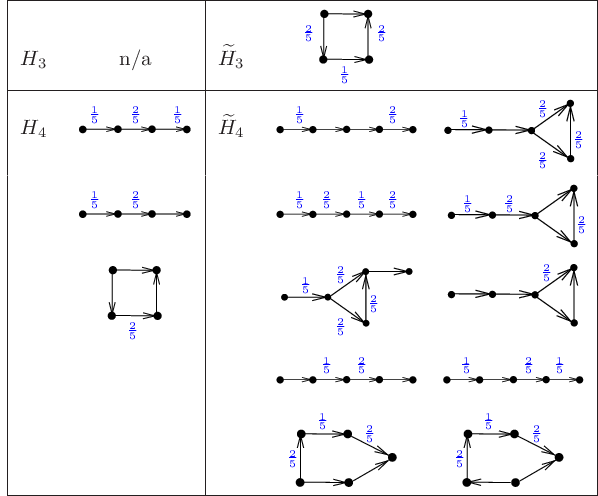,width=0.7\linewidth}
\end{table}
\end{center}

\begin{remark} Finally, we list some observations concerning the acute-angled simplices and the corresponding quivers. 
\begin{itemize}
\item[(a)] Every acute-angled simplex in  $H_3$, $\widetilde H_3$, $H_4$ or $\widetilde H_4$
either is decomposable (i.e., its diagram is disconnected), or is a spherical Coxeter simplex of the type $H_3$ or $H_4$, or has a diagram whose orientation appears either in
Table~\ref{t acyclic} or in Table~\ref{acute} (or in both: two distinct acyclic orientations of the same simplex diagram may not be simultaneously mutation finite/infinite). 

\item[(b)] Notice that when a diagram of a simplex has a cycle of length more than $3$, there are two acyclic orientations of such a diagram up to sink/source mutations. All other diagrams arising from acute-angled simplices have a unique acyclic orientation up to sink/source mutations.



\item[(c)] Some of the $\widetilde H_4$-quivers in   Table~\ref{acute} are mutation-equivalent.
  In particular, this is the case for the two quivers shown in the first, second and the third rows
  (see Fig.~\ref{mut} for the sequences of mutations).

\item[(f)] Acute-angled simplices in finite types are also listed in~\cite{F}.

\item[(g)] The affine extensions of Coxeter groups of type $H$ were described in~\cite{PT}. In particular, the diagram of the simplex giving rise to the top left $\widetilde H_4$ quiver in Table~\ref{acute} was used to define the group $\widetilde H_4$. We note that one can start with any of the simplices whose diagrams are listed in the $\t H_4$ parts of Tables~\ref{t acyclic} and~\ref{acute} to get the same group.

\end{itemize}
  
\end{remark}

\begin{figure}[!h]
  \epsfig{file=./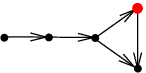,width=0.14\linewidth}\put(-19,25){\tiny\color{blue} $\frac{2}{5}$}\put(-19,0){\tiny\color{blue} $\frac{2}{5}$}\put(-2,10){\tiny\color{blue} $\frac{2}{5}$}\put(-57,20){\tiny\color{blue} $\frac{1}{5}$}
  \qquad
  \raisebox{15pt}{\epsfig{file=./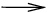,width=0.05\linewidth}}
  \qquad
  \epsfig{file=./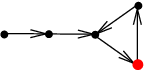,width=0.14\linewidth}\put(-19,25){\tiny\color{blue} $\frac{2}{5}$}\put(-2,10){\tiny\color{blue} $\frac{2}{5}$}\put(-57,20){\tiny\color{blue} $\frac{1}{5}$}
  \qquad
    \raisebox{15pt}{\epsfig{file=./pic/arrow.eps,width=0.05\linewidth}}
  \qquad
  \epsfig{file=./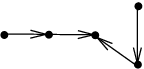,width=0.14\linewidth}\put(-2,10){\tiny\color{blue} $\frac{2}{5}$}\put(-57,20){\tiny\color{blue} $\frac{1}{5}$}

\vspace{15pt}  

  \epsfig{file=./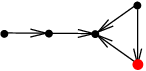,width=0.14\linewidth}\put(-2,10){\tiny\color{blue} $\frac{2}{5}$}\put(-57,20){\tiny\color{blue} $\frac{1}{5}$}\put(-38,20){\tiny\color{blue} $\frac{2}{5}$}
  \qquad
  \raisebox{15pt}{\epsfig{file=./pic/arrow.eps,width=0.05\linewidth}}
  \qquad
  \epsfig{file=./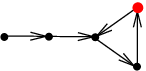,width=0.14\linewidth}\put(-19,25){\tiny\color{blue} $\frac{1}{5}$}\put(-2,10){\tiny\color{blue} $\frac{2}{5}$}\put(-57,20){\tiny\color{blue} $\frac{1}{5}$}\put(-38,20){\tiny\color{blue} $\frac{2}{5}$}
  \qquad
    \raisebox{15pt}{\epsfig{file=./pic/arrow.eps,width=0.05\linewidth}}
  \qquad
  \epsfig{file=./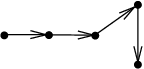,width=0.14\linewidth}\put(-2,10){\tiny\color{blue} $\frac{2}{5}$}\put(-57,20){\tiny\color{blue} $\frac{1}{5}$}\put(-38,20){\tiny\color{blue} $\frac{2}{5}$}\put(-19,25){\tiny\color{blue} $\frac{1}{5}$} 

\vspace{15pt}  
 \epsfig{file=./pic/ac4-1.eps,width=0.14\linewidth}\put(-19,25){\tiny\color{blue} $\frac{2}{5}$}
  \quad
  \raisebox{15pt}{\epsfig{file=./pic/arrow.eps,width=0.05\linewidth}}
  \quad
  \epsfig{file=./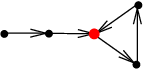,width=0.14\linewidth}\put(-19,25){\tiny\color{blue} $\frac{1}{5}$}\put(-19,0){\tiny\color{blue} $\frac{2}{5}$}  
  \quad
    \raisebox{15pt}{\epsfig{file=./pic/arrow.eps,width=0.05\linewidth}}
  \quad
  \epsfig{file=./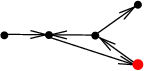,width=0.14\linewidth}\put(-19,25){\tiny\color{blue} $\frac{1}{5}$}\put(-12,11){\tiny\color{blue} $\frac{2}{5}$}\put(-28,-1){\tiny\color{blue} $\frac{2}{5}$} 
  \quad
    \raisebox{15pt}{\epsfig{file=./pic/arrow.eps,width=0.05\linewidth}}
  \quad
   \epsfig{file=./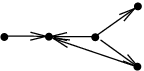,width=0.14\linewidth}\put(-19,25){\tiny\color{blue} $\frac{1}{5}$}\put(-12,11){\tiny\color{blue} $\frac{2}{5}$}\put(-28,-1){\tiny\color{blue} $\frac{2}{5}$}\put(-33,20){\tiny\color{blue} $\frac{2}{5}$}

\caption{Mutation equivalences between exceptional mutation-infinite quivers }
\label{mut}
\end{figure}

\end{document}